\def\newremark#1{\@ifnextchar[{\@orem{#1}}{\@nrem{#1}}}
\def\@nrem#1#2{%
\@ifnextchar[{\@xnrem{#1}{#2}}{\@ynrem{#1}{#2}}}
\def\@xnrem#1#2[#3]{\expandafter\@ifdefinable\csname #1\endcsname
{\@definecounter{#1}\@addtoreset{#1}{#3}%
\expandafter\xdef\csname the#1\endcsname{\expandafter\noexpand
    \csname the#3\endcsname \@remcountersep \@remcounter{#1}}%
\global\@namedef{#1}{\@rem{#1}{#2}}\global\@namedef{end#1}{\@endremark}}}
\def\@ynrem#1#2{\expandafter\@ifdefinable\csname #1\endcsname
{\@definecounter{#1}%
\expandafter\xdef\csname the#1\endcsname{\@remcounter{#1}}%
\global\@namedef{#1}{\@rem{#1}{#2}}\global\@namedef{end#1}{\@endremark}}}
\def\@orem#1[#2]#3{\expandafter\@ifdefinable\csname #1\endcsname
    {\global\@namedef{the#1}{\@nameuse{the#2}}%
\global\@namedef{#1}{\@rem{#2}{#3}}%
\global\@namedef{end#1}{\@endremark}}}
\def\@rem#1#2{\refstepcounter
      {#1}\@ifnextchar[{\@yrem{#1}{#2}}{\@xrem{#1}{#2}}}
\def\@xrem#1#2{\@beginremark{#2}{\csname the#1\endcsname}\ignorespaces}
\def\@yrem#1#2[#3]{\@opargbeginremark{#2}{\csname
         the#1\endcsname}{#3}\ignorespaces}
\def\@remcounter#1{\noexpand\arabic{#1}}
\def\@remcountersep{.}
\def\@beginremark#1#2{\rm \trivlist \item[\hskip \labelsep{\bf #1\ #2}]}
\def\@opargbeginremark#1#2#3{\rm \trivlist
        \item[\hskip \labelsep{\bf #1\ #2\ (#3)}]}
\def\@endremark{\endtrivlist}
\newcommand{\biindice}[3]%
{

\begin{array}[t]{c}
#1\\
{\scriptstyle #2}\\
{\scriptstyle #3}
\end{array}

}
\newtheorem{Thm}{Theorem}[section]
\newtheorem{Cor}[Thm]{Corollary}
\newtheorem{Prop}[Thm]{Proposition}
\newtheorem{Lem}[Thm]{Lemma}
\newtheorem{Def}[Thm]{Definition}
\def\a{\alpha}
\def\b{\beta}
\def\l{\lambda}
\def\p{\partial}
\def\e{\varepsilon}
\def\v{\varphi}
\def\mc{\mathcal}
\def\mf{\mathfrak}
\def\ua{\uparrow}
\def\da{\downarrow}
\newcommand{\R}{\mathbb R}
\newcommand{\N}{\mathbb N}
\newcommand{\Z}{\mathbb Z}
\numberwithin{equation}{section}
\theoremstyle{definition}
\theoremstyle{plain}
\newtheorem{theorem}{Theorem}[section]
\theoremstyle{definition}
\title{\vskip-2.5cm
Multiplicity of subharmonics in a class of periodic predator-prey Volterra models
\thanks{This paper has been written under the auspices of the Ministry of
Science, Technology and Universities of Spain under Research Grant MTM2015-65899-P, and of the IMI of
Complutense University.
}
}
\author{
\sc
Julián L\'opez-G\' omez
\\
\small
Universidad Complutense de Madrid
\\
\small
Instituto de Matem\'{a}tica Interdisciplinar (IMI)
\\
\small
Departamento de An\'alisis Matem\'atico y Matem\'atica Aplicada
\\
\small  Plaza de las Ciencias 3, 28040   Madrid, Spain
\\
\small E-mail: {\tt   julian@mat.ucm.es}
\medskip
\\
\sc
Eduardo Muñoz-Hernández
\\
\small
Universidad Complutense de Madrid
\\
\small
Departamento de An\'alisis Matem\'atico y Matem\'atica Aplicada
\\
\small  Plaza de las Ciencias 3, 28040   Madrid, Spain
\\
\small E-mail: {\tt eduardmu@ucm.es }
}
\date{\today}
\numberwithin{equation}{section}
\begin{document}

\maketitle

\begin{abstract}
\noindent
  This paper ascertains the global topological structure of the set of subharmonics of arbitrary order of the
  periodic predator-prey model introduced in \cite{LOT96}. By constructing
   the iterates of the monodromy operator of the system, it is shown that the system admits subharmonics of all orders for the
  appropriate ranges of values of the parameters. Then, some sharp results of topological nature in the context of global bifurcation theory provide us with the fine topological structure of the components of subharmonics emanating
  from the $T$-periodic coexistence state.
\\
\\
{\it 2010 Mathematics Subject Classification:} 34C25, 34C23, 34A34.
\\
\\
{\it Keywords and Phrases}. Periodic predator-prey model. Subharmonic coexistence states. Structure of the set of bifurcation points.
Global bifurcation diagrams. 
\end{abstract}

%\markright{\today}

\section{Introduction}
In this paper we analyze the global structure of the set of subharmonics of the periodic predator-prey model
\begin{equation}
\label{1.1}
\left\{ \begin{array}{l} u'=\a(t)u(1-v) \\ v'=\b(t)v(-1+u) \end{array} \right.
\end{equation}
where $\a(t)$ and $\b(t)$ are real continuous $T$-periodic functions such that
\begin{equation}
\label{1.2}
  \a= 0\quad \hbox{on}\;\; [\tfrac{T}{2},T], \quad \b = 0 \quad \hbox{on}\;\; [0,\tfrac{T}{2}],
\end{equation}
$\a(t)>0$ if $t \in (0,\tfrac{T}{2})$, and $\b(t)>0$ if $t\in (\tfrac{T}{2},T)$, which entail $\a\b=0$.  This model was introduced by J. L\'opez-G\'omez, R. Ortega and A. Tineo \cite{LOT96} as a simple  example of a predator-prey model with an unstable coexistence state. Later, it was shown in \cite{LG00} that it actually admits three $T$-periodic (non-degenerate) coexistence states: one $T$-periodic and two additional $2T$-periodic solutions. The non-degeneration of these solutions facilitated the construction of some examples of $T$-periodic Lotka--Volterra models
\begin{equation}
\label{1.3}
  \left\{ \begin{array}{l} u'=\l(t) u - a(t) u^2 - b(t) uv  \\ v'=-\mu(t)v +c(t)uv-d(t)v^2 \end{array} \right.
\end{equation}
with at least three coexistence states (see \cite{LG00}). In \eqref{1.3},
$\l, \mu, a, b, c, d$ are smooth positive $T$-periodic functions.
Such multiplicity results contrast very strongly with the main theorem of J. L\'opez-G\'omez and R.  Pardo \cite{LP93}, where it
was established the uniqueness of the coexistence state for the boundary value problem
\begin{equation}
\label{1.4}
  \left\{ \begin{array}{ll} \begin{array}{l} -u''= \l(x) u-a(x)u^2-b(x)uv\\ -v''=-\mu(x) v+c(x)uv-d(x)v^2 \end{array} & \quad \hbox{in}\;\; (0,L), \\
  u(0)=u(L)=v(0)=v(L)=0, & \end{array} \right.
\end{equation}
where $\l, \mu, a, b, c, d$ are  positive (arbitrary) continuous functions in $[0,L]$. Inheriting the same non-cooperative structure, at first glance causes some perplexity that \eqref{1.3} and \eqref{1.4} behave so differently.
The original theorem of \cite{LP93} was later refined in a series of papers by A. Casal et al. \cite{CEL}, E. N. Dancer et al. \cite{DLO} and J. L\'opez-G\'omez and R. Pardo \cite{LP98}.
\par
An important feature of model \eqref{1.1} is that it does not fit within the general setting of  T. Ding and F. Zanolin \cite{DZ96},
where the existence of higher order subharmonics for a general class of predator-prey models was established. Precisely, \cite[Th.3]{DZ96} gives some general conditions on the nonlinearities $f(t,v)$ and $g(t,u)$ so that the Lotka--Volterra predator-prey system
\begin{equation}
\label{1.5}
\left \{
\begin{array}{ll}
u'=uf(t,v)\\
v'=vg(t,u)
\end{array}
\right.
\end{equation}
can admit higher order subharmonics. In \eqref{1.5},  $f(t,v)$ and $g(t,u)$ are continuous functions $T$-periodic in time, $t$, satisfying certain bounds for the existence of T-periodic solutions and such that, for every $t\in [0,T]$, either $v\mapsto f(t,v)$ is (strictly) decreasing, or
$u\mapsto g(t,u)$ is (strictly) increasing. Under these assumptions,  \cite[Th. 3]{DZ96} establishes the existence of an integer $m^*\geq 2$ such that \eqref{1.5} admits, at least, one $mT$-periodic solution for all $m\geq m^*$.
\par
Although setting
\[
  f(t,v):=\a(t)(1-v),\qquad g(t,u):=\b(t)(-1+u),
\]
\eqref{1.1} can be also written down in the form of \eqref{1.5}, by \eqref{1.2}, neither $\a(t)(1-v)$ can be decreasing for all $t\in [0,T]$,  nor $\b(t)(-1+u)$ can be increasing for all $t\in[0,T]$. Thus, \eqref{1.1} remains outside the class of models considered in \cite{DZ96}. In particular,  \cite[Th. 3]{DZ96} cannot be applied to establish the existence of higher order subharmonics for \eqref{1.1}.
\par
The main goal of this paper is to construct the set of all subharmonics of \eqref{1.1} in the special,
but extremely interesting case, when
\begin{equation}
\label{1.6}
  A := \int_0^T \a(t)\,dt = \int_0^T \b(t)\,dt>0.
\end{equation}
Precisely,  it will be shown that, under assumption \eqref{1.6},  the model \eqref{1.1} admits subharmonics of any order for the appropriate range of values of $A>0$, which will be regarded as a bifurcation parameter throughout this paper. Actually, our analysis establishes  the existence of an integer $m^*(A)\geq 1$ such that \eqref{1.1} possesses, at least, \emph{two} subharmonic solutions of order $m$ for all $m\geq m^*(A)$. In particular, \cite[Th. 3]{DZ96} seems to be true in much more general situations than those originally dealt with in \cite{DZ96}. Moreover, as a direct consequence of our analysis,
\begin{equation}
\label{1.7}
  \lim_{A\da 0}m^*(A)=+\infty,\quad \hbox{whereas}\;\; m^*(A)=2 \;\; \hbox{if}\;\; A>2.
\end{equation}
Figure \ref{Fig9} summarizes, at a glance, the main findings of this paper. It is an sketch of the global bifurcation diagram of subharmonics, where we are plotting the value of $A$ in abscisas versus the value of $x=u_0=v_0$ in ordinates. Naturally,
\[
  (u_0,v_0)=(u(0),v(0))
\]
stands for the initial condition of \eqref{1.1}.
\begin{figure}[h!]
		\centering
		\includegraphics[scale=0.72]{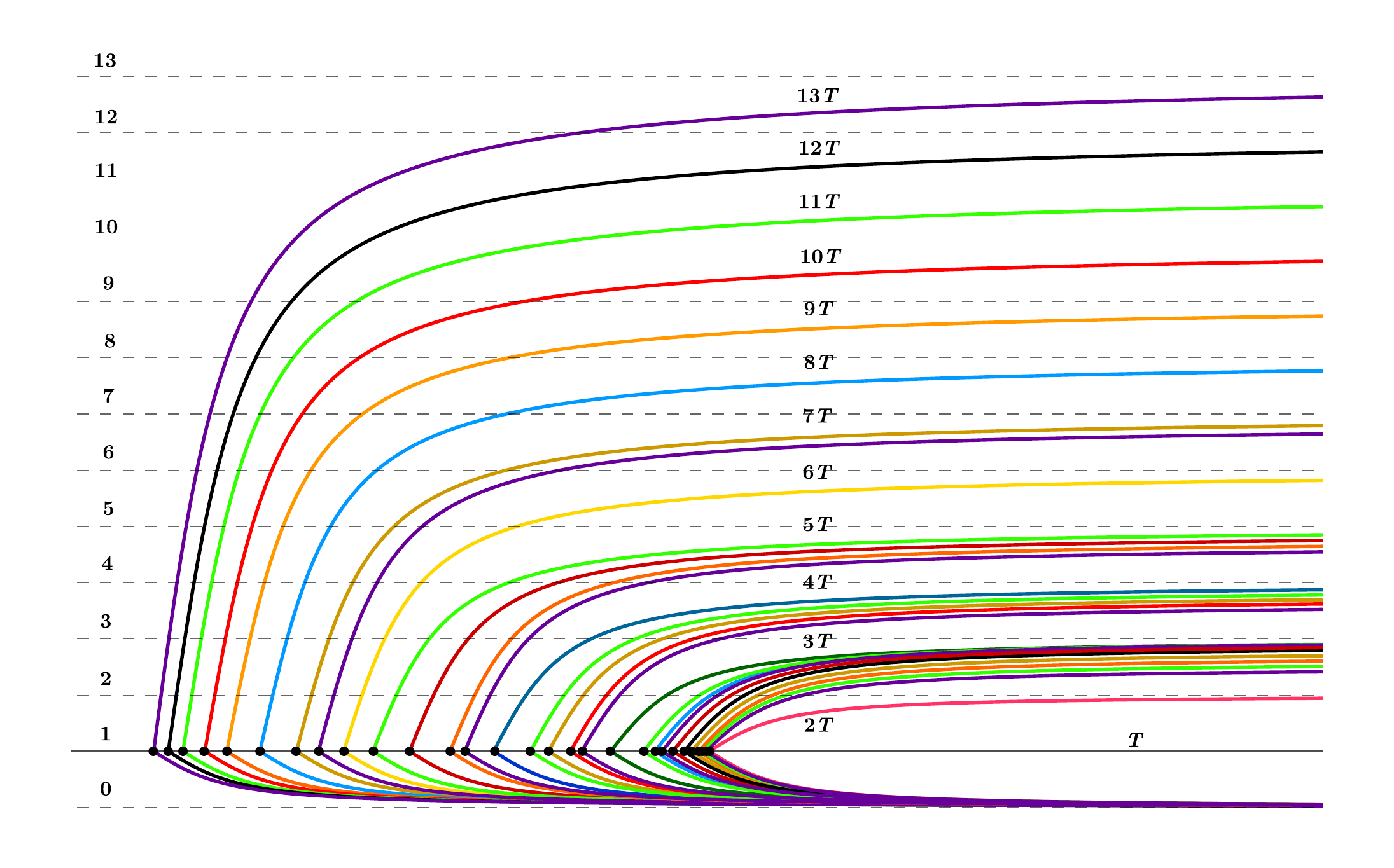}
		\caption{Subharmonics of \eqref{1.1} under condition \eqref{1.6} with $x=u_0=v_0$. }
		\label{Fig9}
\end{figure}
Each of the curves plotted in Figure \ref{Fig9} represents a component of $nT$-periodic coexistence states of \eqref{1.1} for each integer $n\geq 1$. By a component it is meant a closed and connected subset of the solution set of \eqref{1.1} which is maximal for the inclusion. Each point on the corresponding line, $(A,x)$, provides us with a value of $A$
for which \eqref{1.1} admits a $nT$-periodic solution with $u_0=v_0=x$. By the intrinsic nature of \eqref{1.1}, it turns out that all these components are separated from each other. By some existing results of topological nature  in global bifurcation theory, all of them have an unbounded $A$-projection. However, except for the first three, whose local bifurcation diagrams are described by  Theorem \ref{th6.1}, the nature of their local bifurcations from $(A,1)$ is not known yet, being possibly random. The problem of ascertaining weather, or not, this occurs, seems extremely challenging.  Note that
\[
  (A,u,v)=(A,1,1)
\]
solves \eqref{1.1} for all $A>0$. More precisely, Figure \eqref{Fig9} shows all the components of subharmonics of order $n$ of \eqref{1.1} emanating from the straight line $(A,1)$ for $1\leq n\leq 13$. It contains the plots of:
\begin{itemize}
\item  $1$ component of subharmonics of order $1$;
\item $1$ component of subharmonics of order $2$;
\item $1$ component of subharmonics of order $3$;
\item $2$  components of subharmonics of order $4$; one of them
is actually the component of subharmonics of order $2$;
\item $2$  components of subharmonics of order $5$;
\item $3$ components of subharmonics of order $6$; one of them is actually  the component of
subharmonics of order $2$ and another  must be the component of order $3$;
\item $3$ components of subharmonics of order $7$;
\item $4$ components of subharmonics of order $8$; one of them must be the component
of order $2$ and another one is a component of subharmonics with minimal order $4$;
\end{itemize}
and so on... The fact that the number of components of subharmonics of order $n\geq 1$ grows to
$+\infty$ as $n\ua +\infty$ is rather intriguing and it seems inherent to the non-cooperative character of \eqref{1.1} and attributable to the $T$-periodicity of $\a(t)$ and $\b(t)$. The emergence of secondary bifurcations in any of these components cannot be a priori excluded, however no higher order bifurcations have been represented in Figure \ref{Fig9}.
\par
Thanks to Theorems \ref{th4.8} and Theorem \ref{th6.1}, for every $n\geq 1$, the bifurcation points from $(A,1)$ to the $nT$-periodic coexistence states of \eqref{1.1} are given by the positive roots of the polynomial
\begin{equation}
\label{1.8}
	p_n(A):=[2-(-1)^n A]p_{n-1}(A)-p_{n-2}(A),\qquad n\geq 3,
\end{equation}
where
\[
  p_1(A):= 1,\qquad p_2(A):=2-A.
\]
Although, according to  Theorem \ref{th5.2}, for every $n\geq 2$, the positive roots of $p_{2n}(A)$ are separated by the positive roots of $p_{2n-1}(A)$,  the positive roots of $p_{2n+1}(A)$ are separated by those of $p_{2n}(A)$ less than $2$, and, for every $n\geq 1$, the (even) polynomials  $\tfrac{p_{2n}(A)}{2-A}$ and $p_{2n-1}(A)$ have (exactly) $n-1$ positive roots,  which are real and algebraically simple,  the problem of ascertaining the sharp ordering structure, if any,  of the set of all these
positive roots, which is a numerable subset of $(0,2]$, remains an open problem in this paper. Although there are some serious evidences that this set should be dense in the interval $[0,2]$, a rigorous proof of this feature is not available yet.
\par
The fact that  the positive roots of  $p_n(A)$ are algebraically simple allows us to apply the main  theorem of M. G. Crandall and P. H. Rabinowitz \cite{CR71} to prove that each of the components of subharmonics in Figure \ref{Fig9} must be a real analytic curve about  their bifurcation points from $(A,1)$.
\par
The mathematical analysis carried out in this paper has been tremendously facilitated  by the fact that $\a\b=0$, which provides us with a rather explicit formula for the iterates, $\mc{P}_n$, $n\geq 2$, of the
monodromy operator, $\mc{P}_1$. Thanks to Proposition \ref{pr3.2},
for every $ n\geq 2 $, the Poincar\'e map $\mc{P}_n$ can be expressed through
\begin{equation}
\label{1.9}
	(u_n,v_n)=\mc{P}_n(x,x)=\left(xE_{2n-1}(x),xE_{2n}(x)\right),
\end{equation}
where
\begin{equation}
\label{1.10}
\left\{
\begin{array}{lll}
E_0(x):=1,\quad E_1(x):=e^{(1-x)A},  \\[4pt]
E_n(x):=\left\{ \begin{array}{ll} e^{[x(E_1(x)+E_3(x)+\cdots+E_{n-1}(x))-\frac{n}{2}]A} & \quad \mathrm{if\ } \;\; n\in 2\N, \\[4pt]
e^{[\frac{n+1}{2}-x(E_0(x)+E_2(x)+\cdots+E_{n-1}(x))]A} & \quad \mathrm{if\ } \;\; n\in 2\N+1. \end{array} \right.
\end{array}
\right.
\end{equation}
Thanks to Theorem \ref{th3.3}, for every $n\geq 2$, the positive fixed points of $\mc{P}_n$, which provide us with the $nT$-periodic coexistence states of \eqref{1.1}, are given by the zeros of the map
\begin{equation}
\label{1.11}
	\varphi_n(x)= \varphi_{n-1}(x) - 1 + xE_{n-1}(x), \qquad x\in [0,n].
\end{equation}	
Thus,
\begin{align*}
  \v_1(x) & =x-1, \\ \v_2(x) & = \v_1(x)-1+x e^{(1-x)A}, \\
  \v_3(x)  &=  \v_2(x)-1 +xe^{(xe^{(1-x)A}-1)A},\\
  \varphi_4(x) & =\v_3(x)-1+ x e^{(2-x-xe^{(xe^{(1-x)A}-1)A})A},\\
  \varphi_5(x) & = \v_4(x)-1+ x e^{\left(x e^{(1-x)A}+xe^{(2-x-xe^{(xe^{(1-x)A}-1)A})A}-2\right)A}.
\end{align*}
In particular, the positive fixed points of $\mc{P}_5$ are given by the positive zeros of $\v_5(x)$, which consists, essentially, in the composition of $4$ exponentials functions. This circumstance might provoke dramatic oscillations of
$\v_5(x)$ between some consecutive positive zeros. For instance, choosing $A=5$ and $x=0.1$, it turns out that $\v_5(0.1)\sim 10^{30}$, which lies outside  the precision range of most of personal computers. Therefore, without no further work, numerics cannot be of any help in constructing the global bifurcation diagram sketched in Figure \ref{Fig9}.
\par
Lastly, we will consider the associated perturbed $T$-periodic functions
\begin{equation}
\label{1.12}
   \a_\e:=\a+\e,\qquad \b_\e = \b +\e,
\end{equation}
where $\e>0$, as well as the associated predator-prey model
\begin{equation}
\label{1.13}
\left\{ \begin{array}{l} u'=\a_\e(t)u(1-v), \\ v'=\b_\e(t)v(-1+u). \end{array} \right.
\end{equation}
Taking $\e=0$ in \eqref{1.13} gives \eqref{1.1}. Although the $nT$-periodic coexistence states
of \eqref{1.1} might degenerate, thanks to a celebrated result of A. Sard \cite{Sard}, most of the subharmonics of order $n$
of \eqref{1.1} should provide us with subarmonics of order $n$ of \eqref{1.13} for sufficiently small $\e>0$. Therefore, the global topological structure sketched by Figure \ref{Fig9} should be essentially preserved, at least for sufficiently small $\e>0$. Note that \cite[Th. 3]{DZ96} applies to \eqref{1.13} for all $\e>0$,
because $\a_\e(t)>0$ and $\b_\e(t)>0$ for all $t\in [0,T]$. Thus, it is rather natural to conjecture that, actually, Figure
\ref{Fig9} provides us with the minimal admissible  complexity of the set of subharmonics of \eqref{1.13}
for sufficiently small $\e>0$. Like in \cite{LG00}, these multiplicity results should provide us with a series of (very intriguing) multiplicity results for \eqref{1.4}.
\par
The distribution of this paper is the following. Section 2 studies the structure and multiplicity of the low order subharmonics of \eqref{1.1} in the general case when
\[
  0 < A := \int_0^T\a(t)\,dt \neq B := \int_0^T \b(t)\,dt>0.
\]
It substantially sharpens some previous findings of \cite{LG00} by establishing the exact multiplicity of
the $2T$-periodic solutions of \eqref{1.1} when $AB>4$. The rest of the paper focuses attention into the special, but extremely important case, when $A=B$. In Section 3 we construct the Poincar\'e maps $\mc{P}_n$ for all $n\geq 1$. In Section 4 we introduce the associated polynomials
\begin{equation*}
  p_n(A):=  \frac{d\v_n (A,1)}{dx}=\mf{L}(n;A),\qquad A>0,
\end{equation*}
whose positive roots provide us with the bifurcation points to subharmonics from $(A,1)$, and analyze
some of their most fundamental properties. In Section 5 we establish some fundamental separation properties between
the zeros of these polynomials and show that all their positive roots are algebraically simple. This property has important
consequences from the point of view of local and global bifurcation theory. Finally, in Section 6 we derive and discuss the global bifurcation diagram sketched in Figure \ref{Fig9}.

\section{Multiplicity and structure of $T$-periodic and $2T$-periodic solutions in the model of \cite{LG00}}

According to  \cite[Th. 5.1]{LG00}, $(u,v)=(1,1)$ provides us with the unique $T$-periodic solution of \eqref{1.1}, and \eqref{1.1} admits, at least, two $2T$-periodic coexistence states if, and only if, $AB>4$, where
\begin{equation}
\label{2.1}
  A := \int_0^T \a(s)\,ds>0,\qquad B:=\int_0^T \b(s)\,ds>0.
\end{equation}
The next result sharpens these findings.

\begin{theorem}
\label{th2.1}
Suppose $AB>4$. Then, the problem \eqref{1.1} possesses exactly two $2T$-periodic coexistence states (with minimal period $2T$, of course).
\end{theorem}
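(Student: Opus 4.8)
The plan is to reduce the counting of $2T$-periodic coexistence states to the counting of fixed points of a one–dimensional map built from the square of the monodromy operator, and then to close the count by a concavity argument. Since $\a\b=0$, the system \eqref{1.1} integrates piecewise. On $[0,\tfrac T2]$ one has $\b\equiv0$, hence $v\equiv v_0$ and $u'=\a(t)u(1-v_0)$, so $u(\tfrac T2)=u_0e^{(1-v_0)A}$; on $[\tfrac T2,T]$ one has $\a\equiv0$, hence $u$ is frozen and $v(T)=v_0e^{(u(\tfrac T2)-1)B}$. Thus
\[
  \mc{P}_1(u_0,v_0)=\Bigl(u_0e^{(1-v_0)A},\ v_0e^{(u_0e^{(1-v_0)A}-1)B}\Bigr),
\]
and a componentwise positive $2T$-periodic solution of \eqref{1.1} is precisely a positive fixed point of $\mc{P}_2:=\mc{P}_1\circ\mc{P}_1$. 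Writing $(u_1,v_1):=\mc{P}_1(u_0,v_0)$ for the value at $t=T$, the fixed-point condition reads
\[
  u_1=u_0e^{(1-v_0)A},\qquad v_1=v_0e^{(u_1-1)B},\qquad u_0=u_1e^{(1-v_1)A},\qquad v_0=v_1e^{(u_0-1)B}.
\]

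\emph{Reduction to a scalar equation.} Multiplying the first and third of these equations and cancelling $u_0u_1>0$ forces $e^{(2-v_0-v_1)A}=1$, i.e. $v_0+v_1=2$; similarly the second and fourth give $u_0+u_1=2$. Substituting $u_1=2-u_0$ and $v_1=2-v_0$ back in, all four equations collapse to
\[
  e^{(1-v_0)A}=\frac{2-u_0}{u_0},\qquad e^{(1-u_0)B}=\frac{2-v_0}{v_0},
\]
which already forces $u_0,v_0\in(0,2)$; conversely, any solution of this pair in $(0,2)^2$ gives, via $u_1:=2-u_0$, $v_1:=2-v_0$, a positive fixed point of $\mc{P}_2$ (a direct check of the four equations). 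Now set $u_0=1+s$, $v_0=1+r$ with $s,r\in(-1,1)$; the pair becomes $e^{Ar}=\tfrac{1+s}{1-s}$, $e^{Bs}=\tfrac{1+r}{1-r}$, that is $s=\tanh(\tfrac A2 r)$ and $r=\tanh(\tfrac B2 s)$, hence the single equation $s=G(s)$ with
\[
  G(s):=\tanh\Bigl(\tfrac A2\tanh(\tfrac B2 s)\Bigr),\qquad s\in(-1,1).
\]
Thus the positive $2T$-periodic coexistence states of \eqref{1.1} are in one-to-one correspondence with the fixed points of $G$ in $(-1,1)$ via $s\mapsto(1+s,\,1+\tanh(\tfrac B2 s))$, the point $s=0$ returning the $T$-periodic state $(1,1)$.

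\emph{Analysis of $G$.} The map $G$ is odd, strictly increasing, maps $\R$ into $(-1,1)$, and satisfies $G(0)=0$, $G'(0)=\tfrac{AB}{4}$. When $AB>4$ one has $G'(0)>1$, so $G(s)>s$ for small $s>0$, while $G(s)<\tanh(\tfrac A2\tanh\tfrac B2)<1$ yields $G(s)<s$ for $s$ close to $1$; hence $G$ has at least one fixed point in $(0,1)$, recovering the existence statement \cite[Th.~5.1]{LG00}. To get \emph{exactly} one I would show that $G$ is strictly concave on $(0,1)$: writing $G=\tanh\circ H$ with $H(s):=\tfrac A2\tanh(\tfrac B2 s)$, one computes $G''=\operatorname{sech}^2(H)\bigl[H''-2\tanh(H)(H')^2\bigr]$, and for $s>0$ both $H''<0$ (as $H$ is concave on $(0,\infty)$) and $\tanh(H)>0$, so $G''<0$. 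Then $\Delta(s):=G(s)-s$ is strictly concave on $(0,1)$ with $\Delta(0)=0$, $\Delta'(0)=\tfrac{AB}{4}-1>0$ and $\Delta<0$ near $1$, so $\Delta$ vanishes at a unique $s_*\in(0,1)$; by oddness $-s_*$ is the only fixed point of $G$ in $(-1,0)$. Translating back, the only positive fixed points of $\mc{P}_2$ are $(1,1)$ and the two distinct points $(1\pm s_*,\,1\pm r_*)$ with $r_*:=\tanh(\tfrac B2 s_*)$; these last two are not $(1,1)$, hence have minimal period $2T$, and they form a single $2$-cycle of $\mc{P}_1$ (the two time-translates of one $2T$-periodic orbit). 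So \eqref{1.1} has exactly two $2T$-periodic coexistence states.

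\emph{Where the difficulty lies.} The existence of two such states is already in \cite[Th.~5.1]{LG00}; the whole force of Theorem \ref{th2.1} is in the word ``exactly'', and the only genuine work is the sign of $G''$ on $(0,1)$. What makes it painless is the substitution $u_0=1+s$, $v_0=1+r$, which uncovers the hidden composition-of-$\tanh$ structure; with a less fortunate parametrisation (for instance working directly with $e^{(1-v_0)A}=(2-u_0)/u_0$) the needed monotonicity and convexity are markedly more awkward to extract. The one routine point to be checked with care is that the passage to the collapsed pair is a genuine equivalence, so that no fixed point is created or lost.
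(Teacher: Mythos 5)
Your proof is correct, and it follows the same overall skeleton as the paper: exploit $\a\b=0$ to write the monodromy map explicitly, reduce the fixed-point equations of $\mathcal{P}_2$ to $u_0+u_1=2$, $v_0+v_1=2$, and then to a single scalar equation whose nontrivial roots are counted by a concavity argument. Where you genuinely diverge is in how the crucial \emph{exactness} step is handled. The paper keeps the original variable $x=v_0$ and studies $\v(x)=x\bigl(e^{\frac{e^{(1-x)A}-1}{e^{(1-x)A}+1}B}+1\bigr)-2$, proving uniqueness of the zeros in $(0,1)$ and $(1,2)$ through a fairly heavy computation: it evaluates $\v''$ at an arbitrary critical point, uses the critical-point relation to eliminate terms, and shows $\v''<0$ on $(0,1)$ and $\v''>0$ on $(1,2)$, so each subinterval carries exactly one (simple) zero. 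You instead shift to $u_0=1+s$, $v_0=1+r$, which turns the collapsed pair into $s=\tanh(\tfrac A2 r)$, $r=\tanh(\tfrac B2 s)$, hence into the fixed-point problem for $G(s)=\tanh\bigl(\tfrac A2\tanh(\tfrac B2 s)\bigr)$; oddness plus the two-line computation $G''=\mathrm{sech}^2(H)\bigl[H''-2\tanh(H)(H')^2\bigr]<0$ on $(0,1)$ then yields exactly one fixed point on each side of $0$. The two routes are in fact literally equivalent --- the paper's $\v(x)=0$ becomes, after $x=1+r$, precisely $r=\tanh\bigl(\tfrac B2\tanh(\tfrac A2 r)\bigr)$ --- but your parametrisation makes the concavity and the symmetry $(u_0,v_0)\mapsto(2-u_0,2-v_0)$ (the $2$-cycle structure of $\mathcal{P}_1$) transparent, at the price of nothing; the paper's version stays closer to the formulas reused later (its $\v(B,x)$ is the object fed into the Crandall--Rabinowitz analysis after Theorem \ref{th2.1}). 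The only points to state with a little more care are the routine ones you flag yourself: the equivalence of the four fixed-point equations with the collapsed pair (which you verify correctly), and the passing remark that the two nontrivial states have minimal period $2T$, which, as in the paper, rests on $(1,1)$ being the only $T$-periodic coexistence state.
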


\begin{proof}
We proceed as in the proof  \cite[Th. 5.1]{LG00}. Since $\a\b=0$ in $\R$, the system
\eqref{1.1} can be solved. Actually, for every $(u_0,v_0)\in\R^2$, the unique solution of \eqref{1.1}, $(u,v)$, such that
$(u(0),v(0))=(u_0,v_0)$ is given by
\begin{equation}
\label{2.2}
  u(t)= u_0 e^{(1-v_0)\int_0^t\a},\qquad v(t)=v_0 e^{(u(T)-1)\int_0^t\b}, \qquad t\in \R.
\end{equation}
Thus, the associated $T$-time and $2T$-time Poincar\'e maps, $\mathcal{P}_1$ and $\mathcal{P}_2$, are given by
\begin{equation}
\label{2.3}
  (u_1,v_1)  :=\mathcal{P}_1(u_0,v_0),\qquad u_1:=u_0e^{(1-v_0)A},\qquad v_1:= v_0e^{(u_1-1)B},
\end{equation}
and
\begin{equation}
\label{2.4}
  (u_2,v_2)  := \mathcal{P}_2(u_0,v_0)=\mathcal{P}^2_1(u_0,v_0)= \mathcal{P}_1(u_1,v_1)=
  \left( u_1e^{(1-v_1)A}, v_1e^{(u_2-1)B}\right).
\end{equation}
Thus, substituting \eqref{2.3} into \eqref{2.4} yields
\begin{equation}
\label{2.5}
  u_2= u_0 e^{(2-v_0-v_1)A},\qquad v_2= v_0 e^{(u_1+u_2-2)B}.
\end{equation}
A solution  with initial data $(u_0,v_0)$ provides us with a componentwise positive fixed point of $\mathcal{P}_2$ if, and only if,
$u_0>0$, $v_0>0$, $v_0+v_1=2$ and $u_1+u_2=2$. Hence, since $u_2=u_0$, this is equivalent to
\begin{equation}
\label{2.6}
   u_0>0,\qquad v_0>0,\qquad v_0+v_1=2,\qquad u_0+u_1=2.
\end{equation}
Note that, owing to \eqref{2.6},
\[
  0<u_0, u_1 <2,\qquad 0<v_0,v_1<2.
\]
Since $u_1=2-u_0$ and $v_1=2-v_0$, from \eqref{2.3} it becomes apparent that
\[
   2-u_0=u_0e^{(1-v_0)A},\qquad 2-v_0= v_0e^{(1-u_0)B}.
\]
Consequently,
\begin{equation}
\label{2.7}
 u_0=\frac{2}{e^{(1-v_0)A}+1},\qquad 2-v_0= v_0e^{\left(1-\frac{2}{e^{(1-v_0)A}+1}\right)B}=
 v_0e^{\frac{e^{(1-v_0)A}-1}{e^{(1-v_0)A}+1}B}
\end{equation}
and therefore, the $2T$-periodic coexistence states are given by the interior zeros of the map
\begin{equation}
\label{2.8}
  \v(x):= x\left( e^{\frac{e^{(1-x)A}-1}{e^{(1-x)A}+1}B}+1\right)-2,\qquad x\in [0,2].
\end{equation}
As this function satisfies $\v(0)=-2<0$, $\v(1)=0$, $\v(2)>0$ and
\[
  \v'(1)= 2-\frac{AB}{2}<0,
\]
because we are assuming that $AB>4$, it is easily seen that $\v(x)$ possesses, at least, besides $1$, two zeros, $z_1\in (0,1)$ and
$z_2\in (1,2)$. Note that $1$ provides us with the (unique) $T$-periodic solution of \eqref{1.1}. That these zeros are unique is based on the fact that any critical point of $\v$ on $(0,1)$, $x$, must satisfy $\v''(x)<0$, and hence, it is a quadratic local maximum, while $\v''(y)>0$ for all critical point, $y$, of $\v$ in $(1,2)$. In particular, since $\v(0)<0$ and $\v'(1)<0$, this entails
that $z_1$ is simple and, actually, $\v'(z_1)>0$, for as, otherwise, $\v(x)$ should have a local minimum
in $(0,1)$, which is impossible. Similarly, $\v'(z_2)>0$. In order to show the previous claim, suppose
\[
  \v'(x)=0 \quad \hbox{for some\;\;} x\in (0,2).
\]
Then,
\begin{equation}
\label{2.9}
  \v'(x)=  e^{\tfrac{e^{(1-x)A}-1}{e^{(1-x)A}+1}B}\left( 1- \frac{2ABxe^{(1-x)A}}{[e^{(1-x)A}+1]^2}\right)+1=0.
\end{equation}
Moreover, differentiating $\v'$ and rearranging terms yields
\begin{equation}
\label{2.10}
\v''(x)=e^{\tfrac{e^{(1-x)A}-1}{e^{(1-x)A}+1}B}\left[x\left(\tfrac{2ABe^{(1-x)A}}{[e^{(1-x)A}+1]^2}\right)^2 - \tfrac{4ABe^{(1-x)A}}{[e^{(1-x)A}+1]^2} + 2A^2Bxe^{(1-x)A}\tfrac{1-e^{(1-x)2A}}{[e^{(1-x)A}+1]^4}\right].
\end{equation}
Now, after some straightforward manipulations, it is easily seen that  \eqref{2.9} implies
\begin{equation}
\label{2.11}
\tfrac{2 \Big(e^{-\tfrac{e^{(1-x)A}-1}{e^{(1-x)A}+1}B}+1\Big)}{x}=\tfrac{4ABe^{(1-x)A}}{[e^{(1-x)A}+1]^2}, \qquad
\Big(\tfrac{e^{-\tfrac{e^{(1-x)A}-1}{e^{(1-x)A}+1}B}+1}{x}\Big)^2=\left(\tfrac{2ABe^{(1-x)A}}{[e^{(1-x)A}+1]^2}\right)^2,
\end{equation}
and substituting \eqref{2.11} into \eqref{2.10} we find that
\begin{align*}
\varphi''(x)&=e^{\tfrac{e^{(1-x)A}-1}{e^{(1-x)A}+1}B}\Big[ x\Big(\tfrac{e^{-\tfrac{e^{(1-x)A}-1}{e^{(1-x)A}+1}B}+1}{x}\Big)^2 -\tfrac{2\Big(e^{-\tfrac{e^{(1-x)A}-1}{e^{(1-x)A}+1}B}+1\Big)}{x}\Big] \\& \hspace{6cm} + e^{\tfrac{e^{(1-x)A}-1}{e^{(1-x)A}+1}B}\left[ 2A^2Bxe^{(1-x)A} \tfrac{1-e^{(1-x)2A}}{[e^{(1-x)A}+1]^4}\right] \\&=e^{\tfrac{e^{(1-x)A}-1}{e^{(1-x)A}+1}B}\Big[
\tfrac{e^{-2\tfrac{e^{(1-x)A}-1}{e^{(1-x)A}+1}B} - 1}{x} + 2A^2Bxe^{(1-x)A}\tfrac{1-e^{(1-x)2A}}{[e^{(1-x)A}+1]^4}\Big].
\end{align*}
Suppose $x\in(0,1) $. Then, the following holds
\[
  e^{-2\tfrac{e^{(1-x)A}-1}{e^{(1-x)A}+1}B}-1<0,\qquad  1-e^{(1-x)2A}<0.
\]
Therefore, $ \varphi''(x)<0 $, as claimed above.
\par
Suppose $x\in (1,2]$. Then,
\[
  e^{-2\tfrac{e^{(1-x)A}-1}{e^{(1-x)A}+1}B}-1>0,\qquad  1-e^{(1-x)2A}>0,
\]
and hence, $\v''(x)>0$, as requested. The proof is completed.
\end{proof}

According to the proof of Theorem \ref{th2.1}, if $AB>4$ then $\v(x)$ has exactly three (simple) zeros in $(0,2)$, $z_1, z_2, z_3$, such that $z_1\in (0,1)$, $z_2\in (1,2)$ and $z_3=1$, whereas
$$
  \v'(1) = 2-\frac{AB}{2} \geq 0 \quad \hbox{if}\;\; AB\leq 4,
$$
and hence, $1$ is the unique zero of $\v$ in this case.  Note that if $AB=4$, then $\v'(1)=0$ and
$$
  \varphi''(1)=\left(\frac{AB}{2}\right)^2-AB=4-4=0.
$$
Moreover, differentiating twice yields
\[
\v'''(x)=e^{q}\left(3(q')^2+3q''+x\left[(q')^3+3q'q''+q'''\right]\right),\quad
q(x):=e^{\tfrac{e^{(1-x)A}-1}{e^{(1-x)A}+1}B}, \quad x\in [0,2].
\]
Thus,
$$
  \varphi'''(1)=\tfrac{AB(5AB+2A^2)}{8}>0
$$
and therefore, $1$ is a treble zero of $\v(x)$ if $AB=4$. On the other hand,  the function $\v(x)$ can be also regarded as an analytic function of $x$ that varies continuously with $B>0$ and does not vanish at the ends of $[0,2]$. By Rouch\'e's theorem, $\v$ must have three zeros, counting orders, for every $B>0$. As $1$ is the unique real zero of $\v(x)$ if $AB< 4$ and $\v'(1)>0$ in this range,
it becomes apparent that $\v(x)$ possesses two complex zeros if $AB<4$. Those complex solutions are not going to be taken into account
throughout this paper.
\par
Subsequently, we are going to regard $B$ as the main continuation parameter in problem \eqref{1.1}. According to our previous analysis,
we already know that $(1,1)$ is the unique $2T$-periodic solution of \eqref{1.1} if $B<4/A$ (note that the minimal period of this solution is $T$), whereas \eqref{1.1} possesses (exactly) three $2T$-periodic solutions for every $B>4/A$. Moreover, two of them, those with minimal period $2T$, bifurcate from $(1,1)$ as the parameter $B$ crosses the critical value $4/A$, as it will become apparent later.  Precisely, we regard the solutions of \eqref{1.1} as solutions of
\begin{equation}
\label{2.12}
  0=\v(B,x):=  x\left( e^{\frac{e^{(1-x)A}-1}{e^{(1-x)A}+1}B}+1\right)-2,\qquad x\in [0,2],
\end{equation}
for some $B>0$. Note that $(B,x)=(B,1)$ is a solution curve of \eqref{2.12} defined for all $B>0$. Moreover, the linearization of
\eqref{2.12} at $(B,1)$ is
\[
  \mathfrak{L}(B)= \frac{d\v(B,1)}{dx} =2-\frac{AB}{2}
\]
which establishes an isomorphism of $\R$, unless $B=4/A$. Thus, this is the unique value of the parameter where bifurcation to
$2T$-periodic solutions of \eqref{1.1} can occur from $(1,1)$. Since
\[
  N[\mathfrak{L}(4/A)]= \R = \mathrm{span\,}[1]
\]
and
\[
  \mathfrak{L}_1  := \frac{d \mathfrak{L}(4/A)}{d B} = -\frac{A}{2}\neq 0,
\]
it becomes apparent that
\begin{equation}
\label{2.13}
  \mathfrak{L}_1 1 \notin R[\mathfrak{L}(4/A)]=[0].
\end{equation}
Hence, by the main theorem of M. G. Crandall and P. H. Rabinowitz \cite{CR71}, there exist $s_0>0$ and two analytic maps,
$x, B : (-s_0,s_0)\to \R$ such that $x(0)=1$, $B(0)=4/A$, $x(s)=1+s+\mc{O}(s^2)$ as $s\to 0$, and $\v(B(s),x(s))=0$ for every $s\in (-s_0,s_0)$. Moreover, except for $x=1$, these are the unique solutions of $\v(B,x)=0$ in a neighborhood of $(B,x)=(4/A,1)$. As due to Theorem \ref{th2.1}, $\v(B,x)=0$ cannot admit a solution $x\neq 1$ if $B\leq 4/A$, it becomes apparent that $B(s)>4/A$ for all $s\in (-s_0,s_0)$. Note that $x(s)>1$ if $s\in (0,s_0)$, while $x(s)<1$ if $s\in (-s_0,0)$. On the other hand, it readily follows from
\eqref{2.12} that $\v(B,x)<0$ if $x\leq 0$ and $\v(B,x)>0$ if $x\geq 2$. Thus, any solution of $\v(B,x)=0$ satisfies $x \in (0,2)$. In particular, $x(s) \in (0,2)$ for all $s\in(-s_0,s_0)$. Thus, as owing to \cite[Th. 5.2]{LG00} any solution, $(B,x)$, of $\v=0$ with $B>4/A$ is non-degenerated, by a rather standard continuation argument involving the Implicit Function Theorem the next result holds true.

\begin{theorem}
\label{th2.2}
  The set of zeros $(B,x)$ of $\v=0$ with $x\neq 1$, consists of a (global) analytic curve, $(B(s),x(s))$, $s\in\R$, such that $x(s)\in (0,2)$ for all $s\in\R$ and $B(\R)=(4/B,+\infty)$, much like illustrated by Figure \ref{Fig1}. Actually, each of the two half-branches, the upper and the lower ones,  can be globally parameterized by $B\in (4/A,+\infty)$.
\end{theorem}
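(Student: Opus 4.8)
The plan is to combine the local Crandall--Rabinowitz picture at $(B,x)=(4/A,1)$, already obtained above, with the a priori classification of zeros supplied by Theorem \ref{th2.1} and the Implicit Function Theorem. First I would record what Theorem \ref{th2.1} (and its proof) gives for each fixed value of the parameter: if $B\leq 4/A$ the only real zero of $x\mapsto\v(B,x)$ in $[0,2]$ is $x=1$, while if $B>4/A$ there are exactly three, namely $z_1(B)\in(0,1)$, $x=1$ and $z_2(B)\in(1,2)$, and moreover $\v_x(B,z_i(B))>0$ for $i=1,2$ (all nontrivial zeros are non-degenerate, as also follows from \cite[Th.\ 5.2]{LG00}). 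In particular the maps $z_1:(4/A,+\infty)\to(0,1)$ and $z_2:(4/A,+\infty)\to(1,2)$ are already globally well defined and single valued, and their graphs, together with $\{x=1\}$, exhaust the zero set of $\v$.

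Next I would prove that $z_1$ and $z_2$ are real analytic. For continuity, if $B_n\to B_\ast\in(4/A,+\infty)$ then any subsequential limit $\xi\in[0,2]$ of $z_i(B_n)$ satisfies $\v(B_\ast,\xi)=0$; since $\v(B_\ast,0)=-2<0$, $\v(B_\ast,2)=2e^{\frac{e^{-A}-1}{e^{-A}+1}B_\ast}>0$, and $\v_x(B_\ast,1)=2-\tfrac{AB_\ast}{2}\neq0$ makes $x=1$ isolated among the zeros of $\v(B_\ast,\cdot)$, the point $\xi$ must be the unique zero of $\v(B_\ast,\cdot)$ in the corresponding open subinterval, i.e. $\xi=z_i(B_\ast)$; hence the whole sequence converges and $z_i$ is continuous. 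For analyticity, at any $B_\ast>4/A$ the joint real analyticity of $\v$ and $\v_x(B_\ast,z_i(B_\ast))>0$ let the Implicit Function Theorem produce a local real analytic $B\mapsto\tilde z_i(B)$ with $\tilde z_i(B_\ast)=z_i(B_\ast)$ solving $\v(B,\tilde z_i(B))=0$, which by the continuity just established and local uniqueness of zeros coincides with $z_i$ near $B_\ast$. Thus $z_1,z_2$ are globally real analytic functions, parameterized by $B\in(4/A,+\infty)$, with values in $(0,2)$: these are the lower and upper half-branches.

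Then I would glue the two half-branches through the bifurcation point. As $B\downarrow 4/A$ one has $z_i(B)\to1$: a subsequential limit $\xi$ solves $\v(4/A,\xi)=0$, and since $AB=4$ there the value $x=1$ is the unique real zero of $\v(4/A,\cdot)$ (a treble one, as computed above), so $\xi=1$. Consequently $\{\v=0\}\setminus\{x=1\}$, together with $(4/A,1)$, is exactly the union of the two graphs, which accumulate precisely at $(4/A,1)$. Near that point the Crandall--Rabinowitz curve $(B(s),x(s))$, $s\in(-s_0,s_0)$, is real analytic with tangent vector $(B'(0),x'(0))=(0,1)\neq0$, and by the uniqueness in Theorem \ref{th2.1} it coincides with $(B(s),z_2(B(s)))$ for $0<s<s_0$ and with $(B(s),z_1(B(s)))$ for $-s_0<s<0$; this exhibits the junction as a regular analytic point of the combined curve. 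Re-parameterizing the two half-branches by arclength then yields a single real analytic regular curve $(B(s),x(s))$, $s\in\R$, with $x(s)\in(0,2)$ for all $s$ and $B(\R)=(4/A,+\infty)$, each value $B>4/A$ being attained once on each half-branch, which is the assertion; the last sentence of the statement is just the fact that $z_1,z_2$ are functions of $B$ on $(4/A,+\infty)$.

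The only genuinely delicate step is the continuity (equivalently, non-escape) part of the second paragraph together with the limit as $B\downarrow 4/A$: one must rule out that a nontrivial branch collides with $x=1$ or runs out to $x\in\{0,2\}$ at a finite $B$, and identify the common limit at $B=4/A$. Both are settled by the sign of $\v$ at $x\in\{0,2\}$, the non-vanishing of $\v_x(B,1)$ for $B\neq4/A$, and the treble-zero computation at $AB=4$ carried out above; once Theorem \ref{th2.1} and the Crandall--Rabinowitz analysis are in hand, everything else is routine.
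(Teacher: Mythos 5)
Your proposal is correct and rests on the same ingredients as the paper -- the Crandall--Rabinowitz curve at $(B,x)=(4/A,1)$, the non-degeneracy of the nontrivial zeros for $B>4/A$, the signs of $\varphi(B,\cdot)$ at $x=0$ and $x=2$, and the Implicit Function Theorem -- but you organize the globalization differently. After the local analysis, the paper simply invokes ``a rather standard continuation argument involving the Implicit Function Theorem'': the local curve produced by \cite{CR71} is continued in $B$ using the non-degeneracy of every nontrivial solution with $B>4/A$ (quoted from \cite[Th.~5.2]{LG00}) together with the a priori bound $x\in(0,2)$, so the global curve is grown out of the local one. You instead let the exact multiplicity of Theorem \ref{th2.1} do the global work: for each $B>4/A$ the two nontrivial zeros $z_1(B)\in(0,1)$ and $z_2(B)\in(1,2)$ are already globally defined, single-valued functions of $B$, and the IFT (with $\varphi_x(B,z_i(B))>0$, as the proof of Theorem \ref{th2.1} shows) is only needed to upgrade them to analytic graphs, after which the Crandall--Rabinowitz curve glues them analytically across $(4/A,1)$. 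Your route makes it completely explicit that the curve exhausts the whole nontrivial zero set and that each half-branch is a global graph over $B\in(4/A,+\infty)$ -- facts the paper leaves implicit in the word ``continuation'' -- at the price of spelling out the continuity/non-escape step (no collision with $x=1$, no escape to $x\in\{0,2\}$ at finite $B$, and $z_i(B)\to 1$ as $B\downarrow 4/A$). The only point to phrase more carefully is the exclusion of $z_i(B_n)\to 1$ for $B_n\to B_*\neq 4/A$: isolation of $x=1$ as a zero of the single function $\varphi(B_*,\cdot)$ is not by itself enough; one should argue uniformly, e.g. that since $\varphi_x(B_*,1)\neq 0$ the IFT forces the zero set of $\varphi$ in a whole neighborhood of $(B_*,1)$ to reduce to the trivial line $x=1$, which is exactly what your subsequent IFT application provides, so this is a presentational rather than a substantive gap.
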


\begin{figure}[h!]
	\centering
	\includegraphics[scale=0.37]{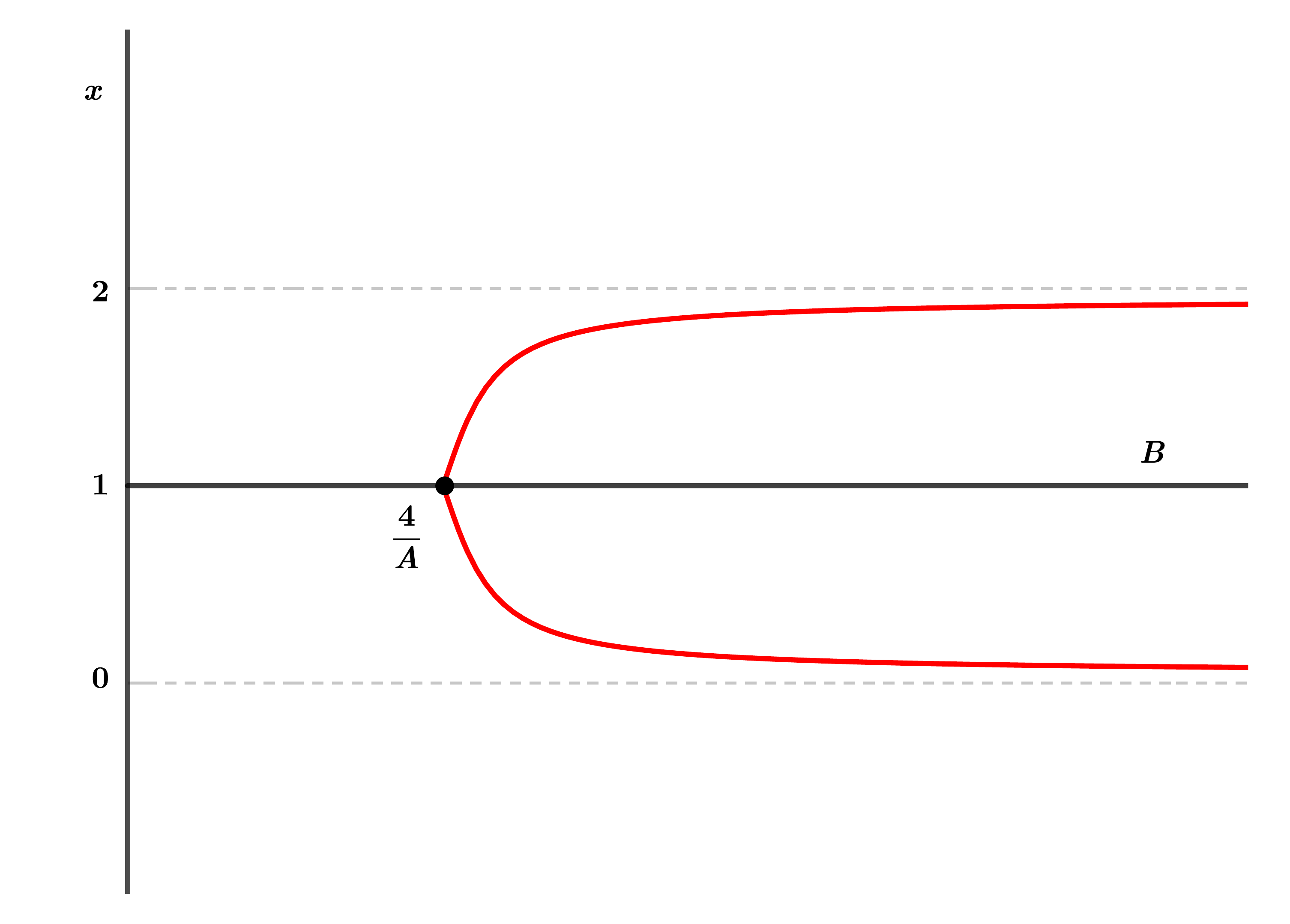}
	\caption{The set of $2T$-periodic solutions of \eqref{1.1}.}
	\label{Fig1}
\end{figure}

Since $\v(B,x)=0$ can be equivalently written down as
\[
  \dfrac{2}{x} -1=e^{\tfrac{e^{(1-x)A}-1}{e^{(1-x)A}+1}B},
\]
letting $B\to +\infty$ in this identity, it becomes apparent that
\[
  \lim_{B\ua +\infty} x = \left\{ \begin{array}{ll} 0 & \quad \hbox{if}\;\; x\in (0,1), \\
  2  & \quad \hbox{if}\;\; x\in (1,2),\end{array}\right.
\]
which is reflected in the global bifurcation diagram of Figure \ref{Fig1}.

\section{Constructing the $nT$-Poincar\'e maps}

Throughout the rest of this paper, for every integer $n\geq 1$, we denote by $\mc{P}_n$ the $nT$-Poincar\'e map of  \eqref{1.1}, and, for every
initial data $(u_0,v_0)$, with $u_0>0$ and $v_0>0$, we set
\begin{equation}
\label{3.1}
  (u_n,v_n):=\mathcal{P}_n (u_0,v_0) = \mathcal{P}_1^n (u_0,v_0)=(u_0,v_0).
\end{equation}
Then, iterating \eqref{2.3} $n$ times, it becomes apparent that
\begin{equation}
\label{3.2}
  (u_n,v_n)=\left( u_0 e^{(n-v_0-v_1-\cdots -v_{n-1})A},v_0e^{(u_1+u_2+\cdots + u_n -n)B}\right), \qquad n\geq 1.
\end{equation}
Consequently, the solution of \eqref{1.1} with initial data $(u_0,v_0)$, with $u_0>0$ and $v_0>0$,
provides us with a $nT$-periodic coexistence state of  \eqref{1.1} if, and only if,
\begin{equation}
\label{3.3}
\left\{
\begin{array}{ll}
n=u_0 + u_1 + \cdots + u_{n-1}, \\[2pt]
n=v_0 + v_1 + \cdots + v_{n-1},
\end{array}
\right.
\end{equation}
where we are using that $u_n=u_0$.  According to \eqref{3.2}, \eqref{3.3} can be equivalently expressed as
\begin{equation}
\label{3.4}
\left\{
\begin{array}{ll}
n=u_0\left[ 1 + e^{(1-v_0)A} + e^{(2-v_0-v_1)A}  + \cdots  + e^{(n-1 - v_0 - v_1 - \cdots - v_{n-2})A}\right], \\[4pt]
n=v_0\left[ 1 + e^{(u_1-1)B} + e^{(u_1+u_2-2)B}  + \cdots + e^{[u_1+u_2+\cdots +u_{n-1} - (n-1)]B}\right].
\end{array}
\right.
\end{equation}
As already shown in the proof of Theorem \ref{th2.1}, in the special case when $n=2$, $u_0$ can be easily obtained as a (explicit) function of $v_0$, which allowed as to express the system as a single equation of the unknown $x=v_0$. As this strategy does not work
when $n\geq 3$, in order to construct the set of $nT$-periodic solutions of \eqref{1.1} for all $n\geq 3$, throughout the rest of this paper we will make the additional assumption that
\begin{equation}
\label{3.5}
  x:=u_0=v_0 \quad \hbox{and} \quad A=B.
\end{equation}
Later, we will analyze their global topological structure through the distribution of their bifurcation points from the trivial curve $x=1$. Under these assumptions the next result holds. It is a pivotal result to express the Poincar\'e maps in a manageable way.

\begin{Lem}
\label{le3.1}
Suppose \eqref{3.3} and \eqref{3.5}. Then, for every $n\geq 2$,
\begin{equation}
\label{3.6}
   u_h=v_{n-h} \qquad \hbox{for all}\;\; h\in\{1,\ldots,n-1\}.
\end{equation}
Thus, the two equations of the system \eqref{3.4} coincide.
\end{Lem}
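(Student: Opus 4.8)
The plan is to prove \eqref{3.6} by induction on $h$, using the two recursions for $u_h$ and $v_{n-h}$ that follow from \eqref{3.2} together with the assumption $A=B$ and the coexistence equations \eqref{3.3}. First I would record the explicit formulas from \eqref{3.2}: for $1\le h\le n$,
\[
  u_h = x\,e^{(h - v_0 - v_1 - \cdots - v_{h-1})A},\qquad
  v_h = x\,e^{(u_1 + u_2 + \cdots + u_h - h)A},
\]
where I have already substituted $u_0 = v_0 = x$ and $B = A$. The base case $h=0$ is immediate, since $u_0 = x = v_0 = v_n$ by periodicity \eqref{3.1}. For the base case $h=1$ I would check $u_1 = v_{n-1}$: on one hand $u_1 = x\,e^{(1 - x)A}$; on the other hand, using the second coexistence equation in \eqref{3.3}, $u_1 + u_2 + \cdots + u_{n-1} = n - u_0 = n - x$ — wait, this uses the \emph{first} equation $n = u_0 + \cdots + u_{n-1}$ — so $v_{n-1} = x\,e^{(u_1 + \cdots + u_{n-1} - (n-1))A} = x\,e^{((n - x) - (n-1))A} = x\,e^{(1-x)A} = u_1$.

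For the inductive step, I would assume $u_j = v_{n-j}$ for all $j < h$ and show $u_h = v_{n-h}$. Comparing exponents, it suffices to show
\[
  h - (v_0 + v_1 + \cdots + v_{h-1}) \;=\; (u_1 + u_2 + \cdots + u_{n-h}) - (n-h).
\]
The right-hand side I would rewrite using the first coexistence equation in \eqref{3.3}: $u_1 + \cdots + u_{n-h} = (n - u_0) - (u_{n-h+1} + \cdots + u_{n-1}) = (n - x) - (u_{n-h+1} + \cdots + u_{n-1})$. By the induction hypothesis, $u_{n-h+1} + \cdots + u_{n-1} = v_{h-1} + v_{h-2} + \cdots + v_1$ (re-indexing $j = n - k$, so $k$ running over $n-h+1,\ldots,n-1$ corresponds to $j$ running over $1,\ldots,h-1$). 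Hence the right-hand side equals $(n - x) - (v_1 + \cdots + v_{h-1}) - (n-h) = h - x - (v_1 + \cdots + v_{h-1}) = h - (v_0 + v_1 + \cdots + v_{h-1})$, since $v_0 = x$. This is exactly the left-hand side, so the exponents agree and $u_h = v_{n-h}$, completing the induction.

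Finally, once \eqref{3.6} is established, I would deduce that the two equations of \eqref{3.4} coincide: the left equation reads $n = u_0 + u_1 + \cdots + u_{n-1}$ and the right reads $n = v_0 + v_1 + \cdots + v_{n-1}$; but $\{u_0, u_1, \ldots, u_{n-1}\} = \{v_0, v_n, v_{n-1}, \ldots, v_1\}$ as multisets by \eqref{3.6} together with $u_0 = v_n = v_0$, so the two sums are identical term by term after reindexing. The only subtlety — and the one place to be careful — is the bookkeeping of indices in the re-indexing step of the induction, since the induction hypothesis must be applied at the correct range and the coexistence relation \eqref{3.3} must be invoked to convert a partial sum of the $u_j$'s into a complementary partial sum; I expect this reindexing to be the main (though entirely routine) obstacle. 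It is worth noting that the argument uses both equations of \eqref{3.3} and the hypothesis $A = B$ in an essential way; without $A=B$ the exponents would carry mismatched factors and the conclusion would fail.
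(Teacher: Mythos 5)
Your strategy is the same as the paper's: write $u_h$ and $v_{n-h}$ explicitly via \eqref{3.2} (with $u_0=v_0=x$ and $B=A$), obtain the base case $u_1=v_{n-1}$ from the first equation of \eqref{3.3}, and then run an induction on $h$ in which a block of $u$'s is traded for a block of $v$'s and \eqref{3.3} closes the exponent. The base case and the final deduction that the two equations of \eqref{3.4} coincide are fine.

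However, the inductive step as you wrote it invokes identities that your stated hypothesis does not provide. Your hypothesis is $u_j=v_{n-j}$ for $1\le j\le h-1$, i.e.\ the pairs $(u_1,v_{n-1}),\dots,(u_{h-1},v_{n-h+1})$: low-index $u$'s equal high-index $v$'s. The reindexing $u_{n-h+1}+\cdots+u_{n-1}=v_{h-1}+\cdots+v_1$ requires instead $u_{n-j}=v_j$ for $1\le j\le h-1$: high-index $u$'s equal low-index $v$'s. For the full range $1\le h\le n-1$ the two families are the same set of identities, but for a partial range they are not, so the step is unjustified as written: at $h=2$, for instance, you know only $u_1=v_{n-1}$ and you use $v_1=u_{n-1}$. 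The repair is routine but needs an extra ingredient: carry both families in the induction, proving at step $h$ both $u_h=v_{n-h}$ (your computation, via the first equation of \eqref{3.3}, now legitimate because $u_{n-j}=v_j$, $j\le h-1$, is part of the hypothesis) and $v_h=u_{n-h}$; for the latter write $v_h=x\,e^{(u_1+\cdots+u_h-h)A}$, replace $u_j=v_{n-j}$ for $j\le h$ (now available), and use the second equation of \eqref{3.3}, $\sum_{j=0}^{n-1}v_j=n$, to get $v_h=x\,e^{[(n-h)-v_0-\cdots-v_{n-h-1}]A}=u_{n-h}$. This is precisely where the second equation of \eqref{3.3} enters; note that, contrary to your closing remark, your written argument uses only the first one. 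To be fair, the paper's proof of Lemma \ref{le3.1} performs the same silent transposition (it substitutes $v_j=u_{n-j}$ for $1\le j\le n-k-1$, while \eqref{3.7} provides $u_j=v_{n-j}$ on that range), so you have essentially reproduced its argument; the simultaneous induction just described is what makes either version airtight.
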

\begin{proof}
Fix $n\geq 2$. Then, owing to \eqref{3.2}, \eqref{3.3} and \eqref{3.5}, we find that
\[
  u_1 = u_0e^{(1-v_0)A} = v_0e^{(1-u_0)B} = v_0e^{[u_1+u_2+\cdots +u_{n-1}-(n-1)]B} =v_{n-1}.
\]
This relation provides us with the first identity of \eqref{3.6} ($h=1$). In particular, it shows \eqref{3.6} when $n=2$. More generally, suppose that $n\geq 3$ and that there exists $k\geq 1$ such that
\begin{equation}
\label{3.7}
  u_h=v_{n-h} \qquad \hbox{for all}\;\; h \in \{1,\ldots,n-k-1\}.
\end{equation}
Then, thanks to \eqref{3.2},  we have that
\[
 u_{n-k}=u_0e^{[(n-k)-v_0-v_1-\cdots -v_{n-k-1}]A}.
\]
Thus, by \eqref{3.5} and \eqref{3.7},
\begin{align*}
  u_{n-k} & =v_0e^{[(n-k)-u_0-u_{n-1}-\cdots -u_{k+1}]B} \\ & =
  v_0e^{[(n-k)-u_0-u_{n-1}-\cdots -u_{k+1}-u_k-u_{k-1}-\cdots-u_1+k+u_1+u_2+\cdots+u_k-k]B}.
\end{align*}
Thus, due to \eqref{3.3}, it becomes apparent that
\[
   u_{n-k}=v_0e^{(u_1+\cdots+u_k-k)B}=v_k,
\]
which concludes the proof of \eqref{3.6}. Therefore since \eqref{3.3} is equivalent to \eqref{3.4}, the two equtions of \eqref{3.4} coincide. The proof is complete.
\end{proof}

According to Lemma \ref{le3.1}, under condition \eqref{3.5}, to construct the fixed points of the Poincar\'e map $\mc{P}_n$, it suffices to consider any of the identities of \eqref{3.3} (or \eqref{3.4}), for instance, the first one. Thus, setting
\begin{equation}
\label{3.8}
  \v_n(u_0):=u_0+u_1(u_0)+u_2(u_0)+\cdots+u_{n-1}(u_0)-n, \qquad u_0>0,
\end{equation}
it becomes apparent that the zeros of $\v_n$ provide us with the positive fixed points of the Poincar\'e map $\mc{P}_n$.
By \eqref{3.2}
\[
  u_1(u_0):= u_0e^{(1-u_0)A},
\]
\begin{equation}
\label{3.9}
  u_2(u_0):=u_0e^{[2-u_0-v_1(u_0)]A}=u_0 e^{[2-u_0-v_0e^{(u_1-1)A}]A}=u_0 e^{(2-u_0-u_0e^{[u_0e^{(1-u_0)A}-1]A})A}
\end{equation}
and so on... though, in order to get a manageable  expression for $\v_n(u_0)$, all these terms should be reorganized in a slightly tricky way by using the relationships \eqref{3.3}, or \eqref{3.4}, which will be described in the proof of Theorem \ref{3.3}. The next result provides us with the Poincar\'e maps.

\begin{Prop}
\label{pr3.2}
Setting
\begin{equation}
\label{3.10}
\left\{
\begin{array}{lll}
E_0(x):=1,\quad E_1(x):=e^{(1-x)A},  \\[4pt]
E_n(x):=\left\{ \begin{array}{ll} e^{[x(E_1(x)+E_3(x)+\cdots+E_{n-1}(x))-\frac{n}{2}]A} & \quad \mathrm{if\ } \;\; n\in 2\N, \\[4pt]
e^{[\frac{n+1}{2}-x(E_0(x)+E_2(x)+\cdots+E_{n-1}(x))]A} & \quad \mathrm{if\ } \;\; n\in 2\N+1, \end{array} \right.
\end{array}
\right.
\end{equation}
for every $ n\geq 1 $ the Poincar\'e map is given through
\begin{equation}
\label{3.11}
	(u_n,v_n)=\mc{P}_n(x,x)=\left(xE_{2n-1}(x),xE_{2n}(x)\right).
\end{equation}
\end{Prop}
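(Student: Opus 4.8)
The plan is to reorganize the orbit of the point $(x,x)$ under $\mc{P}_1$ into a single sequence whose $k$-th term is precisely $xE_k(x)$, and to establish this by strong induction on $k$. Concretely, set $y_0:=v_0=x$ and, for $k\ge1$, $y_{2k-1}:=u_k$ and $y_{2k}:=v_k$, so that \eqref{3.11} becomes the single statement
\[
  y_k=x\,E_k(x)\qquad\text{for every }k\ge0.
\]
The base case is immediate, since $y_0=v_0=x=x E_0(x)$.

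For the inductive step, fix $k\ge1$ and assume $y_j=xE_j(x)$ for all $0\le j<k$. Suppose first that $k=2m-1$ is odd, so that $y_k=u_m$. Then, by \eqref{3.2} together with $A=B$ and $u_0=v_0=x$ from \eqref{3.5},
\[
  u_m=x\,e^{\left(m-(v_0+v_1+\cdots+v_{m-1})\right)A}.
\]
Since $v_0+v_1+\cdots+v_{m-1}=y_0+y_2+\cdots+y_{2m-2}$ and all the indices $0,2,\dots,2m-2$ are $<k$, the induction hypothesis turns this partial sum into $x\bigl(E_0(x)+E_2(x)+\cdots+E_{2m-2}(x)\bigr)$, whence the exponent of $u_m$ is exactly the exponent appearing in the odd-index branch of \eqref{3.10} for $n=k=2m-1$ (note $\tfrac{k+1}{2}=m$). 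Therefore $y_k=u_m=xE_k(x)$. Suppose instead that $k=2m$ is even, so that $y_k=v_m$; then \eqref{3.2} gives $v_m=x\,e^{(u_1+u_2+\cdots+u_m-m)A}$, and since $u_1+\cdots+u_m=y_1+y_3+\cdots+y_{2m-1}$ with all indices $<k$, the hypothesis rewrites this sum as $x\bigl(E_1(x)+E_3(x)+\cdots+E_{2m-1}(x)\bigr)$, so that the exponent of $v_m$ coincides with the even-index branch of \eqref{3.10} for $n=k=2m$. Hence $y_k=v_m=xE_k(x)$, which closes the induction.

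Reading the identity $y_k=xE_k(x)$ back through $u_n=y_{2n-1}$ and $v_n=y_{2n}$ then yields \eqref{3.11}. I do not expect any genuine obstacle: the whole argument amounts to iterating \eqref{2.3} (as already recorded in \eqref{3.2}) and substituting the definitions \eqref{3.10}. The only point demanding a little care is the bookkeeping of the index correspondence between the orbit $(u_k,v_k)$ and the sequence $(E_j)$; the key observation that makes a single strong induction on $k$ go through is that \eqref{3.2} expresses $v_m$ in terms of $u_1,\dots,u_m$ and $u_m$ in terms of $v_0,\dots,v_{m-1}$, i.e. always through terms of strictly smaller index in the merged sequence $(y_k)$.
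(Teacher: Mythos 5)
Your proof is correct and follows essentially the same route as the paper: both arguments verify by induction that the iterates of $\mc{P}_1$ starting at $(x,x)$ reproduce exactly the exponents prescribed by \eqref{3.10}, using only the iterated form \eqref{3.2} of \eqref{2.3} together with \eqref{3.5}, and neither uses the periodicity conditions \eqref{3.3}. The only difference is bookkeeping: you merge $(u_k,v_k)$ into a single sequence and run a strong induction on the merged index via the cumulative sums in \eqref{3.2}, whereas the paper inducts on the pair $(u_n,v_n)$ through the one-step recursions $u_n=u_{n-1}e^{(1-v_{n-1})A}$ and $v_n=v_{n-1}e^{(u_n-1)A}$ and then unfolds the definitions of $E_{2n-3}$ and $E_{2n-2}$ to recombine the exponents; your organization avoids that unfolding but yields nothing essentially different.
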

\begin{proof} By \eqref{3.2} and the definition of $E_1$ and $E_2$, it is easily seen that
\[
u_1=xe^{(1-x)A}=xE_1(x)\quad\hbox{and} \quad v_1=xe^{[xe^{(1-x)A}-1]A}=xE_2(x).
\]
Assume, as an induction hypothesis, that, for some integer $n\geq 1$,
\begin{equation}
\label{3.12}
u_{n-1}=xE_{2(n-1)-1}(x)\quad\hbox{and} \quad v_{n-1}=xE_{2(n-1)}(x).
\end{equation}
To prove \eqref{3.11}we argue as follows. According to \eqref{3.2},
\[
 u_n =xe^{(n-v_0-v_1-\cdots -v_{n-1})A}=u_{n-1}e^{(1-v_{n-1})A}.
\]
Thus, by the induction hypothesis and \eqref{3.10},
\begin{align*}
 u_n& = xE_{2n-3}(x)e^{(1-xE_{2n-2}(x))A}\\&=xe^{[\frac{2n-2}{2}-x(E_0+E_2+\cdots +E_{2n-4})]A}e^{(1-xE_{2n-2})A}
 \\&=xe^{[n-x(E_0+E_2+\cdots +E_{2n-4}+E_{2n-2})]A}=xE_{2n-1}(x).
\end{align*}
This provides us with the value of $u_n$ in \eqref{3.11}. Similarly,
\[
  v_n=xe^{(u_1+\cdots +u_n-n)A}=v_{n-1}e^{(u_n-1)A}.
\]
Thus, by \eqref{3.12}, since we already know that $u_n=xE_{2n-1}(x)$, we can infer that
\begin{align*}
v_n & =xE_{2n-2}(x)e^{(xE_{2n-1}(x)-1)A}
\\&=xe^{[x(E_1+E_3+\cdots +E_{2n-3})-(n-1)]A}e^{(xE_{2n-1}-1)A}
\\&=xe^{[x(E_1+E_3+\cdots +E_{2n-3}+E_{2n-1})-n]A}=xE_{2n}(x).
\end{align*}
This ends the proof.
\end{proof}

As a direct consequence,  from Proposition \eqref{pr3.2} one can get the auxiliary maps $\v_n$, $n\geq 1$, introduced in \eqref{3.8}.

\begin{Thm}
\label{th3.3} For every integer $n\geq 1$,
\begin{equation}
\label{3.13}
	\varphi_n(x)= \varphi_{n-1}(x) - 1 + xE_{n-1}(x).
\end{equation}	
\end{Thm}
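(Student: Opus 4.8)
The natural route is induction on $n$, resting on Proposition~\ref{pr3.2} and on the defining formula $\varphi_n(x)=u_0+u_1(x)+\cdots+u_{n-1}(x)-n$ of \eqref{3.8} (with the convention $\varphi_0\equiv 0$). The cases $n=1,2$ are immediate: \eqref{3.13} for $n=1$ reads $\varphi_1(x)=-1+xE_0(x)=x-1=u_0-1$, and for $n=2$ it reads $\varphi_2(x)=\varphi_1(x)-1+xE_1(x)=(x-1)-1+xe^{(1-x)A}$, which is $u_0+u_1-2$ since $u_1=xE_1(x)$ by \eqref{3.11}.

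For the inductive step, subtracting the defining formulas for $\varphi_n$ and $\varphi_{n-1}$ yields at once
\[
  \varphi_n(x)-\varphi_{n-1}(x)=u_{n-1}(x)-1,
\]
so the whole matter reduces to putting the first coordinate of the $(n-1)$-st iterate into the form $u_{n-1}(x)=xE_{n-1}(x)$. This is the ``tricky reorganization'' advertised right after \eqref{3.9}. Starting from \eqref{3.2} we have $u_{n-1}=u_0\,e^{(n-1-v_0-\cdots-v_{n-2})A}$; using the $nT$-periodicity relations \eqref{3.3}--\eqref{3.4} (that is, $v_0+\cdots+v_{n-1}=n$) together with the reflection identity $u_h=v_{n-h}$ of Lemma~\ref{le3.1}, we trade the partial $v$-sum in the exponent for the matching partial $u$-sum, and then the induction hypothesis — which writes $u_1,\dots,u_{n-2}$ through $E_1,\dots,E_{n-2}$ — lets us recognise the exponent as the one prescribed in \eqref{3.10}. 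This recognition has to be performed separately in the two parity classes $n-1\in2\N$ and $n-1\in2\N+1$, since \eqref{3.10} branches there; in the first the exponent collapses to $\big[x(E_1+E_3+\cdots+E_{n-2})-\tfrac{n-1}{2}\big]A$ and in the second to $\big[\tfrac{n}{2}-x(E_0+E_2+\cdots+E_{n-2})\big]A$, which in each case is precisely the exponent defining $E_{n-1}(x)$. This gives \eqref{3.13}.

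The single delicate point, and the only real work, is this reorganization. Read off term by term, Proposition~\ref{pr3.2} gives $u_{n-1}(x)=xE_{2n-3}(x)$, which as a function of $x$ is \emph{not} $xE_{n-1}(x)$; so \eqref{3.13} must be understood as an identity between functions sharing the same positive zeros (the $nT$-periodic coexistence states), and the bridge between the ``raw'' shape $x\sum_k E_{2k-1}(x)$ and the ``telescoping'' shape $x\sum_{k=0}^{n-1}E_k(x)$ is exactly what Lemma~\ref{le3.1} supplies, through the folding $E_{2j-1}=E_{2(n-j)}$ valid along a fixed point of $\mc{P}_n$. An equivalent, cleaner organisation is to unfold the recursion first into the closed form $\varphi_n(x)=x\sum_{k=0}^{n-1}E_k(x)-n$ and then check directly, using Proposition~\ref{pr3.2} and that folding, that its positive zeros are the fixed points of $\mc{P}_n$; \eqref{3.13} is then a one-line consequence of the closed form.
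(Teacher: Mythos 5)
Your proposal is correct and follows essentially the same route as the paper: its proof likewise uses the reflection identity of Lemma \ref{le3.1} (valid under the fixed-point relations \eqref{3.3}) to fold the sum \eqref{3.8} into the symmetric groupings \eqref{3.14}--\eqref{3.15}, then applies Proposition \ref{pr3.2} to arrive at precisely your closed form $\varphi_n(x)=x\sum_{k=0}^{n-1}E_k(x)-n$ (displays \eqref{3.16}--\eqref{3.17}), from which \eqref{3.13} follows with the induction hypothesis. Your caveat that \eqref{3.13} is a reorganization resting on Lemma \ref{le3.1} rather than a term-by-term identity with \eqref{3.9} agrees with the paper's own remark after \eqref{3.9}, so the two arguments coincide in substance.
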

\begin{proof}  First note that when $n$ is an odd integer, according to Lemma \ref{le3.1}, we have that
\begin{align}
\label{3.14}
  \varphi_n(x)& =u_0+u_1+\cdots +u_{\frac{n-1}{2}}+u_{\frac{n-1}{2}+1}+u_{\frac{n-1}{2}+2}+\cdots + u_{n-2}+u_{n-1}-n \nonumber \\ & =
u_0+u_1+\cdots +u_{\frac{n-1}{2}}+v_{\frac{n-1}{2}}+v_{\frac{n-1}{2}-1}+\cdots + v_{2}+v_{1}-n\nonumber \\ & =
u_0+u_1+v_1+u_2+v_2+\cdots + u_{\frac{n-1}{2}-1}+v_{\frac{n-1}{2}-1}+u_{\frac{n-1}{2}}+v_{\frac{n-1}{2}}-n.
\end{align}
Similarly, when $n$ is even,
\begin{align}
\label{3.15}
  \varphi_n(x)& =u_0+u_1+\cdots +u_{\frac{n}{2}}+u_{\frac{n}{2}+1}+u_{\frac{n}{2}+2}+\cdots + u_{n-2}+u_{n-1}-n \nonumber \\ & =
u_0+u_1+\cdots +u_{\frac{n}{2}}+v_{\frac{n}{2}-1}+v_{\frac{n}{2}-2}+\cdots + v_{2}+v_{1}-n\nonumber \\ & =
u_0+u_1+v_1+u_2+v_2+\cdots + u_{\frac{n}{2}-1}+v_{\frac{n}{2}-1}+u_{\frac{n}{2}}-n.
\end{align}
To prove \eqref{3.13} a complete induction argument will be used. When $n=1$,
\[
  \v_1(x)=x-1.
\]
When $n=2$, by \eqref{3.2},
\[
  \v_2(x):=x+u_1-2=x+x e^{(1-x)A}-2=\v_1(x)-1+x E_1(x).
\]
As the complete induction hypothesis,  suppose that, for any given $\nu\geq 2$, \eqref{3.13} holds for every
$n \in \{1,2,\ldots,2\nu-3,2\nu-2\}$. Then, thanks to \eqref{3.14} and \eqref{3.15},
\begin{align*}
  \v_{2\nu-1}(x) & = u_0+u_1+v_1+u_2+v_2+\cdots + u_{\nu-2}+v_{\nu-2}+u_{\nu-1}+v_{\nu-1}-2\nu+1, \\
  \v_{2\nu}(x)& = u_0+u_1+v_1+u_2+v_2+\cdots + u_{\nu-1}+v_{\nu-1}+u_{\nu}-2\nu.
\end{align*}
Thus, thanks to \eqref{3.11},
\begin{equation}
\label{3.16}
\begin{split}
\varphi_{2\nu-1}(x)=x & +xE_1(x) +x E_2(x) +x E_3(x) + x E_4(x)+\cdots  \\ & + xE_{2\nu-5}(x)+xE_{2\nu-4}(x) +
x E_{2\nu -3}(x)+ x E_{2\nu-2}(x) -2\nu +1.
\end{split}
\end{equation}
Similarly,
\begin{equation}
\label{3.17}
\begin{split}
\varphi_{2\nu}(x)=x & +xE_1(x) +x E_2(x) +x E_3(x) + x E_4(x)+\cdots  \\ & + xE_{2\nu-3}(x)+xE_{2\nu-2}(x) +
x E_{2\nu -1}(x)-2\nu.
\end{split}
\end{equation}
Therefore,  by the induction hypothesis,
\[
\varphi_{2\nu-1}(x)=\v_{2\nu-2}(x) -1 + x E_{2\nu-2}(x).
\]
Similarly,
\[
  \varphi_{2\nu}(x)= \v_{2\nu-1}(x)-1+x E_{2\nu-1}(x).
\]
The proof is complete.
\end{proof}

\begin{Rem}
\label{re3.4} By \eqref{3.16} and \eqref{3.17} it becomes apparent that
\[
  \v_n(0)=-n <0 \quad\hbox{and}\quad \v_{n}(n) > 0 \quad \hbox{for all integer}\;\; n\geq 2.
\]
\end{Rem}

According to Theorem \ref{th3.3}, it is easily seen that
\begin{align}
\label{3.18}
  \v_1(x) & =x-1, \\ \v_2(x) & = \v_1(x)-1+x e^{(1-x)A},\nonumber \\
  \v_3(x)  &=  \v_2(x)-1 +xe^{(xe^{(1-x)A}-1)A},\nonumber\\
  \varphi_4(x) & =\v_3(x)-1+ xe^{(2-x-xe^{(xe^{(1-x)A}-1)A})A}.\nonumber
\end{align}
Crucially, in the formula for $\v_2(x)$ given by Theorem \ref{th3.3} it is only required to compose two exponentials, while in
\eqref{3.9} we had to nest three. Such reduction in the complexity of $\v_2$ is explained by the  symmetries revealed by
Lemma \ref{le3.1} which facilitated the reorganization of the terms of $\v_n$ as to get a function with a minimal number of nested exponentials, much like in the algorithm of the proof of Theorem
\ref{th3.3}. Using this algorithm, the number of nested exponentials decreases by one when $n$ is odd, and
each of the the $E_n$'s defined by \eqref{3.10} consists of a composition of exactly $n$ exponentials.
The relevance of this reduction will not be completely understood until the next sections, where  the structure
of the zeros of the $\v_n$'s introduced in \eqref{3.8} will be analyzed. Those zeros are  the positive fixed points of the $nT$-time Poincar\'e maps.
\par
The main technical difficulty to determine the zeros of the  $\v_n$'s, even from the point of view of numerical analysis, relies on the high sensitivity of these functions to very small variations in the value of the parameter $A=B$. The higher the number of exponentials nested, the higher the sensitivity in $A$. As a result, when one tries to determine numerically the zeros of the map $\v_4$ for values of $A$ near $4$, the function $\v_4(x)$ takes values of order $10^{31}$ in a  neighborhood of zero. So, there is no chance to compute
the zeros of these maps assisted by the computer. When dealing with $\v_5$ the value of the parameter $A=B$ should not exceed the value
$2.5$, which is extremely unsatisfactory for our purposes here. These technical troubles inherent to the internal structure of the
associated maps $\v_n$ push us to make a direct analysis of the global structure of their zeros. In order to perform this global analysis we first need to ascertain the set of bifurcation points of $\v_n=0$ from the curve $(A,1)$. This analysis will be carried out in the next section.

\section{A canonical chain of associated polynomials}
Searching  for the potential bifurcation points from the curve $(A,1)$ to $nT$-periodic coexistence states, this section analyzes the spectrum of the linearized family
\[
\mathfrak{L}(n;A):= \frac{d\v_n (A,1)}{dx},\quad n\in\N,
\]
i.e., its zero set as a function of the parameter $A$, as well as the global structure of $\mf{L}(n;A)$.
Note that, since $(A,1)$ is the $T$-periodic coexistence state, it also provides us with a $nT$-periodic solution for all
$n\geq 1$ and, hence, by construction,  $\v_n(A,1)=0$ for all $A>0$ and $n\geq 1$. The curve $(A,1)$, $A>0$,  is the \emph{trivial} curve, as it is known. It is the curve from which are going to bifurcate the $nT$-periodic coexistence states of \eqref{1.1} under assumption \eqref{3.5}. Note also that, since every  $nT$-periodic solution is $knT$-periodic for all integer $k\geq 1$,
\begin{equation}
\label{4.1}
  \v_{kn}(x)=0 \quad \hbox{for all}\;\; x \in \v_n^{-1}(0)\quad \hbox{and}\quad k\geq 1.
\end{equation}
Throughout the rest of this paper we will denote
\begin{equation}
\label{4.2}
  p_n(A):=  \frac{d\v_n (A,1)}{dx}=\mf{L}(n;A),\qquad A>0.
\end{equation}
Differentiating with respect to $x$ the identity \eqref{3.13} yields
\begin{equation}
\label{4.3}
p_n(A)=p_{n-1}(A)+ E_{n-1}(1)+E'_{n-1}(1)\qquad \hbox{for all}\;\; A>0.
\end{equation}
The next result shows that $p_n \in\Z \left[ A\right]$.

\begin{Lem}
\label{le4.1}
For every $ n\in\N $, $p_n(A) $ is a polynomial in the variable $A$ with integer coefficients, i.e., $ p_n \in\Z[A] $.
\end{Lem}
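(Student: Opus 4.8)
The plan is to establish, by a single simultaneous induction on $n$, the two auxiliary facts that $E_n(1)=1$ and $E_n'(1)\in\Z[A]$ for every integer $n\geq 0$; the lemma then drops out of the recurrence \eqref{4.3} together with the normalization $p_1(A)=1$.

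The starting point is the structural remark that, by \eqref{3.10}, every $E_n$ has the form $E_n(x)=e^{A\,g_n(x)}$, where $g_0\equiv 0$, $g_1(x)=1-x$, and for $n\geq 2$
\[
g_n(x)=x\!\!\sum_{\substack{1\leq j\leq n-1\\ j\ \mathrm{odd}}}\!\!E_j(x)-\tfrac{n}{2}\quad(n\ \mathrm{even}),\qquad
g_n(x)=\tfrac{n+1}{2}-x\!\!\sum_{\substack{0\leq j\leq n-1\\ j\ \mathrm{even}}}\!\!E_j(x)\quad(n\ \mathrm{odd}).
\]
Hence $E_n'(x)=A\,g_n'(x)\,E_n(x)$, so once $E_n(1)=1$ is known we automatically get $E_n'(1)=A\,g_n'(1)$.

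For the base case: $E_0(1)=1$, $E_0'(1)=0$, $E_1(1)=1$ and $E_1'(1)=-A\in\Z[A]$. For the inductive step, assume $E_j(1)=1$ and $E_j'(1)\in\Z[A]$ for all $j<n$. A direct count shows that the defining sum for $g_n$ has exactly $n/2$ terms when $n$ is even (the odd indices $1,3,\dots,n-1$) and exactly $(n+1)/2$ terms when $n$ is odd (the even indices $0,2,\dots,n-1$); evaluating at $x=1$ and using $E_j(1)=1$ therefore yields $g_n(1)=0$, whence $E_n(1)=1$. Differentiating $g_n$ by the product rule and setting $x=1$ expresses $g_n'(1)$ as $\pm\tfrac{n}{2}$ (resp. $\mp\tfrac{n+1}{2}$) plus a sum of the numbers $E_j'(1)$ with $j<n$; by the induction hypothesis this lies in $\Z[A]$, so $E_n'(1)=A\,g_n'(1)\in\Z[A]$ (in fact in $A\,\Z[A]$).

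Finally, $p_1(A)=1\in\Z[A]$, and by \eqref{4.3} together with $E_{n-1}(1)=1$ we have $p_n(A)=p_{n-1}(A)+1+E_{n-1}'(1)$ for every $n\geq 2$; an immediate induction on $n$ then forces $p_n\in\Z[A]$. The only step that demands a moment's attention is the index bookkeeping which produces $g_n(1)=0$ in the inductive step, but this is a routine count of terms in the sums of \eqref{3.10}, and no estimate or delicate argument is involved.
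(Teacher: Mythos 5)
Your argument is correct and follows essentially the same route as the paper: reduce via the recurrence \eqref{4.3} and $p_1=1$ to showing $E_{n-1}(1)=1$ and $E_{n-1}'(1)\in\Z[A]$, the latter by induction through the recursion hidden in \eqref{3.10} (the paper's \eqref{4.7}). The only cosmetic difference is that you verify $E_n(1)=1$ by counting terms in the exponent at $x=1$, whereas the paper reads it off from the fixed-point identity $(1,1)=\mc{P}_n(1,1)$ in \eqref{3.11}; both are fine.
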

\begin{proof}
By \eqref{3.11}, it becomes apparent that, since $(1,1)$ is a fixed point of $ \mc{P}_n $,
\begin{equation*}
	(1,1)=\mc{P}_n(1,1)=\left(E_{2n-1}(1),E_{2n}(1)\right)
\end{equation*}
for all integer $n\geq 1$. Thus,
\begin{equation}
\label{4.4}
  E_n(1)=1 \quad \hbox{for all}\;\; n\geq 0.
\end{equation}
Thus, \eqref{4.3} becomes
\begin{equation}
\label{4.5}
p_n(A)=p_{n-1}(A)+ 1 +E'_{n-1}(1)
\end{equation}
for all $A>0$ and $n\geq 1$. Therefore, due to \eqref{3.18}  ,  $p_1(A) = \frac{d\v_1(A,1)}{dx}= 1$ and iterating \eqref{4.5} $n-2$ times show that,
for every integer $n\geq 2$,
\begin{equation}
\label{4.6}
  p_n(A)= n+E_1'(1)+E_2'(1)+\cdots +E_{n-1}'(1).
\end{equation}
Consequently, to complete the proof it suffices to show that $ E_n'(1)\in\Z[A] $ for all $n\geq 1$. Indeed, by \eqref{3.10},
$E_0'(1)=0$, $ E_1'(1)=-A $ and
\begin{equation}
\label{4.7}
E_n'(1)=\left\{ \begin{array}{ll}\displaystyle{ A  \sum_{\substack {j=1\\j\in 2\N+1}}^{n-1} \left[
  E_{j}(1) +E_{j}'(1)\right] } &\qquad \hbox{if\ } n\in2\N,\\[14pt]
\displaystyle{ -A \sum_{\substack {j=0\\ j\in 2\N}}^{n-1}\left[ E_{j}(1)+ E_{j}'(1) \right]} & \qquad \hbox{if\ } n\in2\N+1.
\end{array}
\right.
\end{equation}
Thus, by a complete induction argument it becomes apparent that  $ E_n'(1)\in\Z[A] $ for all $ n\in \N $. This concludes the proof.
\end{proof}

\begin{Rem}
\label{re4.2} In Section 5 we will prove that all the roots of the polynomial $p_n(A)$ are simple. In other words,
\[
  p'_n(r) = \frac{d \mf{L}}{d A}(n;r)\neq 0
\]
for all $r\in p_n^{-1}(0)$. Thus, the transversality condition of M. G. Crandall and P. H. Rabinowitz \cite{CR71} holds true.
Therefore, by the main theorem of \cite{CR71}, at every positive root of $p_n(A)$, $r$, an analytic curve of $nT$-periodic coexistence states of \eqref{1.1} bifurcates from $(A,1)$ at $r$. This feature explains our interest here in analyzing the nature and the distribution of the positive roots of the polynomials $p_n(A)$, $n\in\N$.
\end{Rem}

\begin{Rem}
\label{re4.3}
Occasionally, we will make explicit the dependence of the function $\v_n(x)$ on the parameter $A$ by setting
$\v_n(A,x)$, instead of $\v_n(x)$. Similarly, we will set  $E_n(A,x):=E_n(x)$ for all $n\in\N$. According to
\eqref{3.10}, $E_n(0,x)=1$ for all $n\in\N$ and $x \in [0,n]$. Thus, \eqref{3.13} yields
\begin{equation*}
	\varphi_n(0,x)= \varphi_{n-1}(0,x) - 1 + x
\end{equation*}	
for all $n\in\N$ and $x\in [0,n]$. Therefore, iterating $n-1$ times, it becomes apparent that
\begin{equation}
\label{4.8}
	\varphi_n(0,x)=n(x-1)\quad\hbox{for all}\;\; n\in\mathbb{N}.	
\end{equation}
As the zeros of $\v_n(A,x)$ provide us with the $nT$-periodic positive solutions of \eqref{1.1}, it follows from \eqref{4.8} that
$x=1$ is the unique  $nT$-periodic solution, for all $n\in \N$,  at the particular value of the parameter $A=0$.
\end{Rem}

The next list collects the polynomials $p_n(A)$ for $1\leq n\leq 13$.

\begin{small}
\begin{align*}
p_1(A)&=1\\
p_2(A)&=-A+2\\
p_3(A)&=-A^2+3\\
p_4(A)&=A^3-2A^2-2A+4\\
p_5(A)&=A^4-5A^2+5\\
p_6(A)&=-A^5+2A^4+4A^3-8A^2-3A+6\\
p_7(A)&=-A^6+7A^4-14A^2+7\\
p_8(A)&=A^7-2A^6-6A^5+12A^4+10A^3-20A^2-4A+8\\
p_9(A)&=A^8-9A^6+27A^4-30A^2+9\\
p_{10}(A)&=-A^9+2A^8+8A^7-16A^6-21A^5+42A^4+20A^3-40A^2-5A+10\\
p_{11}(A)&=-A^{10}+11A^8-44A^6+77A^4-55A^2+11\\
p_{12}(A)&=A^{11}-2A^{10}-10A^9+20A^8+36A^7-72A^6-56A^5+112A^4+35A^3-70A^2-6A+12\\
p_{13}(A)&=A^{12}-13A^{10}+65A^8-156A^6+182A^4-91A^2+13.
\end{align*}
\end{small}
The next table collects the coefficients of all the polynomials listed above.
\begin{table}[htb]
	\centering
	\begin{tabular}{|r|r|r|r|r|r|r|r|r|r|r|r|r|}
		    \hline
 $A^{12}$ & $ A^{11} $&$ A^{10} $&$ A^9 $&$ A^8 $&$ A^7 $&$ A^6 $&$ A^5 $&$ A^4 $&$ A^3 $&$ A^2 $&$ A^1 $&$ A^0 $  \\ \hline
		     0 & 0 & 0 & 0  &0  &0 & 0 & 0 & 0 & 0 & 0 & 0  & 1
			\\ \hline  0 &0 &0  &0  &0 &0 &0   &0   &0  &0  &0   &-1 &2
			\\ \hline 0 &0 &0  &0  &0 &0 &0   &0   &0  &0  &-1  &0  &3
			\\ \hline 0 &0 &0  &0  &0 &0 &0   &0   &0  &1  &-2  &-2 &4
			\\ \hline 0 &0 &0  &0  &0 &0 &0   &0   &1  &0  &-5  &0  &5
			\\ \hline 0 &0 &0  &0  &0 &0 &0   &-1  &2  &4  &-8  &-3 &6
			\\ \hline 0 &0 &0  &0  &0 &0 &-1  &0   &7  &0  &-14 &0  &7
			\\ \hline 0 &0 &0  &0  &0 &1 &-2  &-6  &12 &10 &-20 &-4 &8
			\\ \hline 0 &0 &0  &0  &1 &0 &-9  &0   &27 &0  &-30 &0  &9
			\\ \hline 0 &0 &0  &-1 &2 &8 &-16 &-21 &42 &20 &-40 &-5 &10
			\\ \hline 0 &0 &-1 &0  &11&0 &-44 &0   &77 &0  &-55 &0  &11
			\\ \hline 0 &1 &-2 &-10&20&36&-72 &-56 &112&35 &-70 &-6 &12
			\\ \hline 1 &0 &-13&0  &65&0 &-156&0   &182&0  &-91 &0  &13
			\\\hline
	\end{tabular}
\caption{First thirteen polynomials coefficients.}
\label{Tab1}
\end{table}

By simply having a glance to these polynomials, it becomes apparent that the following properties hold:
\begin{enumerate}
\item[(a)] The constant terms of $p_n(A)$ equals $n$.
\item[(b)] The degree of $p_n(A)$ equals $n-1$.
\item[(c)] The leading coefficients of $p_{4n}(A)$ and $p_{4n+1}(A)$ equal $1$, while the
leading coefficients of $p_{4n+2}(A)$ and $p_{4n+3}(A)$ equal $-1$.
\item[(d)] $p_{2n}(2)=0$ for all integer $n\geq 1$. Thus, $p_2|p_{2n}$ for all $n\geq 1$.
\item[(e)] $p_{2n+1}(A)$ is an even function.
\end{enumerate}
Besides these properties, it seems all the coefficients of $p_n(A)$, except the leading one, must be multiples of $n$ if $n$ is a prime integer, though this property will not be used in this paper. The next result shows the property (a).

\begin{Lem}
\label{le4.4}
$ p_n(0)=n $ for all $ n\geq 1$.
\end{Lem}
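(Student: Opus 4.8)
The statement is property (a) from the list, i.e.\ that the constant term of $p_n(A)$ equals $n$, and it is an immediate consequence of facts already established. The plan is to read the value $p_n(0)$ directly off Remark \ref{re4.3}, and then to record the alternative derivation from the recursion \eqref{4.6}, since that is the argument that will later extend to the other coefficient identities (b)--(e).

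\textbf{Primary route.} First I would invoke \eqref{4.8}: at the degenerate parameter value $A=0$ one has $E_n(0,x)\equiv 1$, so the recursion \eqref{3.13} collapses to $\v_n(0,x)=\v_{n-1}(0,x)-1+x$ with $\v_1(0,x)=x-1$, whence $\v_n(0,x)=n(x-1)$ for every $n\in\N$ and every admissible $x$, in particular in a neighbourhood of $x=1$. Since $p_n(A)$ is by definition \eqref{4.2} the partial derivative $\partial_x\v_n(A,x)$ evaluated at $x=1$, and $\v_n$ is jointly real-analytic in $(A,x)$ on its domain, so that $x$-differentiation at $x=1$ commutes with specializing to $A=0$, we obtain $p_n(0)=\partial_x\v_n(0,1)=n$. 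This already settles the lemma.

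\textbf{Alternative route.} To make contact with \eqref{4.6}, recall that for $n\geq 2$ one has $p_n(A)=n+E_1'(1)+\cdots+E_{n-1}'(1)$, while $p_1(A)=1$. It therefore suffices to check that $E_j'(1)\big|_{A=0}=0$ for every $j\geq 1$. But this is visible from \eqref{4.7}: each of the two branches of $E_n'(1)$ carries an explicit factor $A$ in front of a finite sum of terms $E_j(1)+E_j'(1)$, while $E_0'(1)=0$ and $E_1'(1)=-A$. Hence, by the same complete induction used in Lemma \ref{le4.1} to prove $E_n'(1)\in\Z[A]$, every $E_n'(1)$ is divisible by $A$ in $\Z[A]$ and so vanishes at $A=0$; consequently $p_n(0)=n+0+\cdots+0=n$.

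\textbf{Main obstacle.} There is essentially none: the result is a one-line corollary of \eqref{4.8} (equivalently, of the factor $A$ occurring in \eqref{4.7}). The only bookkeeping is the routine induction confirming that the factor $A$ in \eqref{4.7} is never cancelled, and this is already contained in the proof of Lemma \ref{le4.1}.
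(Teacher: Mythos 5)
Your proposal is correct, and your ``alternative route'' is precisely the paper's own proof: by \eqref{4.7} every $E_j'(1)$ carries an explicit factor $A$, so $E_j'(0,1)=0$, and \eqref{4.6} then gives $p_n(0)=n$. Your primary route via \eqref{4.8} (differentiating $\v_n(0,x)=n(x-1)$ at $x=1$, which is legitimate since Remark \ref{re4.3} precedes the lemma) is just an equivalent repackaging of the same fact that all $A$-dependence disappears at $A=0$, so no substantive difference in approach.
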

\begin{proof}
By \eqref{4.7}, $ \frac{dE_n(0,1)}{dx}=0 $. Hence, due to \eqref{4.6}, $p_n(0)=n$ for all $ n\geq 1$.
\end{proof}

The next result establishes the properties (b) and (c).

\begin{Lem}
\label{le4.5}
For every integer $n\geq 1$, $ {\rm{deg}}(p_n)=n-1 $. Moreover, the leading coefficients of $ p_n $ equal $ 1 $ if $ n\in4\N\cup (4\N+1) $ and $ -1 $ if $ n\in (4\N+2)\cup (4\N+3) $.
\end{Lem}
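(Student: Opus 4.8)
\emph{Proof proposal.} The plan is a strong induction resting on the closed form \eqref{4.6}, namely $p_n(A)=n+E_1'(1)+E_2'(1)+\cdots+E_{n-1}'(1)$ for $n\ge 2$ (together with $p_1(A)=1$), combined with the recursion \eqref{4.7} for the numbers $e_k:=E_k'(1)$, all of which are polynomials in $A$ by Lemma~\ref{le4.1}. Since $e_0=0$ and $E_j(1)=1$ for every $j\ge 0$ by \eqref{4.4}, the recursion \eqref{4.7} simplifies to $e_n=A\sum_{j\ \mathrm{odd},\,1\le j\le n-1}(1+e_j)$ when $n$ is even and to $e_n=-A\sum_{j\ \mathrm{even},\,0\le j\le n-1}(1+e_j)$ when $n$ is odd.

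\emph{Step 1.} I would first prove, by strong induction on $n\ge 1$, that ${\rm deg}\,e_n=n$ and that the leading coefficient $\ell_n$ of $e_n$ satisfies $\ell_1=-1$, $\ell_n=\ell_{n-1}$ for even $n\ge 2$, and $\ell_n=-\ell_{n-1}$ for odd $n\ge 3$; unwinding this two-step alternation yields $\ell_n=-1$ for $n\equiv 1,2\ (\mathrm{mod}\ 4)$ and $\ell_n=+1$ for $n\equiv 0,3\ (\mathrm{mod}\ 4)$. The base case is $e_1=-A$. For even $n\ge 2$, the inductive hypothesis gives ${\rm deg}\big(A(1+e_j)\big)=j+1\le n$ for each odd $j$ with $1\le j\le n-1$, and the maximal value $n$ is attained by the single summand $j=n-1$ (which is odd, hence present in the sum); thus there is no cancellation in top degree, ${\rm deg}\,e_n=n$, and $\ell_n$ equals the leading coefficient of $A\,e_{n-1}$, i.e. $\ell_n=\ell_{n-1}$. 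For odd $n\ge 3$ the argument is identical once one notes that the $j=0$ summand contributes only $-A$ (degree $1$) and the top degree $n$ is again supplied solely by $j=n-1$ (even, and $\ge 2$ since $n\ge 3$); here $\ell_n$ is the leading coefficient of $-A\,e_{n-1}$, so $\ell_n=-\ell_{n-1}$.

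\emph{Step 2.} Feeding Step~1 into \eqref{4.6}, for $n\ge 2$ the sum $n+e_1+\cdots+e_{n-1}$ has a unique top-degree contribution, namely $e_{n-1}$ of degree $n-1$, because every other $e_j$ has degree $j\le n-2$ and the constant $n$ has degree $0<n-1$. Hence ${\rm deg}\,p_n=n-1$ and the leading coefficient of $p_n$ equals $\ell_{n-1}$. Reading off residues modulo $4$, $\ell_{n-1}=+1$ exactly when $n-1\equiv 0,3\ (\mathrm{mod}\ 4)$, i.e. $n\in 4\N\cup(4\N+1)$, while $\ell_{n-1}=-1$ exactly when $n\in(4\N+2)\cup(4\N+3)$; and the case $n=1$, where $p_1=1$ has degree $0$ and leading coefficient $1$, is consistent with $1\in 4\N+1$. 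This establishes both assertions.

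The only point that needs care — and the main, rather mild, obstacle — is verifying that no cancellation occurs at top degree inside the sums \eqref{4.7}: one must be sure the highest-degree monomial there is produced by the single largest index $j=n-1$ and by no other summand, which is exactly what the parity of the summation range guarantees (the index, which runs over those of parity opposite to that of $n$, attains its maximum precisely at $n-1$). Everything else is bookkeeping of residues modulo $4$. As an alternative, once the three-term recursion \eqref{1.8} is available one may skip Step~1 altogether: induction on \eqref{1.8} gives at once that ${\rm deg}\,p_n=n-1$ and that the leading coefficient of $p_n$ is $(-1)^{n+1}$ times that of $p_{n-1}$, which, starting from $p_1=1$ and $p_2=2-A$, reproduces the same modulo-$4$ pattern.
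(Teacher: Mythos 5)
Your proposal is correct and follows essentially the same route as the paper: both rest on the closed form \eqref{4.6} together with the recursion \eqref{4.7} (simplified via $E_j(1)=1$), proving by induction that ${\rm deg}\,E_n'(A,1)=n$ so that ${\rm deg}\,p_n=n-1$ and $\ell(p_n)=\ell(E_{n-1}'(A,1))$, and then tracking the leading coefficients of the $E_n'$'s. The only difference is bookkeeping — you propagate the sign with a one-step parity rule ($\ell_n=\ell_{n-1}$ for $n$ even, $\ell_n=-\ell_{n-1}$ for $n$ odd), while the paper advances in blocks of four starting from $\ell(p_5)=1$ — and your explicit remark that no top-degree cancellation can occur in \eqref{4.7} is a welcome point the paper leaves implicit.
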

\begin{proof} By the proof of Lemma \ref{le4.1}, we already know that
\[
  E'_n(A,1):=\frac{d E_n}{dx} (A,1)
\]
is a polynomial in $A$ for all integer $n\geq 1$. Next, we will show that it has degree $ n $. To prove it, a complete induction argument will be used. According to \eqref{3.10}, we already know that
\[		
   {\rm{deg}}(E_0'(A,1))={\rm{deg}}(0)=0  \quad\hbox{and}\quad  {\rm{deg}}(E_1'(A,1))={\rm{deg}}(-A)=1.
\]
As the induction assumption, assume that
\[
   {\rm{deg}}(E_j'(A,1))=j \quad \hbox{for all}\; \; j<n.
\]
Then, owing to \eqref{4.7},  it follows that
\begin{equation}
\label{4.9}
  {\rm{deg}}(E_n'(A,1))=n,\qquad n\geq 0.
\end{equation}
Therefore, by \eqref{4.6},
\[
   {\rm{deg}}(p_n)=n-1.
\]
Subsequently, for any given polynomial, $q \in \Z[A]$, we will denote by $\ell(q)$ the leading coefficient
of $q(A)$. According to Table \ref{Tab1}, we already know that
\[
 \ell(p_5)=1.
\]
As an induction hypothesis, assume that
\begin{equation}
\label{4.10}
  \ell(p_{4(n-1)+1})=1.
\end{equation}
By \eqref{4.6}, \eqref{4.7} and \eqref{4.9}
\begin{align*}
\ell(p_{4n-2}) & = {\color{blue} \ell(E_{4n-3}'(A,1))}=-\ell(E'_{4(n-1)}(A,1))=-\ell(p_{4(n-1)+1}),\\
\ell({p_{4n-1}}) & ={\color{brown} \ell({E_{4n-2}'(A,1)})} ={\color{blue} \ell({E_{4n-3}'(A,1)})}=-\ell(p_{4(n-1)+1}),\\
\ell({p_{4n}}) & ={\color{cyan} \ell({E_{4n-1}'(A,1)})}=-{\color{brown} \ell({E_{4n-2}'(A,1)})}=\ell(p_{4(n-1)+1}),\\
\ell({p_{4n+1}}) & =\ell({E_{4n}'(A,1)})={\color{cyan} \ell({E_{4n-1}'(A,1)})}=\ell(p_{4(n-1)+1}).\\
\end{align*}
By \eqref{4.10}, the proof is complete.
\end{proof}

As a consequence of these lemmas, the next result holds.

\begin{Prop}
\label{pr4.6}
Suppose \eqref{3.5}. Then, the problem \eqref{1.1} possesses infinitely many subharmonics. In other words, there exists a sequence of integers $\{n_m\}_{m\geq 1}$ with
\[
  \lim_{m\to +\infty} n_m =+\infty,
\]
such that \eqref{1.1} has at least a $n_mT$-periodic coexistence state for every $m\geq 1$.
\end{Prop}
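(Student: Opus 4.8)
The plan is to exploit the interplay between the sign of $p_n(0)=n>0$ (Lemma \ref{le4.4}) and the behavior of $p_n(A)$ as $A\to+\infty$, which is governed by its leading term via Lemma \ref{le4.5}. Concretely, for those $n$ with $n\in(4\N+2)\cup(4\N+3)$ the leading coefficient of $p_n$ is $-1$, so $p_n(A)\to-\infty$ as $A\to+\infty$; since $p_n(0)=n>0$, the polynomial $p_n(A)$ must vanish at some $A_n>0$. This produces a positive root $A_n$ of $p_n$ for every such $n$, and since there are infinitely many integers in $(4\N+2)\cup(4\N+3)$ — take, say, $n_m:=4m+2$ — we obtain a sequence $n_m\to+\infty$ together with parameter values $A_{n_m}>0$.

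**Next I would** invoke the bifurcation-theoretic content recorded in Remark \ref{re4.2} and the surrounding discussion: a positive root $A_n$ of $p_n(A)=\mathfrak{L}(n;A)$ is precisely a value of the parameter where the linearization $\frac{d\v_n(A,1)}{dx}$ degenerates, and by the Crandall--Rabinowitz theorem \cite{CR71} (whose transversality hypothesis is the algebraic simplicity of the roots, to be established in Section 5, but which can here be circumvented — see below) a branch of nontrivial zeros of $\v_n(A,x)$ emanates from $(A_n,1)$. Each such nontrivial zero $x\ne1$ of $\v_n(A,x)$ is, by \eqref{3.8} and the construction of the Poincar\'e maps in Proposition \ref{pr3.2}, a positive fixed point of $\mc{P}_n$, hence an $nT$-periodic coexistence state of \eqref{1.1}. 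Running this over $m\ge1$ with $n_m=4m+2$ yields the claimed infinitude of subharmonics. Alternatively — and more cleanly, avoiding any appeal to local bifurcation theory — one argues directly: by Remark \ref{re3.4}, $\v_n(0)=-n<0$ and $\v_n(n)>0$, so $\v_n$ always has a zero in $(0,n)$; the point is to produce, for infinitely many $n$, a zero \emph{other than} $x=1$. Here I would use that $p_n(A)=\v_n'(A,1)<0$ when $A>A_n$ (for $n=4m+2$, since the leading coefficient is $-1$ one can even show $p_n<0$ for all large $A$), which forces $\v_n$ to dip below $0$ just to the right of $x=1$; combined with $\v_n(n)>0$ this yields a second zero in $(1,n)$, hence a genuine $nT$-subharmonic for that parameter value.

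**The main obstacle** is the delicate point that a zero $x\ne1$ of $\v_n$ need not have \emph{minimal} period $n$ — by \eqref{4.1}, every zero of $\v_d$ with $d\mid n$ is also a zero of $\v_n$ — so one must be careful that "subharmonic" here is meant in the weak sense (an $nT$-periodic coexistence state, not necessarily of minimal period $nT$), which is exactly how the statement of Proposition \ref{pr4.6} is phrased ("at least a $n_mT$-periodic coexistence state"). Thus the honest content of the proposition is only the \emph{existence of nontrivial $nT$-periodic coexistence states for infinitely many $n$}, and the sign argument above delivers precisely that. The secondary technical matter is confirming that the branch produced by Crandall--Rabinowitz (or the zero produced by the intermediate value argument) genuinely consists of componentwise-positive solutions $x>0$ with $x\ne1$; this is immediate from Remark \ref{re3.4}, since the zeros of $\v_n$ all lie in the interval $(0,n)$ and we have exhibited one strictly separated from $1$. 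I would therefore present the argument in the streamlined form: fix $n_m:=4m+2$; by Lemma \ref{le4.4} $p_{n_m}(0)=n_m>0$ and by Lemma \ref{le4.5} $\deg p_{n_m}=n_m-1$ with leading coefficient $-1$, so $p_{n_m}$ has a positive root $A_m$; at $A=A_m$ (or at any slightly larger value, invoking the sign of $p_{n_m}$ there together with $\v_{n_m}(n_m)>0$ from Remark \ref{re3.4}) the function $\v_{n_m}(A,\cdot)$ has a zero $x\ne1$ in $(0,n_m)$, which by Proposition \ref{pr3.2} and \eqref{3.8} is a positive fixed point of $\mc{P}_{n_m}$, i.e.\ an $n_mT$-periodic coexistence state of \eqref{1.1}; since $n_m\to+\infty$, the proof is complete.
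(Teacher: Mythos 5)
Your proposal is correct, and its first half is exactly the paper's opening move: for $n\in(4\N+2)\cup(4\N+3)$, Lemma \ref{le4.4} gives $p_n(0)=n>0$ while Lemma \ref{le4.5} gives leading coefficient $-1$ and degree $n-1$, so $p_n$ has a positive root at which it changes sign. Where you genuinely diverge is the concluding step. The paper stays at that sign-changing root: the first non-vanishing derivative of $p_n$ there is of odd order, so the algebraic multiplicity of \cite{ELG87} is odd, the local index of $\v_n(A,\cdot)$ changes as $A$ crosses the root (Theorem 5.6.2 of \cite{LG01}), and Theorem 6.2.1 of \cite{LG01} then produces a component of nontrivial $nT$-periodic solutions bifurcating from $(A,1)$ --- note that this needs no simplicity of the root, so your concern that Crandall--Rabinowitz \cite{CR71} would be premature at this stage is well taken, but the paper circumvents it with odd multiplicity and degree theory rather than with your intermediate value argument. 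Your elementary fallback is nevertheless sound: for $A$ beyond the largest real root of $p_n$ (better to say this than ``slightly larger than $A_m$'', since at $A_m$ itself $p_n(A_m)=0$ gives no sign information), one has $\frac{\p\v_n}{\p x}(A,1)=p_n(A)<0$, so $\v_n(A,\cdot)$ is negative just to the right of $x=1$, while $\v_n(A,n)>0$ by Remark \ref{re3.4}; hence there is a zero $x\neq 1$ in $(1,n)$, i.e.\ a coexistence state of \eqref{1.1} that is not the $T$-periodic one. What the paper's degree-theoretic route buys in exchange for the machinery is a branch of $nT$-periodic solutions attached to the trivial curve at an identified bifurcation value, which is precisely the structure exploited later in Section 6; your IVT zero exists only for each fixed large $A$ and, as you correctly flag, its minimal period is uncontrolled --- for $n_m=4m+2$ and $A>2$ it may simply be one of the $2T$-periodic solutions of Theorem \ref{th2.1} --- but this matches the weak formulation of the statement, and the paper's own proof does not address minimality of the period either.
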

\begin{proof}
Since $p_n(0)=n$ for all $n\in\N$ and, thanks to Lemma \ref{le4.5}, for every integer $n\geq 1$,
\[
  \ell(p_{4n+2})=\ell(p_{4n+3})=-1,
\]
it becomes apparent that $p_{4n+2}(A)$ (resp. $p_{4n+3}(A)$) possesses a root, $A_{4n+2}$ (resp.
$A_{4n+3}$), where it changes of sign. Thus, for every integer $n\geq 1$, there exist two odd integers, $i_n, j_n\geq 1$, for which
\begin{align*}
  p^{k)}_{4n+2}(A_{4n+2}) & =0, \quad 0\leq k \leq i_n-1, \quad  p^{i_n)}_{4n+2}(A_{4n+2}) \neq 0, \\
   p^{k)}_{4n+3}(A_{4n+3}) & =0, \quad 0\leq k \leq j_n-1, \quad  p^{j_n)}_{4n+3}(A_{4n+3}) \neq 0.
\end{align*}
Thus, the algebraic multiplicity of \cite{ELG87} for these polynomials at those roots
is given by
\[
  \chi[p_{4n+2}(A);A_{4n+2}]= i_n, \qquad  \chi[p_{4n+3}(A);A_{4n+3}]= j_n.
\]
As these integers are odd, by Theorem 5.6.2 of \cite{LG01}, the local topological indexes of $p_{4n+2}(A)$ and $p_{4n+3}(A)$ change as $A$ crosses $A_{4n+2}$ and $A_{4n+3}$, respectively. Therefore, by Theorem
6.2.1 of \cite{LG01}, there exist two components of $(4n+2)T$-periodic solutions and $(4n+3)T$-periodic
solutions bifurcating from the trivial solution $(A,1)$ at the roots $A_{4n+2}$ and $A_{4n+3}$, respectively. This ends the proof.
\end{proof}

The next result establishes Property (d).

\begin{Lem}
\label{le4.7}
$ p_2|p_{2n} $ for all $ n\geq 1$. Thus, since $p_2(A)=-A+2$, $r=2$ is a root of $p_{2n}(A)$
for all integer $n\geq 1$.
\end{Lem}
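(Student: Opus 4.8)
The plan is to establish the factorization $p_2\mid p_{2n}$ by showing directly that $r=2$ is a root of $p_{2n}(A)$ for every $n\geq 1$, which suffices since $p_2(A)=-A+2=-(A-2)$ and all the $p_n$ lie in $\Z[A]$ by Lemma \ref{le4.1} (so divisibility by the monic, up to sign, linear factor $A-2$ is equivalent to vanishing at $A=2$). Recall the identity \eqref{4.2}, i.e. $p_n(A)=\frac{d\v_n(A,1)}{dx}$, together with \eqref{4.1}: every $nT$-periodic solution is $2nT$-periodic, so $\v_{2n}(A,x)=0$ whenever $\v_n(A,x)=0$. In particular the trivial curve $x=1$ solves both, but more to the point I want to exploit the relation between $\v_{2n}$ and $\v_n$ — or, failing a clean factorization at the level of the $\v$'s, I will work directly with the recursion.

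The cleanest route is to use the recursion \eqref{1.8}, which by Theorem \ref{th4.8} (invoked in the introduction, and which I am entitled to assume) reads
\[
  p_n(A)=[2-(-1)^nA]\,p_{n-1}(A)-p_{n-2}(A),\qquad n\geq 3,
\]
with $p_1=1$, $p_2=2-A$. Evaluating at $A=2$: for even index $n=2k$ the bracket is $[2-2]=0$, so $p_{2k}(2)=-p_{2k-2}(2)$; for odd index $n=2k+1$ the bracket is $[2+2]=4$, so $p_{2k+1}(2)=4p_{2k}(2)-p_{2k-1}(2)$. From $p_{2k}(2)=-p_{2k-2}(2)$ and $p_2(2)=0$ an immediate induction gives $p_{2k}(2)=0$ for all $k\geq 1$. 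That is exactly the claim. (As a consistency check, the odd recursion at $A=2$ then collapses to $p_{2k+1}(2)=-p_{2k-1}(2)$, giving $p_{2k+1}(2)=(-1)^k p_1(2)=(-1)^k$, matching Property (e) and the explicit list: $p_3(2)=-1$, $p_5(2)=1$, $p_7(2)=-1$, etc.)

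If instead one prefers to avoid citing the recursion \eqref{1.8} and argue from the material already proved in Section 3–4, I would use \eqref{4.6}, $p_n(A)=n+\sum_{j=1}^{n-1}E'_j(A,1)$, together with \eqref{4.7}. Set $c_j:=E'_j(2,1)$. From \eqref{4.4} we have $E_j(1)=1$, so \eqref{4.7} at $A=2$ gives $c_n=2\sum_{j\ \mathrm{odd},\,j\leq n-1}(1+c_j)$ for even $n$ and $c_n=-2\sum_{j\ \mathrm{even},\,j\leq n-1}(1+c_j)$ for odd $n$. A short induction on these partial sums shows that $\sum_{j=1}^{2k-1}c_j=-2k$ for all $k\geq 1$, whence $p_{2k}(2)=2k+\sum_{j=1}^{2k-1}c_j=2k-2k=0$. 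Either way the computation is elementary; the only "obstacle" is choosing which of the two bookkeeping schemes to present, and I would go with the three-term recursion since it is the shortest and the telescoping $p_{2k}(2)=-p_{2k-2}(2)$ is transparent. Finally, from $p_{2n}(2)=0$ and $p_{2n}\in\Z[A]$, polynomial division by $A-2$ leaves no remainder, so $p_2(A)=-(A-2)$ divides $p_{2n}(A)$, completing the proof.
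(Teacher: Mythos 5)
Your proof is correct, but it follows a genuinely different route from the paper. The paper disposes of Lemma \ref{le4.7} with a soft, dynamical argument placed \emph{before} Theorem \ref{th4.8}: by \eqref{4.1} every $2T$-periodic solution is a $2nT$-periodic solution, so the unique bifurcation value from $(A,1)$ to $2T$-periodic solutions, namely the root $A=2$ of $p_2$, must also be a bifurcation value to $2nT$-periodic solutions, forcing $p_{2n}(2)=0$; this leans on the bifurcation analysis of Section 2 rather than on any polynomial identity. Your main route is instead purely algebraic: evaluate the three-term recursion of Theorem \ref{th4.8} at $A=2$, where the even-index bracket $2-(-1)^{2k}A$ vanishes, so $p_{2k}(2)=-p_{2k-2}(2)$ telescopes from $p_2(2)=0$. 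This is legitimate and rigorous, because the proof of Theorem \ref{th4.8} rests only on \eqref{4.4}--\eqref{4.7}, \eqref{4.11} and \eqref{4.17} and nowhere uses Lemma \ref{le4.7}, so there is no circularity; but since in the paper the lemma precedes Theorem \ref{th4.8}, you should state this independence explicitly (or present the lemma after the theorem). Your fallback computation with $c_j:=E_j'(2,1)$ has the merit of staying entirely within the material available at that point of the text, though the key induction giving $\sum_{j=1}^{2k-1}c_j=-2k$ is only asserted, not carried out (it does go through: at $A=2$ the values $c_j$ cycle as $-2,-2,0,0,\dots$). As for what each approach buys: the paper's argument is one line and exhibits the structural reason behind the divisibility, the same reasoning that later gives $p_n\mid p_{kn}$ in Remark \ref{re6.2}; your recursion argument is elementary and self-contained at the level of the polynomials, and yields as a bonus the exact values $p_{2k+1}(2)=(-1)^k$.
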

\begin{proof} By \eqref{4.1}, any $2T$-periodic solution is a $2nT$-periodic solution for all
$n\geq 1$. Thus, any bifurcation point from $(A,1)$ to $2T$-periodic solutions must be a
bifurcation point to $2nT$-periodic solutions. Since the unique bifurcation value to $2T$-periodic
solutions is the root of $p_2(A)=-A+2$, given by $r=2$, it becomes apparent that $p_{2n}(2)=0$
for all integer $n\geq 1$. Therefore, $p_2|p_{2n}$ for all $n\geq 1$. This ends the proof.
\end{proof}

The next list of polynomials, collecting $p_{2n+1}(A)$ and $\frac{p_{2n}(A)}{2-A}$, for $1 \leq n \leq 6$,
might be helpful to understand the (very sharp) identity established by the next result.

\begin{align*}
	\dfrac{p_2(A)}{2-A}&=1\\
	p_3(A)&=-A^2+3\\
	\dfrac{p_4(A)}{2-A}&=-A^2+2\\
	p_5(A)&=A^4-5A^2+5\\
	\dfrac{p_6(A)}{2-A}&=A^4-4A^2+3\\
	p_7(A)&=-A^6+7A^4-14A^2+7\\
	\dfrac{p_8(A)}{2-A}&=-A^6+6A^4-10A^2+4\\
	p_9(A)&=A^8-9A^6+27A^4-30A^2+9\\
	\dfrac{p_{10}(A)}{2-A}&=A^8-8A^6+21A^4-20A^2+5\\
	p_{11}(A)&=-A^{10}+11A^8-44A^6+77A^4-55A^2+11\\
	\dfrac{p_{12}(A)}{2-A}&=-A^{10}+10A^8-36A^6+56A^4-35A^2+6\\
	p_{13}(A)&=A^{12}-13A^{10}+65A^8-156A^6+182A^4-91A^2+13.
\end{align*}
\vspace{0.2cm}

\begin{Thm}
\label{th4.8} The following identity holds
\[
  \dfrac{p_n(A)}{2-A}=p_{n-1}(A) - \dfrac{p_{n-2}(A)}{2-A}
\]
for all $n\in2\N$, whereas
\[
  \dfrac{p_n(A)}{2+A}=p_{n-1}(A) - \dfrac{p_{n-2}(A)}{2+A}
\]
for all $n\in 2\N+1$.
\end{Thm}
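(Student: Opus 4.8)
The plan is to recognize that Theorem \ref{th4.8} is equivalent to the single three-term recursion
\[
  p_n(A) = \bigl[2-(-1)^n A\bigr]\,p_{n-1}(A) - p_{n-2}(A), \qquad n\ge 3,
\]
which for even $n$ becomes the first displayed identity after dividing by $p_2(A)=2-A$, and for odd $n$ becomes the second after dividing by $2+A$. For even $n$ the division stays inside $\Z[A]$ because $p_2\mid p_n$ and $p_2\mid p_{n-2}$ by Lemma \ref{le4.7}; for odd $n$, $2+A$ need not divide $p_{n-2}$ (e.g. $p_3(-2)=-1$), so the stated identity is to be read as an identity of rational functions, equivalent to the polynomial recursion after clearing denominators. (This recursion is exactly formula \eqref{1.8} of the Introduction.)

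To derive the recursion I would pass to its second-difference form. Using \eqref{4.5} at the indices $n$ and $n-1$,
\[
  p_n-2p_{n-1}+p_{n-2}=(p_n-p_{n-1})-(p_{n-1}-p_{n-2})=E'_{n-1}(1)-E'_{n-2}(1),
\]
so the recursion is equivalent to $E'_{n-1}(1)-E'_{n-2}(1)=(-1)^{n+1}A\,p_{n-1}(A)$. Writing $m=n-1$ (so $(-1)^{n+1}=(-1)^m$), it therefore suffices to prove the auxiliary identity
\[
  E'_m(1)-E'_{m-1}(1)=(-1)^m A\,p_m(A)\qquad\text{for all integers }m\ge 2 .
\]

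The core of the proof is this auxiliary identity, which I would establish by a direct computation from \eqref{4.7}, using $E_j(1)=1$ from \eqref{4.4}. Suppose first that $m$ is even. By \eqref{4.7}, $E'_m(1)=A\sum_{j}\bigl(1+E'_j(1)\bigr)$, the sum over odd $j\in\{1,\dots,m-1\}$; and since $m-1$ is odd, $E'_{m-1}(1)=-A\sum_{j}\bigl(1+E'_j(1)\bigr)$, the sum over even $j\in\{0,\dots,m-2\}$. The decisive (purely combinatorial) observation is that the odd integers up to $m-1$ and the even integers up to $m-2$ are disjoint and together exhaust $\{0,1,\dots,m-1\}$; hence
\[
  E'_m(1)-E'_{m-1}(1)=A\sum_{j=0}^{m-1}\bigl(1+E'_j(1)\bigr)=A\Bigl(m+\sum_{j=1}^{m-1}E'_j(1)\Bigr)=A\,p_m(A),
\]
where the middle step uses $E'_0(1)=0$ and the last step is \eqref{4.6}; since $(-1)^m=1$ this is the claim. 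The case $m\in 2\N+1$ is symmetric: now $E'_m(1)$ carries the prefactor $-A$ and its sum runs over even $j\le m-1$, while $E'_{m-1}(1)$ carries $+A$ and its sum runs over odd $j\le m-2$, and the same exhaustion-of-indices observation gives $E'_m(1)-E'_{m-1}(1)=-A\,p_m(A)=(-1)^m A\,p_m(A)$. (As a sanity check, for $m=2$ one has $E'_2(1)-E'_1(1)=(A-A^2)-(-A)=A(2-A)=A\,p_2(A)$.)

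I do not anticipate a genuine obstacle. The only delicate point is the index bookkeeping in the last paragraph — checking, according to the parity of $m$, that the odd-index set occurring in one of the two terms is exactly the complement within $\{0,\dots,m-1\}$ of the even-index set occurring in the other. Everything else reduces mechanically to the previously established recursions \eqref{4.5}, \eqref{4.6} and \eqref{4.7}, so once the auxiliary identity is in hand, dividing by $2-A$ or $2+A$ finishes the proof.
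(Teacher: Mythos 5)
Your proposal is correct, and it takes a genuinely different route from the paper. The paper first establishes, by induction, the auxiliary identity \eqref{4.11}, which expresses the parity-restricted sums of the $E_j'(A,1)$ through the alternating sums $\sum_{j=3}^{n+1}(-1)^jp_j$, and then combines \eqref{4.11} with \eqref{4.17} in a rather long telescoping chain, handled separately for $n$ even and $n$ odd, to reach $p_n=[2-(-1)^nA]\,p_{n-1}-p_{n-2}$. You instead reduce the theorem to the second-difference identity $p_n-2p_{n-1}+p_{n-2}=E'_{n-1}(1)-E'_{n-2}(1)$, immediate from \eqref{4.5}, and then verify the single identity $E'_m(1)-E'_{m-1}(1)=(-1)^m A\,p_m(A)$, $m\geq 2$, directly from \eqref{4.7}, \eqref{4.4} and \eqref{4.6}: the two consecutive derivatives carry sums over complementary parity classes which together exhaust $\{0,1,\dots,m-1\}$, so their difference is $\pm A\sum_{j=0}^{m-1}\bigl[1+E'_j(1)\bigr]=\pm A\,p_m(A)$. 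Your sign and index bookkeeping is correct in both parity cases, and your reading of the divisions (exact in $\Z[A]$ for $n$ even via Lemma \ref{le4.7}, as rational functions for $n$ odd) matches how the identity is actually used, e.g.\ in \eqref{4.18}. What you gain is a shorter, induction-free argument that bypasses \eqref{4.11} entirely; what the paper's longer route produces in addition is the explicit identity \eqref{4.11} itself, but since that identity is not invoked elsewhere, your argument is a genuine simplification of the proof of this theorem.
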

\begin{proof}
First, we will prove the next relationships
\begin{equation}
\label{4.11}
		\left\{
		\begin{array}{ll}
		\displaystyle{- \frac{n}{2}-1-\sum_{\substack{j=1\\ j \in 2\N}}^{n} E_{j}'(A,1) =1-A+\sum_{j=3}^{n+1}(-1)^jp_j,}&\quad  n\in 2\N,\\[11pt]
    	\displaystyle{ \left[\frac{n}{2}\right]+1+\sum_{\substack{j=1\\ j \in 2\N+1}}^{n} E_{j}'(A,1) =1-A+\sum_{j=3}^{n+1}(-1)^jp_j,}&\quad  n\in 2\N+1.
    	\end{array}
    	\right.
\end{equation}
Since $p_2(A)=2-A$, particularizing \eqref{4.5} at $n=3$ yields
\[
    	-2-E_2'(A,1)=1-A-p_3(A),
\]
which is \eqref{4.11} for $n=2$. As the induction assumption, assume that \eqref{4.11} holds
for some $n=2m$ with $m\geq 1$, i.e.,
\begin{equation}
\label{4.12}
- m -1-\sum_{\substack{j=1\\ j \in 2\N}}^{2m} E_{j}'(A,1) =1-A+\sum_{j=3}^{2m+1}(-1)^jp_j(A).
\end{equation}
According to \eqref{4.6},
\begin{equation}
\label{4.13}
    	2m+2+E_1'(A,1)+E_2'(A,1)+\cdots+E_{2m}'(A,1)+E_{2m+1}'(A,1)=p_{2m+2}(A).
\end{equation}
Thus, adding \eqref{4.12} and \eqref{4.13}, we obtain that
\begin{equation}
\label{4.14}
   m+1 + \sum_{\substack{j=1\\ j \in 2\N+1}}^{2m+1} E_{j}'(A,1)=1-A+\sum_{j=3}^{2m+2}(-1)^jp_j(A).
\end{equation}
Equivalently,
\begin{equation*}
\left[\frac{2m+1}{2}\right]+1 + \sum_{\substack{j=1\\ j \in 2\N+1}}^{2m+1} E_{j}'(A,1)=1-A+\sum_{j=3}^{2m+2}(-1)^jp_j(A),
\end{equation*}
which shows the validity of \eqref{4.11} for $n=2m+1$. To prove the validity of \eqref{4.11}
for $n=2(m+1)=2m+2$, we can argue similarly. Again by \eqref{4.6},
\begin{equation}
\label{4.15}
    	2m+3+E_1'(A,1)+E_2'(A,1)+\cdots+E_{2m+1}'(A,1)+E_{2m+2}'(A,1)=p_{2m+3}(A).
\end{equation}
Hence, subtracting \eqref{4.15} from \eqref{4.14} yields
\begin{equation}
\label{4.16}
  -m-2 - \sum_{\substack{j=1\\ j \in 2\N}}^{2m+2} E_{j}'(A,1)=1-A+\sum_{j=3}^{2m+3}(-1)^jp_j(A).
\end{equation}
Since
\[
  -\frac{2m+2}{2}-1 =-m-2,
\]
\eqref{4.16} provides us with \eqref{4.11} for $n=2m+2$, which ends the proof of \eqref{4.11}.
\par
By \eqref{4.4}, it follows from \eqref{4.5} and \eqref{4.7} that
\begin{equation}
\label{4.17}
    	p_n(A)=\left\{
    	\begin{array}{ll}
    	\displaystyle{p_{n-1}(A)+1+A\Big[-\dfrac{n}{2}-\sum_{\substack{j=1\\j\in2\N}}^{n-2}E_{j}'(A,1)\Big]} & \quad \hbox{if\ } n\in2\N,
    	\\[15pt]
    	\displaystyle{p_{n-1}(A) + 1 + A\Big[\dfrac{n-1}{2}+\sum_{\substack{j=1\\j\in2\N+1}}^{n-2}E_{j}'(A,1)
    \Big]} &\quad \hbox{if\ } n \in 2\N+1.    	\end{array}
    	\right.
\end{equation}
On the other hand, when $n\in 2\N$, it follows from \eqref{4.11} and \eqref{4.17} that
\begin{equation*}
\begin{split}
 p_n(A)-p_{n-1}(A)&=1+A\Big[-\dfrac{n}{2}-\sum_{\substack{j=1\\j\in2\N}}^{n-2}E_{j}'(A,1) \Big]\nonumber\\ &=
 1+A\Big[-\dfrac{n-2}{2}-1-\sum_{\substack{j=1\\j\in2\N}}^{n-2}E_{j}'(A,1) \Big]\nonumber  \\ & =
 1+A\Big[ 1-A+\sum_{j=3}^{n-1}(-1)^jp_j\Big]\nonumber\\&=1-Ap_{n-1}(A)+A\Big[1-A+\sum_{j=3}^{n-2}(-1)^jp_j\Big]
 \nonumber\\&=-Ap_{n-1}(A)+1+A\Big[\displaystyle{\dfrac{n-2}{2}+\sum_{\substack{j=1\\ j\in 2\N+1}}^{n-3} E_{j}'(A,1)}\Big]\\&=-Ap_{n-1}(A)+p_{n-1}(A)-p_{n-2}(A).
\end{split}
\end{equation*}
Therefore, for every $n\in 2\N$,
\begin{equation*}
    	p_n(A)= (2-A)p_{n-1}(A)-p_{n-2}(A).
\end{equation*}
The proof is complete for $n$ even. Subsequently, we assume that $n$ is odd.  Arguing as in the previous case, from \eqref{4.17} and \eqref{4.11} the following chain of identities holds
\begin{align*}
p_n(A)-p_{n-1}(A)& = 1+A\Big[\dfrac{n-1}{2}+\sum_{\substack{j=1\\j\in2\N+1}}^{n-2}E_{j}'(A)\Big]\\ &=1+A\Big[1-A+\sum_{j=3}^{n-1}(-1)^jp_j\Big]\\
&=1+ Ap_{n-1}(A)+A\Big[1-A+\sum_{j=3}^{n-2}(-1)^jp_j\Big]\\
&=Ap_{n-1}(A)+1+ A\Big[-\dfrac{n-1}{2}-\sum_{\substack{j=1\\j\in2\N}}^{n-3}E_{j}'(1)\Big]
\\&=Ap_{n-1}(A)+p_{n-1}(A)-p_{n-2}(A).
\end{align*}
Therefore, for every $n\in 2\N+1$,
\[
		p_n(A) =(2+A)p_{n-1}(A)-p_{n-2}(A).
\]
This ends the proof.
\end{proof}
	
Theorem \ref{th4.8}  can be summarized into the next generalized identity
\begin{equation}
\label{4.18}
	p_n(A)=[2-(-1)^n A]p_{n-1}(A)-p_{n-2}(A),\qquad n\in\N.
\end{equation}
As a by-product of these identities, the next result, establishing Property (e) at the beginning of the section, holds.
\begin{Cor}
\label{co4.9}
For every $n\geq 1$, the polynomials $\dfrac{p_{2n}(A)}{2-A}$ and $p_{2n+1}(A)$ are even.
\end{Cor}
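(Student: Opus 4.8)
The plan is to argue by a straightforward induction on $n$ using the recursion \eqref{4.18}, keeping track of the parity of the two families $q_{2n}(A):=\tfrac{p_{2n}(A)}{2-A}$ and $p_{2n+1}(A)$ simultaneously. First I would record the base cases: from the explicit list, $q_2(A)=1$ and $p_3(A)=-A^2+3$ are both even. For the inductive step I would exploit the two clean identities of Theorem \ref{th4.8}, namely
\[
  q_{2n}(A)=p_{2n-1}(A)-q_{2n-2}(A),\qquad p_{2n+1}(A)=(2+A)p_{2n}(A)-p_{2n-1}(A).
\]
The first of these immediately shows that $q_{2n}$ is even provided $p_{2n-1}$ and $q_{2n-2}$ are even, so the evenness of the $q$'s is inherited from the evenness of the odd-indexed $p$'s together with the previous $q$. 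Hence the whole statement reduces to showing that $p_{2n+1}(A)$ is even for all $n\geq 1$.

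For the odd-indexed polynomials I would rewrite $p_{2n}(A)=(2-A)q_{2n}(A)$ and substitute into $p_{2n+1}(A)=(2+A)p_{2n}(A)-p_{2n-1}(A)$, obtaining
\[
  p_{2n+1}(A)=(2+A)(2-A)q_{2n}(A)-p_{2n-1}(A)=(4-A^2)q_{2n}(A)-p_{2n-1}(A).
\]
Since $4-A^2$ is even, and since by the induction hypothesis both $q_{2n}(A)$ (just handled) and $p_{2n-1}(A)$ are even, it follows at once that $p_{2n+1}(A)$ is even. This closes the induction: assuming $q_{2n-2}$ and $p_{2n-1}$ even, we first deduce $q_{2n}$ even from the first displayed identity, and then $p_{2n+1}$ even from the second. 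Running the two deductions in this order, the parity propagates from the pair $(q_2,p_3)$ through all $n$.

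I do not expect any genuine obstacle here; the only point requiring a modicum of care is organizing the induction so that at stage $n$ one has available the evenness of $q_{2n-2}$, $p_{2n-1}$ \emph{and} $q_{2n}$ before concluding evenness of $p_{2n+1}$, which is exactly why the two identities of Theorem \ref{th4.8} must be applied in the order "$q$ first, then $p$". One should also note that the factor $2-A$ in the denominator is harmless: by Lemma \ref{le4.7} it genuinely divides $p_{2n}(A)$ in $\Z[A]$, so $q_{2n}$ is a bona fide polynomial and the manipulations above take place entirely within $\Z[A]$.
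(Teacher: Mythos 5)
Your proof is correct and follows essentially the same route as the paper: a simultaneous induction on the pair $\bigl(\tfrac{p_{2n}}{2-A},\,p_{2n+1}\bigr)$ using the two identities of Theorem \ref{th4.8}, with the key rewriting $(2+A)p_{2n}(A)=(4-A^2)\tfrac{p_{2n}(A)}{2-A}$ to handle the odd-indexed case. Your extra remark that Lemma \ref{le4.7} guarantees $\tfrac{p_{2n}(A)}{2-A}\in\Z[A]$ is a welcome precision, but the argument itself is the paper's.
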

\begin{proof}
We already know that
\[
\frac{p_2(A)}{2-A}=1\qquad\hbox{and}\qquad p_3(A)=-A^2+3.
\]	
Arguing by induction, assume that  $\dfrac{p_{2m-2}(A)}{2-A}$  and $p_{2m-1}(A)$ are even polynomials for some $m\geq1$. Then, by \eqref{4.18},
\[
\frac{p_{2m}(A)}{2-A}=p_{2m-1}(A)-\frac{p_{2m-2}(A)}{2-A}
\]
must be also even, because it is sum of two even functions. Similarly, since  $p_{2m+1}$ can be expressed in the form
\[
p_{2m+1}(A)=(2+A)p_{2m}(A)-p_{2m-1}(A)=(4-A^2)\dfrac{p_{2m}(A)}{2-A}-p_{2m-1}(A),
\]
it becomes apparent that $p_{2m+1}(A)$ is also an even polynomial. The proof is completed.
\end{proof}

\section{Characterizing the bifurcation points from $(A,1)$}
	
The following definition will be used in the statement of the main theorem of this section.
	
\begin{Def}
\label{de5.1}
Given two arbitrary polynomials $q_1, q_2\in \Z[A]$, it is said that the roots of $q_1$ are separated by the roots of $q_2$ if all the roots of $q_2$ lye in between the maximal and minimal roots of $q_1$ and any pair of consecutive roots of $q_2$ contains exactly one root of $q_1$.
\end{Def}
	
The main theorem of this section can be stated as follows. It counts the number of roots of each of the polynomials $p_n(A)$, $n\geq 1$, establishing that there are as many roots as indicated by the degree, that all of them are real and algebraically simple and that the positive roots of $p_{n+1}(A)$ are always separated by the positive roots (less than $2$ if $n\in2\N$) of $p_n(A)$. So, it counts all roots establishing their relative positions.
	
\begin{Thm}
\label{th5.2}
For every $n\geq 2$, the positive roots of $p_{2n}(A)$ are separated by the positive roots of $p_{2n-1}(A)$, and the positive roots of $p_{2n+1}(A)$ are separated by those of $p_{2n}(A)$ less than $2$. Moreover, for every $n\geq 1$, the even polynomials  $\tfrac{p_{2n}(A)}{2-A}$ and $p_{2n-1}(A)$ have (exactly) $n-1$ positive roots. Thus, since they are even with degree $2n-2$, they must have another
$n-1$ negative roots and, therefore, all roots are real and simple.
\end{Thm}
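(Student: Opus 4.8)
The plan is to split the sequence $(p_n)_{n\ge1}$ into its ``odd'' part and its ``reduced even'' part, observe that these two sequences together obey a single Chebyshev-type three-term recurrence, and then read off every root from the resulting closed trigonometric form. Put
\[
  P_k := p_{2k-1}, \qquad Q_k := \frac{p_{2k}}{2-A},
\]
which are genuine polynomials by Lemma~\ref{le4.7}, with $P_1 = Q_1 = 1$. By Theorem~\ref{th4.8} (the even case) one has $Q_k = P_k - Q_{k-1}$, i.e.\ $P_k = Q_k + Q_{k-1}$; and the odd-index case of \eqref{4.18}, namely $p_{2k+1} = (2+A)\,p_{2k} - p_{2k-1}$, becomes, after substituting $p_{2k} = (2-A)Q_k$, the relation $P_{k+1} = (4-A^2)Q_k - P_k$. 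Eliminating $P$ between these two relations yields the single three-term recurrence
\[
  Q_{k+1} = (2-A^2)\,Q_k - Q_{k-1}, \qquad Q_1 = 1, \quad Q_0 := 0,
\]
the value $Q_0 = 0$ being forced by $Q_2 = p_4/(2-A) = 2-A^2$, together with $P_k = Q_k + Q_{k-1}$.

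Next I would evaluate these polynomial identities at $A = 2\sin\theta$, under which $2 - A^2 = 2\cos 2\theta$: the numbers $a_k := Q_k(2\sin\theta)$ satisfy $a_{k+1} = 2\cos 2\theta\, a_k - a_{k-1}$ with $a_0 = 0$, $a_1 = 1$, whence $Q_k(2\sin\theta) = \sin(2k\theta)/\sin(2\theta)$, and then, by a sum-to-product identity,
\[
  P_k(2\sin\theta) = Q_k(2\sin\theta) + Q_{k-1}(2\sin\theta)
   = \frac{\sin 2k\theta + \sin 2(k-1)\theta}{\sin 2\theta}
   = \frac{2\sin((2k-1)\theta)\cos\theta}{2\sin\theta\cos\theta}
   = \frac{\sin((2k-1)\theta)}{\sin\theta}.
\]
Letting $\theta$ range over $(0,\tfrac{\pi}{2})$, so that $A$ ranges over $(0,2)$, one reads off that on $(0,2)$ the polynomial $p_{2k}/(2-A) = Q_k$ vanishes exactly at $A = 2\sin\tfrac{j\pi}{2k}$, $j = 1,\dots,k-1$, while $p_{2k-1} = P_k$ vanishes exactly at $A = 2\sin\tfrac{j\pi}{2k-1}$, $j = 1,\dots,k-1$; by strict monotonicity of $\sin$ on $(0,\tfrac{\pi}{2})$ these are $k-1$ distinct positive numbers in each case. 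Since $P_k$ and $Q_k$ are even (Corollary~\ref{co4.9}) of degree $2k-2$ (Lemma~\ref{le4.5}), their $k-1$ positive roots together with the reflected $k-1$ negative roots already account for the full degree; hence there are no other roots (in particular, none with $|A|\ge2$ and none non-real), and all $2k-2$ of them are real and algebraically simple. This is precisely the counting assertion. It also follows that $p_{2k} = (2-A)Q_k$ has exactly the $k$ simple positive roots $2\sin\tfrac{j\pi}{2k}$, $j = 1,\dots,k$, the largest of which --- at $j = k$ --- is $A = 2$; consequently the positive roots of $p_{2k}$ that are less than $2$ are exactly those with $1 \le j \le k-1$.

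It remains to verify the two interlacing statements, and since $t \mapsto 2\sin t$ is strictly increasing on $(0,\tfrac{\pi}{2}]$, each of them reduces to an elementary inequality between fractions. For ``positive roots of $p_{2n}$ separated by positive roots of $p_{2n-1}$'': the $n$ positive roots of $p_{2n}$ sit at the angles $\tfrac{j\pi}{2n}$, $1\le j\le n$, and the $n-1$ positive roots of $p_{2n-1}$ at the angles $\tfrac{j\pi}{2n-1}$, $1\le j\le n-1$, so the required interlacing is exactly $\tfrac{j}{2n} < \tfrac{j}{2n-1} < \tfrac{j+1}{2n}$ for $1\le j\le n-1$, the left inequality being trivial and the right one equivalent to $j+1<2n$. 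For ``positive roots of $p_{2n+1}$ separated by those of $p_{2n}$ less than $2$'': the $n$ positive roots of $p_{2n+1}$ sit at $\tfrac{j\pi}{2n+1}$, $1\le j\le n$, and the relevant $n-1$ roots of $p_{2n}$ at $\tfrac{j\pi}{2n}$, $1\le j\le n-1$, so the interlacing is $\tfrac{j}{2n+1} < \tfrac{j}{2n} < \tfrac{j+1}{2n+1}$ for $1\le j\le n-1$, again trivial on the left and equivalent to $j<2n$ on the right. The genuinely delicate steps are the algebraic ones of the first two paragraphs: spotting that $P_k = Q_k + Q_{k-1}$ decouples the coupled pair into a single Chebyshev recurrence, and observing that the trigonometric formula --- being a polynomial of the correct degree with the displayed real zeros --- thereby captures \emph{all} the roots, so that ``exactly $n-1$ positive roots'', reality and simplicity all fall out of a degree count rather than from a separate Sturm-type analysis. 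After that, the interlacing is pure bookkeeping with the monotonicity of the sine.
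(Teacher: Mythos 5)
Your proposal is correct, but it proves the theorem by a genuinely different route than the paper. The paper argues by a double induction (passing from $p_{2n}$ to $p_{2n+1}$ and then to $p_{2n+2}$), combining the recurrence of Theorem \ref{th4.8} with the sign and degree information of Lemmas \ref{le4.4} and \ref{le4.5} and the evenness of Corollary \ref{co4.9}: at each step one locates sign changes of $(2\pm A)p_{n-1}-p_{n-2}$ between the inductively interlaced roots, and a final degree count forces realness and simplicity. You instead decouple the pair $P_k=p_{2k-1}$, $Q_k=p_{2k}/(2-A)$ into the single Chebyshev-type recurrence $Q_{k+1}=(2-A^2)Q_k-Q_{k-1}$ and solve it in closed form under $A=2\sin\theta$, obtaining $Q_k(2\sin\theta)=\sin(2k\theta)/\sin(2\theta)$ and $P_k(2\sin\theta)=\sin((2k-1)\theta)/\sin\theta$; counting, realness, simplicity and both interlacing statements then reduce to the monotonicity of $\sin$ on $(0,\tfrac{\pi}{2}]$ plus the same degree count (your elimination and the initial data, including the convention $Q_0=0$, check out against the recurrences (4.18) and the tabulated $p_4$, and the index ranges where Theorem \ref{th4.8} applies are respected). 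What your route buys is strictly more than the statement: the positive roots are identified exactly as $2\sin\tfrac{j\pi}{2n}$ and $2\sin\tfrac{j\pi}{2n-1}$, which not only yields the separation properties in the sense of Definition \ref{de5.1} by trivial fraction comparisons, but also determines the full ordering structure of the set of bifurcation points and shows at once that this set is dense in $[0,2]$ --- a question the Introduction leaves open; it also makes transparent the divisibility relations of Remark \ref{re6.2}. What the paper's inductive approach buys, by contrast, is independence from guessing a closed form: it works directly from the three-term recurrence and sign data, which is why it is phrased as an interlacing induction rather than an explicit diagonalization. As a presentational point, it would be worth stating explicitly that $0$ is not a root (e.g.\ by Lemma \ref{le4.4}) so that the $k-1$ positive roots and their reflections are indeed $2k-2$ distinct roots, but this is immediate and not a gap.
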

\begin{proof} As we have already constructed the  associated polynomials above, it is easily seen that all the thesis of Theorem \ref{th5.2} hold to be true for $2\leq n\leq 6$. This task can be easily accomplished
by simply looking at Figure \ref{Fig2}, where  we have plotted all the positive roots of $p_n(A)$ for $2\leq n\leq 13$. These roots are located in the interval $(0,2]$ and have been represented in abscisas at different levels according to $n$. As inserting in the same interval $(0,2]$ all the zeros of the first $13$ polynomials would not be of any real  help for understanding their fine distribution, we have superimposed them at $13$ different levels, each of them containing the positive roots of each of the polynomials $p_n$, $2\leq n\leq 13$. In total we are representing $42$ roots, though some of them are common roots of
different polynomials as a result of the fact that any $kT$-periodic solution must be a $nkT$-periodic solution for all $n\geq 1$. These common roots have been represented in vertical dashed lines to emphasize that all roots on them share the same abscisa value. In such case, the ordinates provide us with the corresponding value of $n$. By simply having a glance at Figure \ref{Fig2}, it is easily realized how the two roots of the polynomial $p_4$ are separated by the root of $p_3$, the $3$ roots of $p_6$ are separated by the $2$ roots of $p_5$, the $4$ roots of $p_8$ are separated by the $3$ of $p_7$, and so on... Similarly, the two roots of $p_5$ are separated by the unique
root of $p_4$ different from $2$, the $3$ roots of $p_7$ are separated by the $2$ roots of $p_6$ different from $2$, and so on...  The proof of the theorem will be delivered in two steps by induction in both cases.  Since $\tfrac{p_2(A)}{2-A}=1$ does not admit any root, this is a very special case that will not play any
rol in these induction arguments.
\begin{figure}[h!]
	\centering
	\includegraphics[scale=0.65]{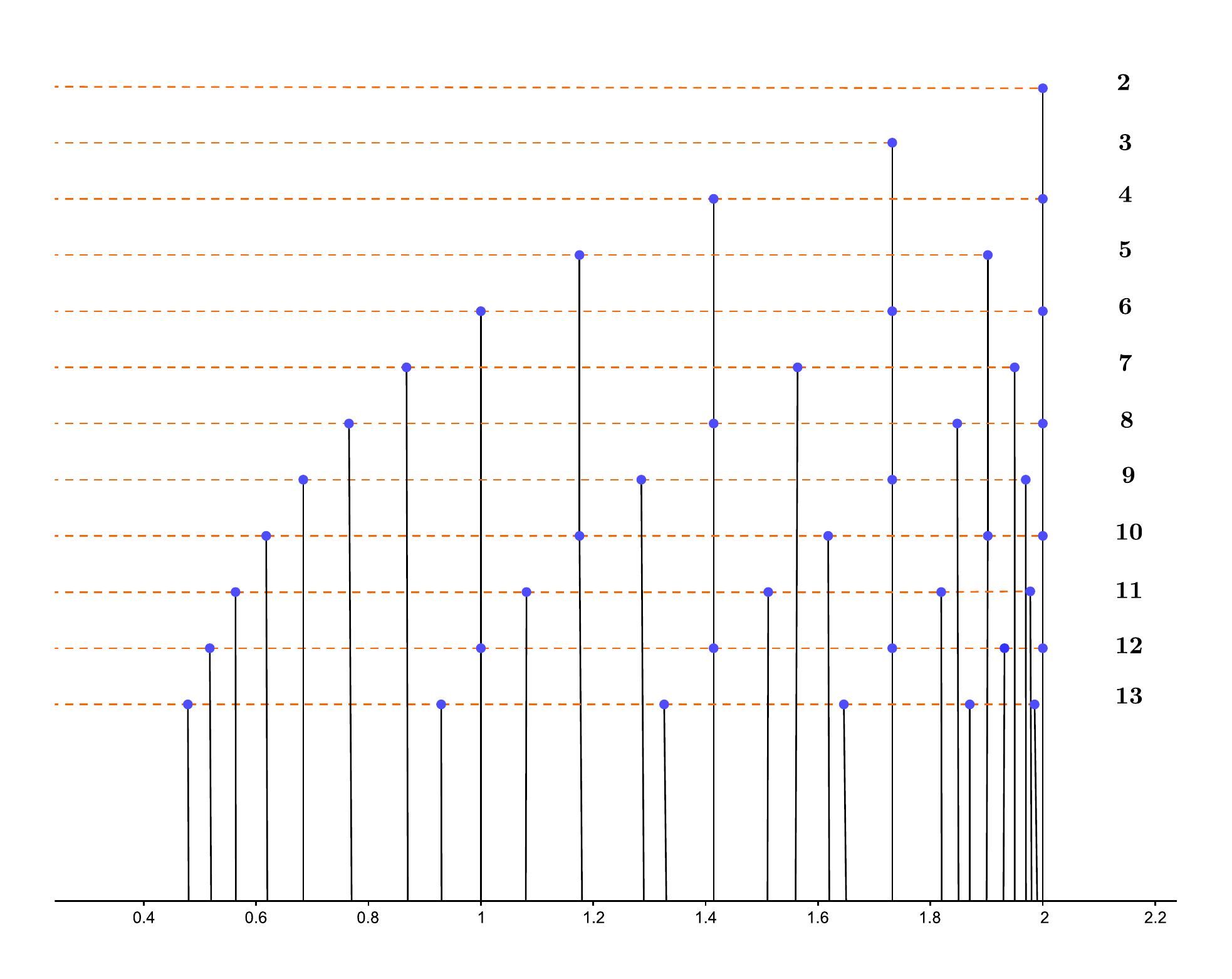}
	\caption{Positive roots of $ p_n $, $2\leq n \leq 13$. }
	\label{Fig2}
\end{figure}
\vspace{0.4cm}
\par
\noindent\textbf{Step 1: Passing from $p_{2n}(A)$ to $p_{2n+1}(A)$, $n\geq 2$.} According to Figure \ref{Fig2}, it becomes apparent that the two positive roots of $p_4(A)$ are separated by the unique root of $p_3(A)$. Moreover, all these zeros are real and simple and each of the polynomials
\[
  p_{3}(A)=-A^2+3,\qquad \frac{p_4(A)}{2-A}= -A^2+2,
\]
has a unique positive root. Arguing by induction, assume that $p_{2n-1}(A)$ and $p_{2n}(A)$ satisfy all the assertions of the statement of the theorem  for some $n\geq 2$. In other words, all the positive roots of these  polynomials are real and algebraically simple, the positive roots of $p_{2n}(A)$ are separated by the positive roots of $p_{2n-1}(A)$, and the polynomials  $\tfrac{p_{2n}(A)}{2-A}$ and $p_{2n-1}(A)$ have (exactly) $n-1$ positive roots. We claim that the positive roots of the polynomial $p_{2n+1}(A)$ are real and simple, that they are separated by the positive roots of $p_{2n}(A)$, except for $2$, and that it has (exactly) $n$ positive roots. Indeed, by Theorem \ref{th4.8} , we already know that
\begin{equation}
\label{5.1}
  p_{2n+1}(A)=(2+A)p_{2n}(A) - p_{2n-1}(A).
\end{equation}
First, we will show the previous claim in the case when $ 2n\in 4\N+2 $. So, suppose $2n\in 4\N+2$. Figure \ref{Fig3} shows the plots of the polynomials $p_{2n-1}(A)$ and $(2+A)p_{2n}(A)$ in one of such cases: \begin{figure}[h!]
	\centering
	\includegraphics[scale=1]{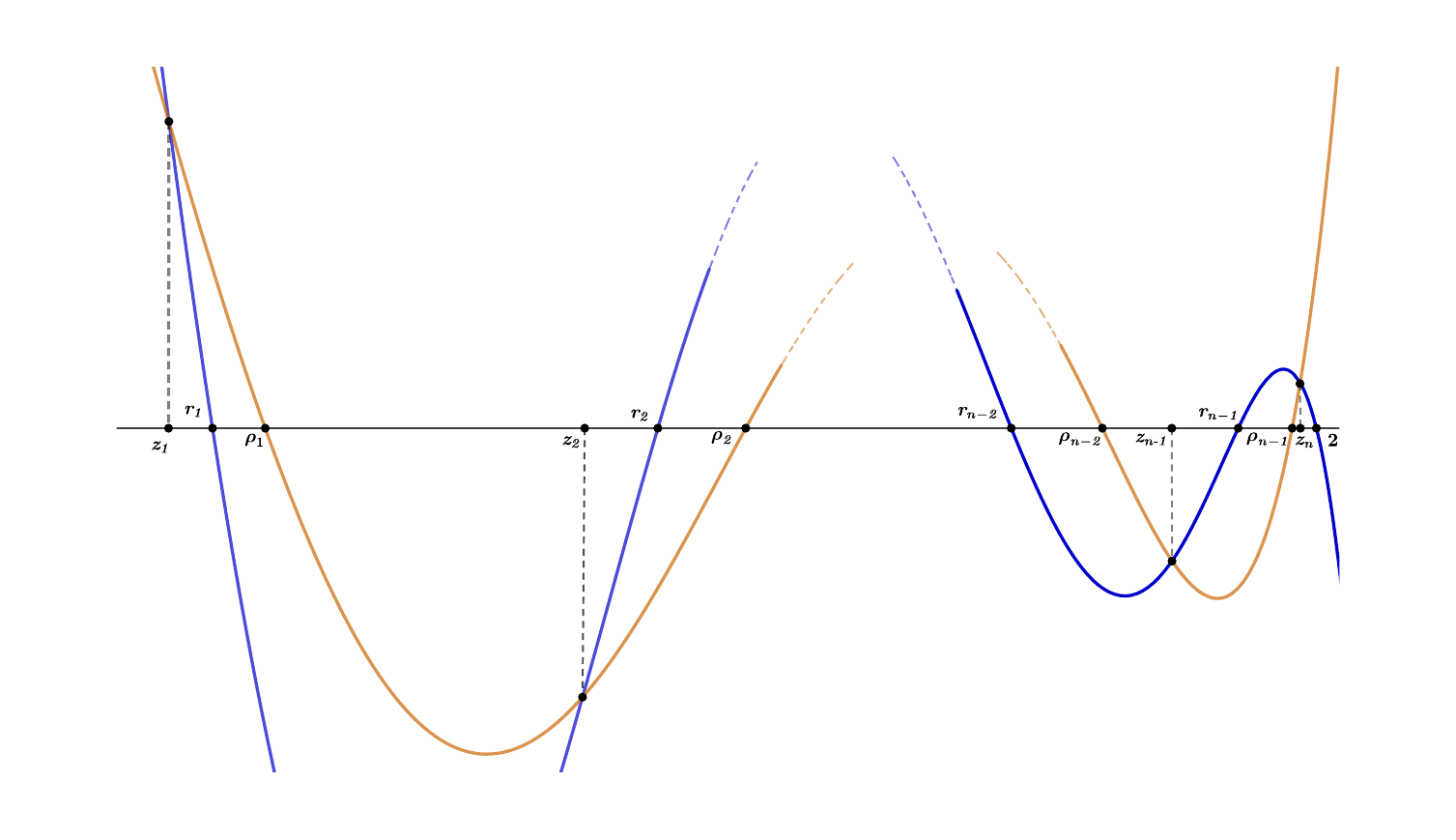}
	\caption{Sketch of the construction of  $p_{2n+1}(A)$.}
	\label{Fig3}
\end{figure}
$p_{2n-1}(A)$ has been plotted in brown and $(2+A)p_{2n}(A)$ in blue.
According to Lemmas \ref{le4.4} and \ref{le4.5}, we already know that
\begin{equation}
\label{5.2}
  2n-1 = p_{2n-1}(0)< 4n = 2p_{2n}(0), \quad \mathrm{deg\,}(p_{2n-1})=2n-2,\quad
  \mathrm{deg\,}((2+A)p_{2n})=2n,
\end{equation}
and, since $2n\in 4\N+2$, the leading coefficient of $p_{2n-1}(A)$ equals $1$, while the leading coefficient of $p_{2n}(A)$ equals $-1$. Thus, $p_{2n-1}(A)>0$ and $(2+A)p_{2n}(A)<0$ for $A>2$. By the induction assumption, the polynomials  $\tfrac{p_{2n}(A)}{2-A}$ and $p_{2n-1}(A)$ have (exactly) $n-1$ positive roots. Hence, each of the polynomials $p_{2n-1}(A)$ and $(2+A)p_{2n}(A)$ possesses (exactly) $n-1$ simple roots in the interval $(0,2)$ and, in addition, $p_{2n}(2)=0$. In Figure \ref{Fig3}, we have named by $\rho_i$, $1 \leq i \leq n-1$, the $n-1$ positive roots of $p_{2n-1}(A)$,
\[
  0 < \rho_1<\rho_2< \cdots < \rho_{n-2}<\rho_{n-1}<2,
\]
while those of $p_{2n}(A)$ less than $2$  have been named by  $r_i$, $1\leq i \leq n-1$. So,
\[
  0 < r_1< r_2<\cdots r_{n-1}<r_{n-1}< r_{n}:=2.
\]
As, again by the induction hypothesis, the positive roots of $(2+A)p_{2n}(A)$ are separated by the positive roots of $p_{2n-1}(A)$, necessarily
\begin{equation}
\label{5.3}
  0<r_1<\rho_1<r_2<\rho_2<\cdots < r_{n-2}<\rho_{n-2}<r_{n-1}<\rho_{n-1}<r_n=2.
\end{equation}
Consequently, by \eqref{5.1}, the polynomial $p_{2n+1}(A)$ must have, at least, $n$ different roots in the interval $(0,2)$. These roots have been named by $z_i$, $1\leq i \leq n$, in Figure \ref{Fig3} and they satisfy
\begin{equation}
\label{5.4}
  0 < z_1 < r_1 <  \rho_1 < z_2 < r_2 < \rho_2 < \cdots < z_{n-1}<
  r_{n-1}<\rho_{n-1}<z_n< 2.
\end{equation}
On the other hand, by  Corollary \ref{co4.9}, $p_{2n+1}(A)$ is an even polynomial. Thus, since, due to Lemma
\ref{le4.5}, it has degree $2n$ and, by the previous construction, $\pm z_i$, $1\leq i \leq n$, provides us
with a set of $2n$ different roots of $p_{2n+1}(A)$, necessarily
\[
  p_{2n+1}(A)= -\prod_{j=1}^n(A^2-z_j^2), \qquad A >0.
\]
Therefore, all the roots of $p_{2n+1}(A)$ are real and algebraically simple. As a direct consequence of
\eqref{5.4} it is apparent that the positive roots of $p_{2n+1}(A)$ are separated by the positive roots of $p_{2n}(A)$, except for $2$.
\par
Subsequently, we should prove the result in the special case when $2n\in 4\N$. In this situation,
owing to Lemmas \ref{le4.4} and \ref{le4.5}, the plots of the polynomials $p_{2n-1}(A)$ and $(2+A)p_{2n}(A)$ look like illustrated by Figure \ref{Fig4}. Apart from the fact that now $p_{2n-1}(A)>0$ and
$(2+A)p_{2n}(A)<0$ for all $A>2$, because the leading coefficients change sign, the previous analysis can be easily adapted to cover the present situation in order to infer that $p_{2n+1}(A)$ satisfies all the requirements also in this case. By repetitive the technical details of the proof are omitted here in.
\begin{figure}[h!]
	\centering
	\includegraphics[scale=1]{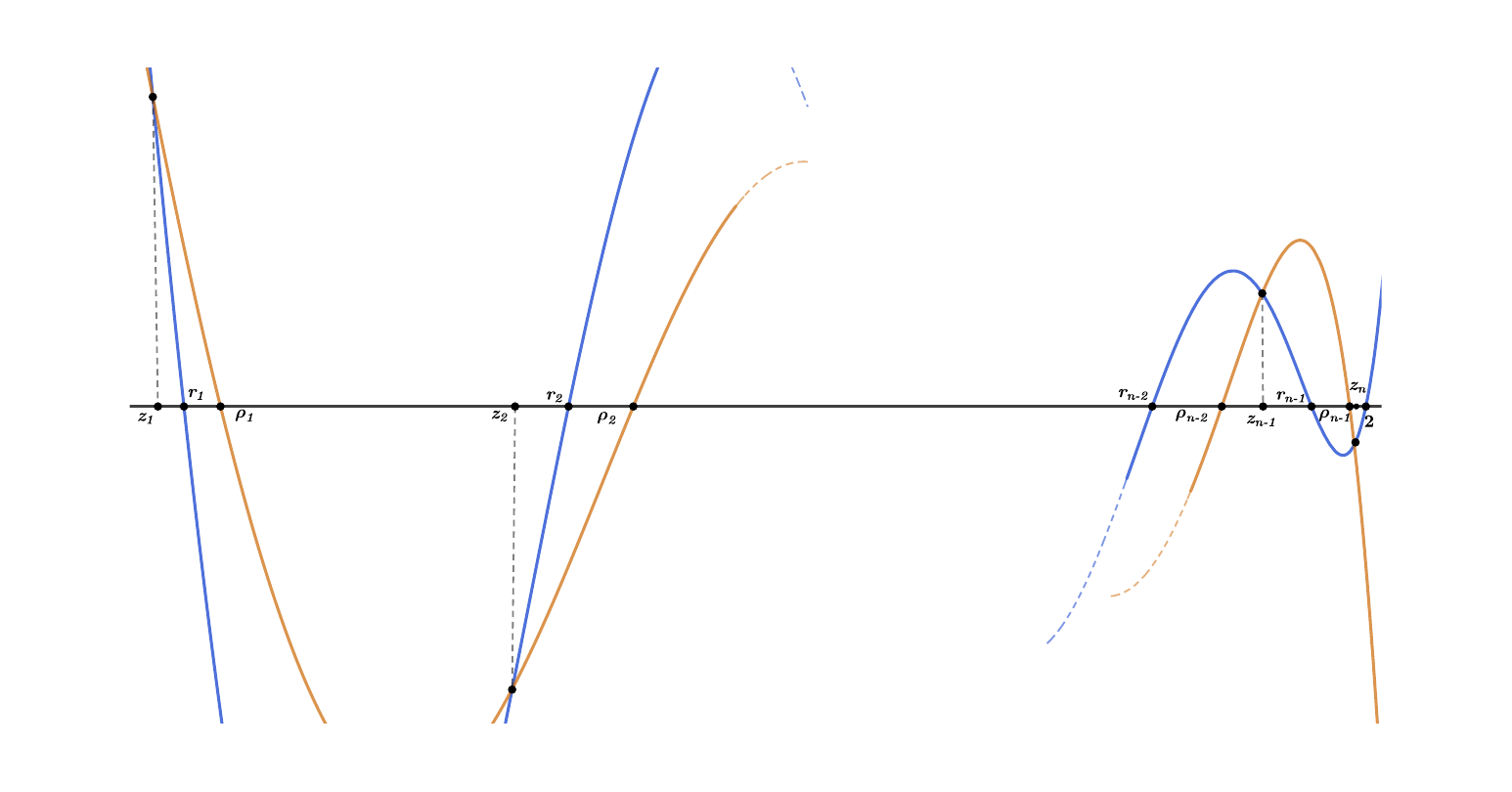}
	\caption{Sketch of the construction of $ p_{2n+1}(A) $. }
	\label{Fig4}
\end{figure}
\vspace{0.4cm}
\par
\noindent\textbf{Step 2: Passing from $p_{2n+1}(A)$ to $p_{2n+2}(A)$, $n\geq 2$.} According to Figure \ref{Fig2}, it becomes apparent that the two positive roots of $p_5(A)$ are separated by the unique root of $p_4(A)$ less than $2$. Moreover, all their roots are real and simple. Note that the polynomials
\[
 \frac{p_4(A)}{2-A}= -A^2+2, \qquad p_5(A)=A^4-5A^2+5,
\]
have one and two positive roots respectively. Arguing by induction, assume that $p_{2n}(A)$ and $p_{2n+1}(A)$ satisfy all the requirements in the statement of the theorem  for some $n\geq 2$, i.e., all the positive roots of these polynomials are real and algebraically simple, the positive roots of $p_{2n+1}(A)$ are separated by the positive roots less than $2$ of $p_{2n}(A)$, and the polynomials  $\tfrac{p_{2n}(A)}{2-A}$ and $p_{2n+1}(A)$ have, respectively, $n-1$  and $ n $ positive roots. We claim that the roots of the polynomial $p_{2n+2}(A)$ are real and simple, that they are separated by the roots of $p_{2n+1}(A)$, and that $p_{2n+2}(A)$ possesses $n+1$ positive roots. Indeed, by Theorem \ref{th4.8} ,
\begin{equation}
\label{5.5}
  p_{2n+2}(A)=(2-A)p_{2n+1}(A) - p_{2n}(A).
\end{equation}
As in the previous step, we first deal with the case when $2n\in 4\N+2$. By Lemmas \ref{le4.4} and \ref{le4.5}, we already know that
\begin{equation}
\label{5.6}
  2n = p_{2n}(0)< 4n+2 = 2p_{2n+1}(0), \;\;  \mathrm{deg\,}(p_{2n})=2n-1,\;\;
  \mathrm{deg\,}\left((2\!-\!A)p_{2n+1}\right)=2n+1,
\end{equation}
and, since $2n\in 4\N+2$, the leading coefficient of $p_{2n}(A)$ equals $-1$, and the leading coefficient of $p_{2n+1}(A)$ equals also $-1$. Thus,
\[
  p_{2n}(A)<0,\qquad (2-A)p_{2n+1}(A)>0\qquad \hbox{for all}\;\; A>2.
\]
By the induction assumption, the polynomials  $\tfrac{p_{2n}(A)}{2-A}$ and $p_{2n+1}(A)$ have (exactly) $n-1$ and $n$ positive roots, respectively. Thus, each of the  polynomials $p_{2n}(A)$ and $(2-A)p_{2n+1}(A)$ possesses (exactly) $n$ simple roots in $(0,2)$ and, obviously, $(2-A)p_{2n+1}(A)$  also vanishes at $A=2$. Figure \ref{Fig5} shows the plots of $p_{2n}(A)$, in blue, and $(2-A)p_{2n+1}(A)$, in brown.
\begin{figure}[h!]
	\centering
	\includegraphics[scale=1]{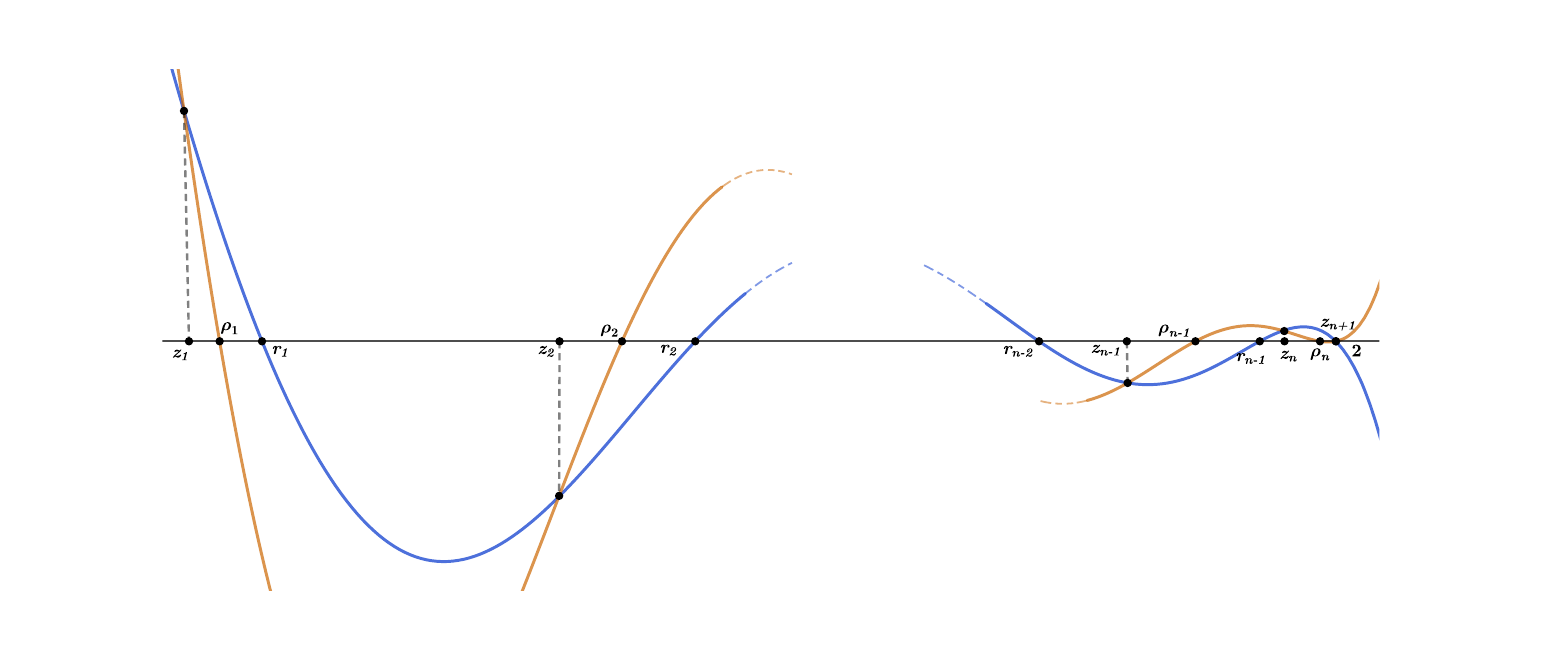}
	\caption{Sketch of the construction of $ p_{2n+2}(A) $ in case $2n\in 4\N+2$. }
	\label{Fig5}
\end{figure}
In Figure \ref{Fig5}, we have named by $\rho_i$, $1 \leq i \leq n$, the $n$ positive roots less than $2$ of $(2-A)p_{2n+1}(A)$,
\[
  0 < \rho_1<\rho_2< \cdots < \rho_{n-1}<\rho_{n}<2:=\rho_{n+1},
\]
whereas $r_i$, $1\leq i \leq n$, stand for the positive roots of $p_{2n}(A)$. Since $p_{2n}(2)=0$,
$r_n=2$. Since the positive roots of $p_{2n+1}(A)$ are separated by the positive roots less than $2$ of
$p_{2n}(A)$, the following holds
\[
  0 < \rho_1<r_1<\rho_2<r_2< \cdots < \rho_{n-1}<r_{n-1}<\rho_{n}<2:=\rho_{n+1}=r_{n},
\]
as illustrated by Figure \ref{Fig5}. Thanks to \eqref{5.5}, it becomes apparent that the polynomial
$p_{2n+2}(A)$ admits, at least, an interior root in each of the intervals $(\rho_i,\rho_{i+1})$, $i=0,...,n$, denoted by $z_i$ in Figure \ref{Fig5}, plus $z_{n+1}=2$. Here we are setting $\rho_0:=0$. Consequently,
$p_{2n+2}(A)$ has, at least, $n+1$ positive roots.
\par
On the other hand, thanks to Corollary \ref{co4.9},  $\tfrac{p_{2n+2}(A)}{2-A} $ is an even function and hence, $p_{2n+2}(A)$ has, at least, $2n+1$ different roots. Since, by Lemma \ref{le4.5},
\[
  \mathrm{deg\,}(p_{2n+2})=2n+1,
\]
all these roots are real and algebraically simple. By construction, it is apparent that the positive roots of $p_{2n+2}(A)$ are separated by the positive roots of $p_{2n+1}(A)$ (see Figure \ref{Fig5} if necessary).
\par
If, instead of $2n\in 4\N+2$, we impose $ 2n\in 4\N $, then the previous arguments can be easily adapted to complete the proof of the  theorem from Figure \ref{Fig6}, where the graphs of $(2+A)p_{2n+1}(A)$ and $p_{2n}(A)$ have been superimposed in order to show their crossing points, which, owing to
 Theorem \ref{th4.8} , are the roots of $p_{2n+2}(A)$.  By repetitive, the technical details of this case are not included here.
\end{proof}	
\begin{figure}[h!]
	\centering
	\includegraphics[scale=1]{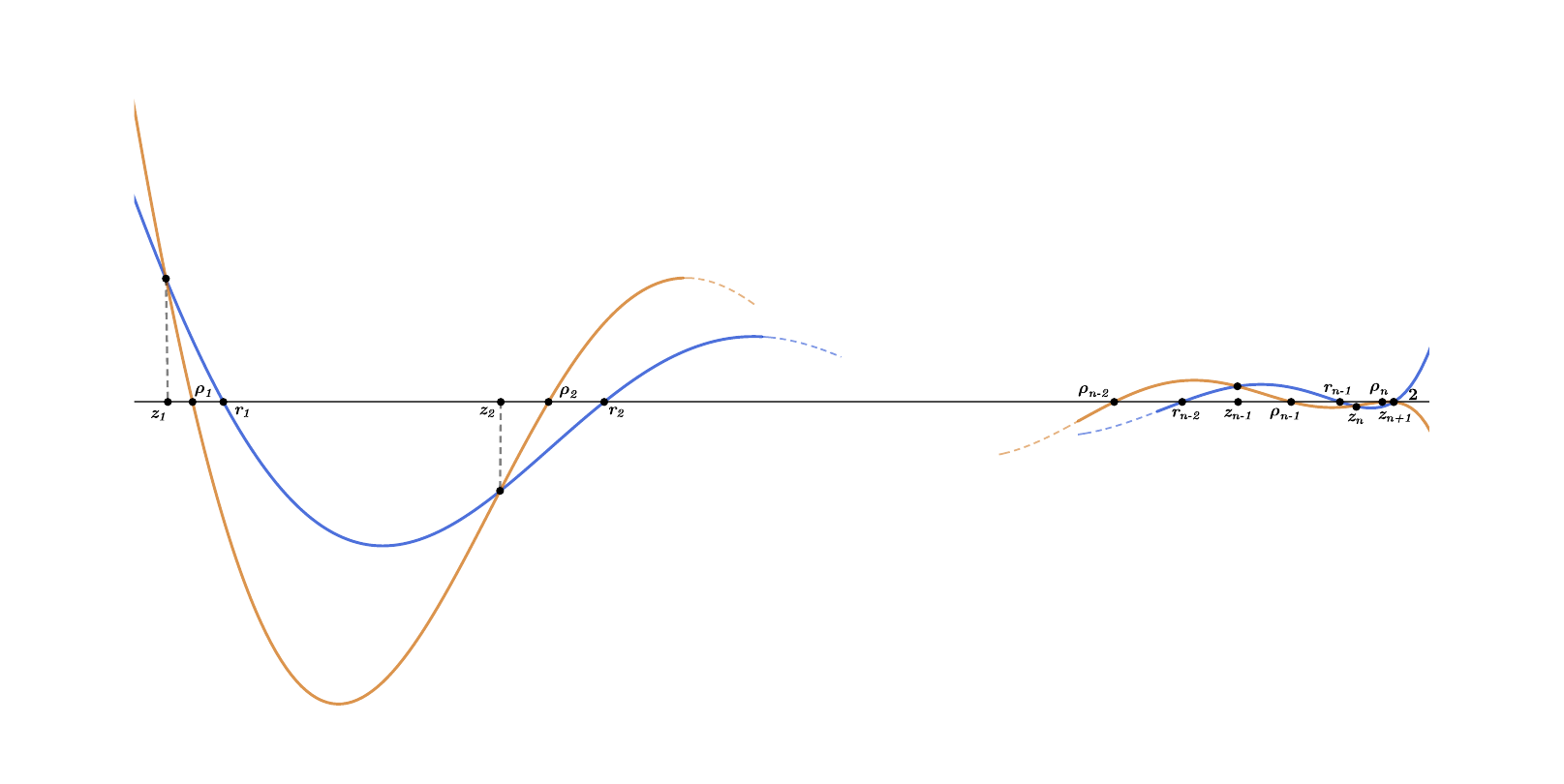}
	\caption{Sketch of the construction of $ p_{2n+2}(A) $ in case $2n\in 4\N$. }
	\label{Fig6}
\end{figure}

A careful reading of the proof of Theorem \ref{5.1} reveals that, actually, not only the roots of
$p_{n}(A)$ are separated by those of $p_{n-1}(A)$, but that they are also separated by those of $p_{n-2}(A)$, taking always into account the exceptional role played by the root $2$.
\par

\section{Global bifurcation diagram}
	
This section analyzes the global structure of the set of zeros of the maps $\v_n$, $n\geq 1$, introduced in \eqref{3.8}. These zeros are the positive fixed points of the Poincar\'e maps $\mc{P}_n$, $n\geq 1$,  constructed in Section 3. They provide us with the $nT$-periodic
coexistence states of \eqref{1.1} under the additional assumption \eqref{3.5}.  It should be remembered that,  according to \eqref{4.2},  for every integer $n\geq 1$
\begin{equation}
\label{6.1}
  p_n(A)=  \frac{d\v_n (A,1)}{dx}=\mf{L}(n;A),\qquad A>0,
\end{equation}
provides us with the linearization at the \emph{trivial curve}, $(A,1)$, of $\v_n(A,x)$. In our analysis, $A$ is always regarded as a bifurcation parameter to  $nT$-periodic coexistence states from the $T$-periodic ones (i.e., from $x=1$). As a consequence of the simplicity of all the roots of $p_n(A)$, $n\geq 1$, guaranteed by Theorem \ref{th5.2}, the following result holds.

\begin{Thm}
\label{th6.1} For every $n\geq 1$ and $r \in p_n^{-1}(0)$ the following algebraic transversality condition holds
\begin{equation}
\label{6.2}
		\mathfrak{L}_1 (N\left[\mathfrak{L}(n;r) \right] )\oplus R\left[ \mathfrak{L}(n;r) \right]=\mathbb{R},
\end{equation}
where
\[
  \mathfrak{L}_1:= \frac{d \mathfrak{L}(n;r)}{d A} , \qquad n\geq 1,\;\; r\in p_n^{-1}(0).
\]
Therefore, by Theorem 1.7 of M. G. Crandall and P. H. Rabinowitz \cite{CR71}, there exists an
analytic curve of $nT$-periodic coexistence states of \eqref{1.1} bifurcating from $(A,1)$ at the root $A=r$. Actually, there exists $\e>0$ and a real analytic map $A: (-\e,\e)\to \R$ such that $A(0)=r$ and
\[
  \v_n(A(s),1+s)=0 \quad \hbox{for all}\;\; s\in (-\e,\e).
\]
Moreover, any non-trivial zero of $\v_n$, $(A,x)$ with $x\neq 1$,  in a neighborhood of $(r,1)$ must be of the form $(A(s),1+s)$ for some $s \in (-\e,\e)$. In other words, there exists $\varrho >0$ such that
\[
  \left. \begin{array}{rr} \v_n(A,x)=0 \\ |A-r|+|x-1|<\varrho \\ x\neq 1
  \end{array}\right\} \Longrightarrow   (A,x)=(A(s),1+s)  \quad \hbox{for some}\;\; s \in (-\e,\e).
\]
Furthermore, setting
\begin{equation}
\label{6.3}
  A(s)= r + A_1 s + A_2 s^2 + \mathcal{O} (s^3) \quad \hbox{as}\;\; s\to 0,
\end{equation}
one has that
\begin{enumerate}
\item[{\rm (a)}] $A_1=0$ and $A_2>0$ if $n=2$ and $r=2$, in complete agreement with Figure \ref{Fig1};
\item[{\rm (b)}] $A_1<0$ if $n=3$ and $r=\sqrt{3}$;
\item[{\rm (c)}] $A_1=0$ and $A_2 <0$ (resp. $A_2>0$) if $n=4$ and $r = r_{4,1}=\sqrt{2}$
(resp. $r = r_{4,2}=2$).
\end{enumerate}
\end{Thm}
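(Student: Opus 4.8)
The transversality condition \eqref{6.2} is, in this one-dimensional setting, nothing but the algebraic simplicity of the roots of $p_n$. Indeed, $\mathfrak{L}(n;A)$ is just multiplication by the scalar $p_n(A)$, so at a root $r\in p_n^{-1}(0)$ one has $N[\mathfrak{L}(n;r)]=\mathbb{R}$ and $R[\mathfrak{L}(n;r)]=\{0\}$, while $\mathfrak{L}_1$ is multiplication by $p_n'(r)$, which is nonzero by Theorem \ref{th5.2}; hence $\mathfrak{L}_1(N[\mathfrak{L}(n;r)])\oplus R[\mathfrak{L}(n;r)]=\mathbb{R}\oplus\{0\}=\mathbb{R}$, which is \eqref{6.2}. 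Since $\varphi_n(A,x)$ is real analytic (a finite composition of polynomials and exponentials) and, by construction, $\varphi_n(A,1)\equiv 0$, the plan is then to invoke Theorem 1.7 of \cite{CR71}. Because the phase space is one-dimensional, the Lyapunov--Schmidt complement of the kernel is trivial, so the bifurcating branch is parameterized exactly as $(A(s),1+s)$, together with the asserted local uniqueness.

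To reach the expansion \eqref{6.3} I would substitute $x=1+s$, $A=A(s)$ into $\varphi_n=0$ and match powers of $s$. Writing $c_{jk}:=\tfrac{1}{j!k!}\partial_A^j\partial_x^k\varphi_n(r,1)$, the identity $\varphi_n(A,1)\equiv 0$ forces $c_{j0}=0$ for all $j$, while $\partial_x\varphi_n(A,1)=p_n(A)$ gives $c_{01}=p_n(r)=0$ and $c_{11}=p_n'(r)\neq 0$. Matching the coefficient of $s^2$ yields $c_{02}+c_{11}A_1=0$, that is, $A_1=-\partial_x^2\varphi_n(r,1)/(2p_n'(r))$; and, when $A_1=0$, the coefficient of $s^3$ collapses to $c_{03}+c_{11}A_2=0$, i.e. $A_2=-\partial_x^3\varphi_n(r,1)/(6p_n'(r))$. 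Thus the whole problem reduces to evaluating $\partial_x^2\varphi_n(\cdot,1)$, and for $n=2,4$ also $\partial_x^3\varphi_n(\cdot,1)$, at the relevant roots.

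These evaluations I would carry out from the recursion \eqref{3.13}: writing $g_k(x):=xE_k(x)$ one has $\varphi_n(x)=x-n+\sum_{k=1}^{n-1}g_k(x)$ with $g_k=xe^{h_k(x)}$ and $h_k(1)=0$, and by \eqref{3.10} each exponent $h_k$ is affine in the lower-index $g_j$'s, so the derivatives $g_k^{(i)}(1)$, $i\le 3$, are obtained recursively as polynomials in $A$. One gets $\partial_x^2\varphi_2(A,1)=A^2-2A$, which vanishes at $r=2$ so $A_1=0$, and $\partial_x^3\varphi_2(A,1)=A^2(3-A)$, whence $A_2=-4/(6\cdot(-1))=2/3>0$, proving (a), in agreement with Figure \ref{Fig1}. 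One gets $\partial_x^2\varphi_3(A,1)=A^2(A-2)(A+1)$, strictly negative at $r=\sqrt3\in(0,2)$, while $p_3'(\sqrt3)=-2\sqrt3<0$, so $A_1<0$, proving (b). Finally $\partial_x^2\varphi_4(A,1)=A(A^2-2)(A-2)(A^2-A-1)$ vanishes at both $r_{4,1}=\sqrt2$ and $r_{4,2}=2$, so $A_1=0$ in both cases. For $r_{4,2}=2$ I would note that $2$ is the bifurcation value to $2T$-periodic solutions, so by \eqref{4.1} the $2T$-branch of part (a) also solves $\varphi_4=0$ and, by the local uniqueness in \cite{CR71}, it must be the branch of $\varphi_4=0$ at $(2,1)$; hence $A_1=0$ and $A_2=2/3>0$ there (a direct check, $\partial_x^3\varphi_4(2,1)=-8$ and $p_4'(2)=2$, confirms it). For $r_{4,1}=\sqrt2$, evaluating $\partial_x^3\varphi_4(A,1)=g_1'''(1)+g_2'''(1)+g_3'''(1)$ at $A=\sqrt2$ (using $A^2=2$ to reduce all powers) gives $\partial_x^3\varphi_4(\sqrt2,1)=48-36\sqrt2<0$, while $p_4'(\sqrt2)=4(1-\sqrt2)<0$, so $A_2=-(48-36\sqrt2)/(24(1-\sqrt2))=-(1-\tfrac{\sqrt2}{2})<0$, which completes (c).

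The hard part will be the bookkeeping for $\partial_x^2\varphi_4$ and, especially, $\partial_x^3\varphi_4$ at $x=1$, since $g_3$ is a triply nested exponential and a naive differentiation explodes. What keeps it tractable is that at $x=1$ every $E_k$ equals $1$, hence each inner exponential equals $1$ there, so the quantities needed are symmetric polynomials in $h_k'(1),h_k''(1),h_k'''(1)$, which by \eqref{3.10} are merely linear combinations of the corresponding derivatives of lower-index $g_j$'s. Once $\partial_x^2\varphi_4(A,1)$ has been brought to the factored form above, the vanishing at $\sqrt2$ and $2$ is immediate, and the sign of $A_2$ at $\sqrt2$ comes down to the single inequality $36\sqrt2>48$.
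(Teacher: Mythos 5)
Your proposal is correct and follows essentially the same route as the paper: transversality via the algebraic simplicity of the roots of $p_n$ from Theorem \ref{th5.2}, the Crandall--Rabinowitz theorem for the analytic branch and its local uniqueness, and then $A_1=-\partial_x^2\v_n(r,1)/\left(2p_n'(r)\right)$ and, when $A_1=0$, $A_2=-\partial_x^3\v_n(r,1)/\left(6p_n'(r)\right)$, evaluated with the same polynomials $\partial_x^2\v_n(A,1)$ that appear in the paper's proof. Two side remarks: your treatment of the root $r=2$ of $p_4$ through \eqref{4.1} and the local uniqueness of the $2T$-branch covers a case the paper's proof leaves implicit, and your explicit constants ($A_2=2/3$ for $n=2$ at $r=2$, $A_2=\tfrac{\sqrt{2}}{2}-1$ for $n=4$ at $r=\sqrt{2}$, with $\partial_x^3\v_4(\sqrt{2},1)=48-36\sqrt{2}$) differ from the values printed in the paper ($4/3$ and $-2(5+4\sqrt{2})/3$) only because of arithmetic slips there; the signs, which are all the theorem asserts, agree, and your evaluations are the consistent ones.
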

\begin{proof}
According to \eqref{6.1}, $\mf{L}(n;r)=p_n(r)=0$. Thus, $N[\mf{L}(n;r)]=\R$ and \eqref{6.2} can be equivalently expressed as $\mf{L}_1(\R) =\R$, which holds true because, thanks to Theorem \ref{5.1}, we already know that  $r$ is an algebraically simple root of $p_n(A)$, i.e.,
\[
  \mf{L}_1 = p_n'(r) \neq 0.
\]
So, \eqref{6.2} indeed holds and \cite[Th. 1.7]{CR71} applies to $\v_n(A,x)=0$ at $(A,x)=(r,1)$.
Since we can take $\psi=1$ as a generator of $N[\mf{L}(n;r)]=\R$ and $Y=[0]$ as a supplement of $N[\mf{L}(n;r)]=\R$ in $\R$, owing to \cite[Th. 1.7]{CR71}, there exist $\e>0$ and  a real analytic map
\[
  (A,y):(-\e,\e)\to \R\times Y
\]
such that $(A(0),y(0))=(r,0)$ and
\begin{equation}
\label{6.4}
  \v_n(A(s),1+s(\psi+y(s)))=0 \quad \hbox{for all}\;\; s\in (-\e,\e),
\end{equation}
it becomes apparent, by construction, that
\[
  \v_n(A(s),1+s)=0\quad \hbox{for all} \;\; s \in (-\e,\e),
\]
because $y\equiv 0$ and $\psi=1$. This ends the proof of the first two claims of the theorem:
the existence of the analytic curve of nontrivial solutions and the uniqueness.
\par
As far as concerns to the problem of ascertaining the nature of these \emph{local bifurcations} at
$(r,1)$, we can proceed as follows. In order to prove Part (a), note that, thanks to \eqref{6.4}, setting $x(s):=1+s$ and expanding in Taylor series, we have that
\[
 0 = \v_2(A(s),x(s))=\v_2(r,1)+\frac{d\v_2}{ds}(r,1)s+\frac{1}{2}\frac{d^2\v_2}{ds^2}(r,1)s^2+\cdots
\]
for all $s\in(-\e,\e)$, where $r=2$. Moreover, by  construction, we already know that
\[
  \v_2(r,1)=0,\qquad \frac{\p\v_2}{\p x}(r,1)=p_2(r)=p_2(2)=0
\]
(see \eqref{6.1}, if necessary). Thus, since by \eqref{3.13}
\begin{equation}
\label{6.5}
  \v_2(A,x)=x\left( E_1(A,x)+1\right) -2,
\end{equation}
it follows from \eqref{6.3} and $\frac{\partial E_1}{\partial A}(r,1)=0$ that
\[
   \frac{d\v_2}{ds}(r,1)=\frac{\p\v_2}{\p A}(r,1)A'(0)=\frac{\partial E_1}{\partial A}(r,1)A_1=0,
\]
where $':=\frac{d}{ds}$. Hence, these terms do not provide us with any neat information concerning $A_1$.
So, we must consider higher order terms to find out $A_1$. As
\[
  \frac{\p\v_2}{\p A}(r,1)=0=\frac{\p\v_2}{\p x}(r,1),
\]
applying the chain rule it readily follows that
\begin{equation}
\label{6.6}
  0= \frac{d^2\v_2}{ds^2}(r,1)=\frac{\p^2\v_2}{\p x^2}(r,1)+2\frac{\p^2\v_2}{\p A\p x}(r,1)A_1+\frac{\p^2\v_2}{\p A^2}(r,1)A_1^2.
\end{equation}
On the other hand, differentiating with respect to $x$ the identity \eqref{6.5} yields
\begin{equation}
\label{6.7}
  \frac{\p \v_2}{\p x}(A,x) = E_1(A,x)+1+ x \frac{\p E_1}{\p x}(A,x).
\end{equation}
So,
\[
  \frac{\p^2 \v_2}{\p x^2}(A,x) = 2 \frac{\p E_1}{\p x} (A,x)+ x \frac{\p^2 E_1}{\p x^2}(A,x).
\]
Consequently, particularizing at $(A,x)=(r,1)$, it follows from \eqref{3.10} that
\begin{equation}
\label{6.8}
  \frac{\p^2 \v_2}{\p x^2}(r,1) =r^2-2r=4-4=0.
\end{equation}
Similarly, differentiating \eqref{6.7} with respect to $A$ shows that
\[
  \frac{\p^2 \v_2}{\p x\p A}(A,x)=\frac{\p E_1}{\p A}(A,x)+x\frac{\p^2 E_1}{\p x\p A}(A,x)
\]
and hence, owing to \eqref{3.10},
\begin{equation}
\label{6.9}
   \frac{\p^2 \v_2}{\p x\p A}(r,1)=\frac{\p E_1}{\p A}(r,1)+\frac{\p^2 E_1}{\p x\p A}(r,1) =
   \frac{\p^2 E_1}{\p x\p A}(r,1) = -1.
\end{equation}
Lastly,
\[
  \frac{\p^2\v_2}{\p A^2}(A,x)=x\frac{\p^2 E_1}{\p A^2}(A,x)= (1-x)^2E_1(A,x)
\]
and hence,
\begin{equation}
\label{6.10}
  \frac{\p^2\v_2}{\p A^2}(A,1)=0.
\end{equation}
Therefore, substituting \eqref{6.8}, \eqref{6.9} and \eqref{6.10} into \eqref{6.6} it becomes apparent that $A_1=0$. Thanks to this fact, the third derivative admits the next (simple) expression:
\[
   0=\frac{d^3\v_2}{ds^3}(r,1)=6\frac{\p^2\v_2}{\p x\p A}(r,1)A_2+\frac{\p^3\v_2}{\p x^3}(r,1)=-6A_2+3r^2-r^3,
\]
which implies  that
\[
  A_2=\frac{4}{3}>0
\]
and ends the proof of Part (a).
\par
To prove Part (b), note that, much like in Part (a), one has that
\[
   0= \v_3(A(s),x(s))=\v_3(r,1)+\frac{d\v_3}{ds}(r,1)s+\frac{1}{2} \frac{d^2\v_3}{ds^2}(r,1)s^2+\cdots
\]
for all $s \in (-\e,\e)$. Similarly,
\[
   \frac{\p \v_3}{\p A}(r,1)=0=\frac{\p\v_3}{\p x}(r,1).
\]
So,
\[
  \frac{d\v_3}{ds}(r,1)=0.
\]
Moreover, differentiating twice with respect to $s$ yields
\begin{align*}
  0 & = \frac{d^2\v_3}{ds^2}(r,1)\\[5pt] & =\frac{\p^2\v_3}{\p x^2}(r,1)+2\frac{\p^2\v_3}{\p x\p A}(r,1)A_1+\frac{\p^2\v_3}{\p A^2}(r,1)A_1^2\\[5pt] & =r^4-r^3-2r^2-4rA_1.
\end{align*}
Consequently, since $r=\sqrt{3}$, it follows from this identity that
\[
  A_1=\frac{\sqrt{3}-3}{4}<0,
\]
which ends the proof of Part (b).
\par
Finally, much like before, we have that
\[
   0= \v_4(A(s),x(s))=\v_4(r,1)+\frac{d\v_4}{ds}(r,1)s+\frac{1}{2}\frac{d^2\v_4}{ds^2}(r,1)s^2+\cdots
\]
for all $s \in (-\e,\e)$, and, in addition,
\[
 \frac{\p \v_4}{\p A}(r,1)=0=\frac{\p \v_4}{\p x}(r,1).
\]
Thus, $\frac{d\v_4}{ds}(r,1)=0$.  Moreover, differentiating twice yields
\begin{align*}
     0 & =\frac{d^2\v_4}{ds^2}(r,1)\\[5pt] &=\frac{\p^2\v_4}{\p x^2}(r,1)+2\frac{\p^2\v_4}{\p x\p A} (r,1)A_1+\frac{\p^2\v_4}{\p A^2}(r,1)A_1^2 \\[5pt] &=r^6-3r^5-r^4+8r^3-2r^2-4r+2(r^3-2r^2-2r)A_1.
\end{align*}
Therefore, since $r=\sqrt{2}$ it follows from this identity that $A_1=0$. Furthermore,
\begin{align*}
0 & = \frac{d^3\v_2}{ds^3}(r,1)\\[5pt] &=6\frac{\p^2\v_2}{\p x\p A} (r,1)A_2+\frac{\p^3\v_2}{\p x^3}(r,1)\\[5pt] &=6(3r^2-4r-2)A_2-r^8+r^7+9r^6-11r^5-10r^4+20r^3-2r^2.
\end{align*}
Consequently, we find from  $r=\sqrt{2}$ that
\[
A_2=-\frac{2(5+4\sqrt{2})}{3}<0,
\]
which ends the proof.
\end{proof}

Figure \ref{Fig7} shows the local bifurcation diagrams of the $2T$, $3T$ and $4T$-periodic coexistence states of \eqref{1.1} under condition \eqref{3.5}. We are plotting $x$, in ordinates, versus $A$, in abscisas.
By the analysis already done at the beginning of Section 2, and, in particular, by Theorem \ref{th2.1}, which was sketched in Figure \ref{Fig1}, we already know that, under condition \eqref{3.5}, the problem \eqref{1.1} admits a $2T$-periodic coexistence state if, and only if, $A>2$. Moreover, the local bifurcation of these solutions must be supercritical. Thus $A_2\geq 0$. As a byproduct of Theorem \ref{th6.1}, it turns out that $A_2>0$. So, it is  a genuine supercritical pitchfork bifurcation of quadratic type.
However, since $A_1<0$, the bifurcation to $3T$-periodic coexistence states from $(A,x)=(\sqrt{3},1)$ is transcritical, whereas the  $4T$-periodic solutions emanate from $(A,x)=(\sqrt{2},1)$ through a
subcritical quadratic pitchfork bifurcation, because $A_1=0$ and $A_2<0$ in this case.
\par
The fact that the local nature of the first three bifurcation phenomena possess a completely different character  shows that, in general,   ascertaining the precise type of these local bifurcations for large $n$ might not be possible, much like happened with the problem of determining the fine structure of the set of bifurcation points from the trivial solution $(A,1)$. The higher is the order of the bifurcating subharmonics, measured by $n$,   the higher is the complexity of the associated function $\v_n$ and hence, the more involved is finding out  the values of $A_1$ and $A_2$ in \eqref{6.3} by the intrinsic nature of the functions $E_n$ defined in \eqref{3.10}.

\begin{figure}[h!]
	\centering
	\includegraphics[scale=0.72]{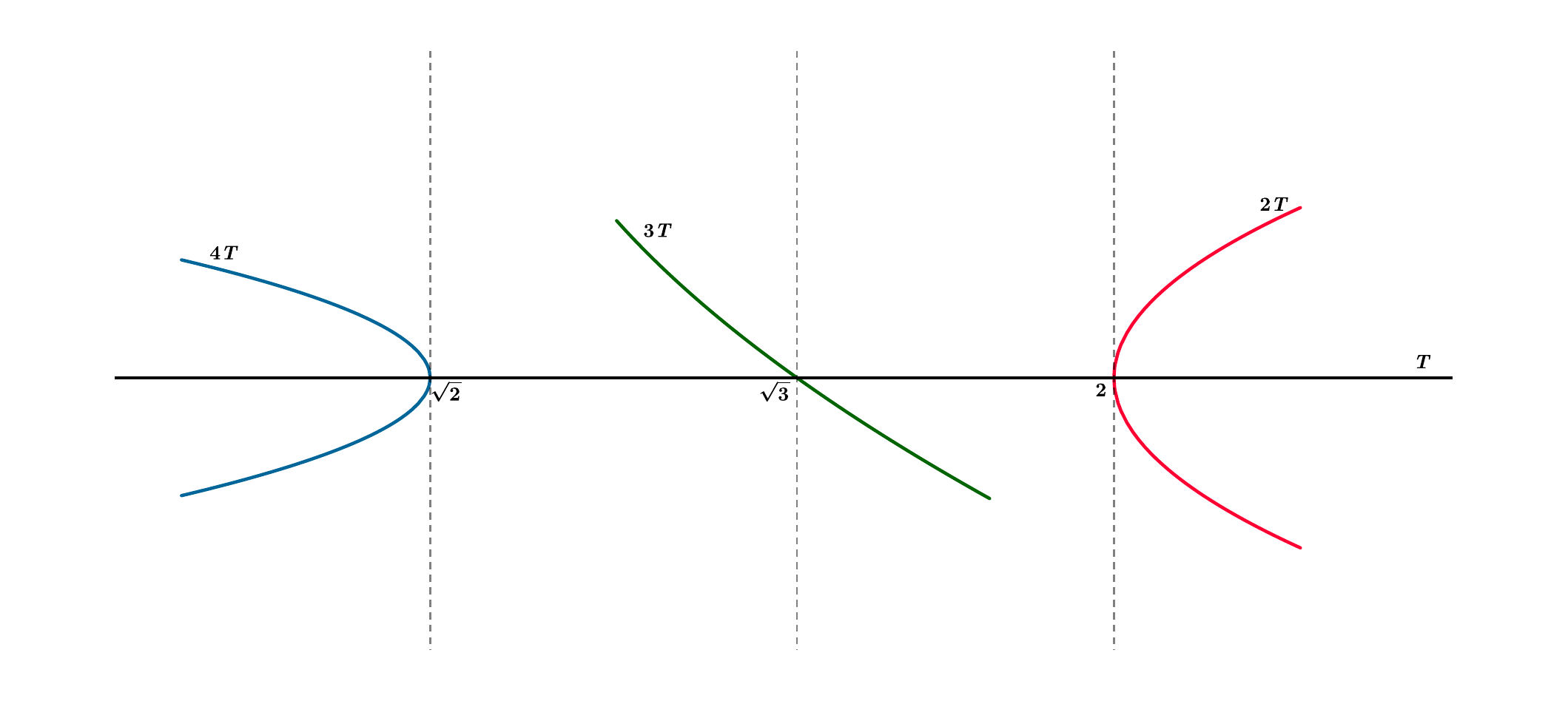}
	\caption{Local bifurcation diagrams from $(A,x)=(A,1)$ of the $nT$-periodic coexistence states for $n\in\{2,3,4\}$.}
	\label{Fig7}
\end{figure}

\begin{Rem}
\label{re6.2}
Thanks to Theorem \ref{th6.1}, it becomes apparent that the set of bifurcation points from $ (A,1) $ to $ nT $-periodic coexistence states of \eqref{1.1} is the set of roots of $ p_n(A) $. Since the number of roots of a polynomial is finite, the set of bifurcation points is numerable, as it is a numerable union of finite sets.
\par
Since every  $nT$-periodic coexistence state of \eqref{1.1} provides us with a $knT$-periodic coexistence state for all $k\geq 1$, owing Theorem \ref{th6.1}, the roots of $p_n(A)$ must be roots of $p_{kn}(A)$ for all  $ n,k\geq 1 $, i.e., $p_n|p_{kn}$ for all $n, k \geq 1$.
\end{Rem}
	
\begin{Rem}
\label{re6.3}
Thanks to Theorem \ref{th5.2} and Remark \ref{re6.2}, the set of bifurcation points to a $nT$-periodic solution is a subset of the interval  $(0,2]$. Complementing \cite[Th. 5.2]{LG00}, where the non-degeneration of the positive $T$-periodic coexistence states of \eqref{1.1}  with respect to the $T$-periodic solutions was established,  Theorem \ref{th6.1} shows that the $T$-periodic solutions are degenerated with respect to the $nT$-periodic solutions of \eqref{1.1} for all $n\geq 2$ at every positive root, $r$, of $p_n(A)$.
Nevertheless, the $T$-periodic solutions are non-degenerated with respect to the $nT$-periodic solutions, $n\geq 2$,  if $A>2$, because in this range there is not any bifurcation point from $(A,1)$.
\end{Rem}

Subsequently, we will discuss the global character of all the local bifurcations documented by Theorem
\ref{th6.1} in the context of global bifurcation theory. In this discussion, by a (connected) component it is understood any closed and connected subset that is maximal for the inclusion. For any given integer $n\geq 1$, the set of non-trivial $nT$-periodic solutions of \eqref{1.1}, $\mc{S}_n$,  consists of all $nT$-periodic coexistence states different from $(A,1)$ plus the set of points $(r,1)$ with $p_n(r)=0$. In other words, setting $\R_+:=(0,+\infty)$,
\[
  \mc{S}_n= \{(A,x) \in \R_+\times (\R_+\setminus\{1\}) \;:\;  \v_n(A,x)=0\}\cup\{(r,1)\;:\; p_n(r)=0\}.
\]
Note that, owing to Theorem \ref{th6.1}, $\{(r,1)\;:\; p_n(r)=0\}$ is the set of bifurcation points to $nT$-periodic solutions from the trivial curve $(A,1)$. Thanks to \eqref{6.2},  the algebraic multiplicity of J. Esquinas and J. L\'opez-G\'omez \cite{ELG87} equals one,
\[
  \chi[\mf{L}(n;A);r]=1 \in 2\N +1,
\]
for all $r \in p_n^{-1}(0)$, $r>0$. Thus, by \cite[Th. 5.6.2]{LG01},
the local degree at $(A,1)$ of the one-dimensional $\v_n(A,\cdot)$ changes as $A$ crosses $r$ (see also J. L\'opez-G\'omez and
C. Mora-Corral \cite[Pr. 12.3.1]{LGM07} if necessary). Therefore, according to \cite[Th. 6.2.1]{LG01},
for every integer $n\geq 2$ and each root $r>0$ of $p_n(A)$, there is a component  of $\mc{S}_n$, $\mf{C}_{n,r}$, such that
\[
  (r,1) \in \mf{C}_{n,r}\subset \R_+\times\R.
\]
Moreover, by the local uniqueness about  $(r,1)$ guaranteed by Theorem \ref{th6.1}, in a neighborhood of $(r,1)$ the component $\mf{C}_{n,r}$ consists of an analytic curve, $(A(s),1+s)$, $|s|<\e$. Note  that any real continuous map must be compact. So, the Leray--Schauder degree (see, e.g., N. G. Lloyd \cite{Llo}, or
 \cite[Ch. 12]{LGM07}, if necessary)  can be applied to get these global results. Alternatively, one might use the degree of P. Benevieri and M. Furi \cite{BF98}, as in Theorem 5.4 and Corollary 5.5 of \cite{LGM05} (see \cite{LG16} for a recent survey on global bifurcation theory).
\par
By Remark \ref{re3.4}, for every $r \in p_n^{-1}(0)\cap \R_+$, the component $\mf{C}_{n,r}$ must be separated away from $x=0$ and hence, all their solutions must be positive, because $\v_n(0)=-n$. Thus, they indeed provide us with coexistence states of \eqref{1.1}. Similarly, for every $n\geq 2$, since $\v_n(n)>0$, $\mf{C}_{n,r}$ is bounded above by $n$, in the sense that $x<n$ if $(A,x)\in \mf{C}_{n,r}$ with $A>0$. Therefore,
\begin{equation}
\label{6.11}
  \mc{P}_x(\mf{C}_{n,r})\subset (0,n),
\end{equation}
where $\mc{P}_x$ stands for the $x$-projection operator,  $\mc{P}_x(A,x):=x$. Moreover, due to \eqref{4.8}, $x=1$ is the unique zero of $\v_n(A,x)$ at $A=0$. Note that, due to Remark \ref{re6.2},  $(A,1)=(0,1)\notin \mf{C}_{n,r}$ because $p_n(0)=n>0$.
\par
Throughout the rest of this section, we will also consider the (unilateral) subcomponents
\begin{equation}
\label{6.12}
  \mf{C}_{n,r}^+:=\mf{C}_{n,r}\cap [x>1], \qquad \mf{C}_{n,r}^-:=\mf{C}_{n,r}\cap [x<1].
\end{equation}
Thanks to Theorem \ref{th6.1}, these subcomponents are non-empty. Moreover, arguing as in \cite[p. 182]{LG01}, it is easily seen that they equal the components $\mf{C}^+$ and $\mf{C}^-$ introduced on page \cite[p. 187]{LG01}. This feature heavily relies on the fact that $x$ is a one-dimensional variable.  Therefore, the unilateral theorem \cite[Th. 6.4.3]{LG01} can be applied to infer that each of the components $\mf{C}_{n,r}^+$ and $\mf{C}_{n,r}^-$ satisfies the \emph{global alternative} of P. H. Rabinowitz
\cite{Rab}, because the supplement of $N[\mf{L}(n;r)]=\R$ in $\R$ is $Y=[0]$ and, due to \eqref{6.11}, $\mf{C}_{n,r}$ cannot admit an element, $(A,x)$ with $x=0$. Therefore, $\mf{C}_{n,r}^+$ (resp.
$\mf{C}_{n,r}^-$) satisfies some of the following two conditions, which are far from being excluding:
\begin{enumerate}
\item[(a)] There exists $s \in p_n^{-1}(0)\setminus\{r\}$ (resp. $t \in p_n^{-1}(0)\setminus\{r\}$) such that $(s,1)\in \mf{C}_{n,r}^+$ (resp. $(t,1)\in \mf{C}_{n,r}^-$).
\item[(b)] The component $\mf{C}_{n,r}^+$ (resp. $\mf{C}_{n,r}^-$) is unbounded in $A$, because of \eqref{6.11}.
\end{enumerate}
Note that the counterexample of E. N. Dancer \cite{Dan} shows that Theorems 1.27 and 1.40 of P. H. Rabinowitz \cite{Rab} are not true as originally stated. To show that the second option occurs in both cases  we need the next result.

\begin{Lem}
\label{le6.4} Each of the unilateral subcomponents satisfies
\begin{equation}
\label{6.13}
  \mf{C}_{n,r}^\pm \cap \{(A,1)\;:\; A\geq 0\} = \{(r,1)\}.
\end{equation}
Thus, also
\begin{equation*}
  \mf{C}_{n,r} \cap \{(A,1)\;:\; A\geq 0\} = \{(r,1)\},
\end{equation*}
i.e., $(r,1)$ is the unique bifurcation point of $\mf{C}_{n,r}$ from $(A,1)$.
\end{Lem}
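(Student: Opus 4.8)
The plan is to establish \eqref{6.13} by contradiction, reducing it to the claim that no component of $\mc{S}_n$ can bifurcate from the trivial curve at two distinct roots of $p_n$; granted this, the displayed identity for $\mf{C}_{n,r}$ follows at once, since $\mf{C}_{n,r}=\mf{C}_{n,r}^+\cup\mf{C}_{n,r}^-$ and $(0,1)\notin\mc{S}_n$ because $p_n(0)=n\neq 0$ by Remark \ref{re6.2}. Throughout I would work with the real-analytic function $G(A,x):=\v_n(A,x)/(x-1)$, which extends analytically across $x=1$ with $G(A,1)=p_n(A)$, and for which $\mc{S}_n=\{(A,x)\in\R_+\times\R_+\,:\,G(A,x)=0\}$. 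At every root $r$ of $p_n$ one has $\p_A G(r,1)=p_n'(r)\neq 0$ by Theorem \ref{th5.2}, so $(r,1)$ is a regular point of $\{G=0\}$; hence $\mc{S}_n$ is locally a single analytic arc through $(r,1)$, with $(r,1)$ interior to it, and this arc is precisely the Crandall--Rabinowitz curve of Theorem \ref{th6.1}.

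First I would assume, for contradiction, that $(s,1)\in\mf{C}_{n,r}^+$ with $s\neq r$. Since $(s,1)\in\mc{S}_n$ lies on the trivial curve, $p_n(s)=0$; and since $(s,1)$ is a limit of non-trivial zeros of $\v_n$ with $x>1$, a sub-arc of the local curve at $(s,1)$ belongs to $\mf{C}_{n,r}$, whence $\mf{C}_{n,r}=\mf{C}_{n,s}$: a single component bifurcates both from $(r,1)$ and from $(s,1)$. The key step is then to attach to every non-trivial coexistence state an integer invariant that is locally constant along $\mc{S}_n$. Given $(A,x)\in\mc{S}_n$ with $x\neq 1$, let $(u,v)$ be the $nT$-periodic coexistence state of \eqref{1.1} with $(u(0),v(0))=(x,x)$, and set
\[
  \mc{W}(A,x):=\ \hbox{winding number about }(1,1)\ \hbox{of the closed loop}\ t\mapsto(u(t),v(t)),\quad t\in[0,nT].
\]
This is well defined because the loop cannot meet $(1,1)$: a solution touching the $T$-periodic state $(1,1)$ is identically $(1,1)$, forcing $x=1$. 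By continuous dependence on initial data and parameters, $\mc{W}$ is locally constant on $\mc{S}_n\cap[x\neq 1]$; moreover, because $\a\b=0$ with $\a,\b\geq 0$, along any solution $u$ increases when $v<1$ and decreases when $v>1$ while $v$ increases when $u>1$ and decreases when $u<1$, so the loop turns monotonically about $(1,1)$ and $\mc{W}\geq 1$.

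Next I would compute $\mc{W}$ near a bifurcation point by linearization. The linearization of \eqref{1.1} at $(1,1)$ is a $T$-periodic planar system whose monodromy matrix is $D\mc{P}_1(1,1)=\bigl(\begin{smallmatrix}1&-A\\ A&1-A^2\end{smallmatrix}\bigr)\in\mathrm{SL}(2,\R)$, of trace $2-A^2$; hence for $0<A<2$ the point $(1,1)$ is an elliptic fixed point of $\mc{P}_1$ that rotates by $\t(A)\in(0,\pi)$ with $\cos\t(A)=1-\tfrac{A^2}{2}$, and it is a resonant fixed point of $\mc{P}_n$ precisely when $n\t(A)\in 2\pi\Z$, in which case the closed linearized $nT$-loop has winding number $n\t(A)/(2\pi)$. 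Using $\sum_{j=0}^{n-1}M^j=(M^n-I)(M-I)^{-1}$ with $M=D\mc{P}_1(1,1)$, together with the root count of Theorem \ref{th5.2}, one checks that the positive roots of $p_n$ are exactly these resonant values, i.e.\ $r_k=2\sin(\pi k/n)$ for $k=1,\dots,\lfloor n/2\rfloor$ (with $r_{n/2}=2$ when $n$ is even, consistent with Lemma \ref{le4.7}); and since $\t$ is strictly increasing on $(0,2)$, distinct roots of $p_n$ carry distinct integers $k$. A rescaling by $(x-1)^{-1}$ identifies the bifurcating loops at $r_k$, in the limit $x\to 1$, with the closed linearized loop at $A=r_k$, of winding number $k$; hence $\mc{W}\equiv k$ near $(r_k,1)$ on both half-arcs of $\mc{S}_n\cap[x\neq 1]$. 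Writing $r=r_k$ and $s=r_j$, local constancy of $\mc{W}$ on $\mc{S}_n\cap[x\neq 1]$ — propagated across each trivial point $(r_\ell,1)$ of $\mf{C}_{n,r}$, at which both issuing half-arcs carry the value $\ell$ — forces $\mc{W}$ to be constant on the connected set $\mf{C}_{n,r}$ minus its finite trace on $\{x=1\}$. Therefore $k=j$, i.e.\ $s=r$, a contradiction, and \eqref{6.13} follows.

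I expect the main obstacle to be the identification of $\mc{W}$ near $(r_k,1)$ with the integer $k$. This needs, on one hand, matching the resonant values of $A$ with the positive roots of $p_n$ (using the matrix identity above and the root count of Theorem \ref{th5.2}) and, on the other, a careful rescaling argument showing that for $x$ close to $1$ the winding number of the genuinely nonlinear $nT$-periodic loop about $(1,1)$ equals that of the closed linearized loop at the resonance $A=r_k$, namely $\tfrac{n\t(r_k)}{2\pi}=k$ (which in turn rests on the elliptic structure of $(1,1)$ for $0<A<2$ and on the explicit "L-shaped" linearized flow produced by $\a\b=0$). The remaining ingredients --- the local analytic structure of $\mc{S}_n$, the well-definedness, local constancy and lower bound $\mc{W}\geq 1$, and the connectedness argument --- are routine.
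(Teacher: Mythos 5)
Your route is genuinely different from the paper's. The paper argues by contradiction: if $\mf{C}_{n,r}^+$ met the trivial line at two roots of $p_n$, the resulting ``arch'' would, by the separation property of Theorem \ref{th5.2}, trap a component $\mf{C}_{n-1,\cdot}^+$ emanating from an intermediate root of $p_{n-1}$; since components of $nT$- and $(n-1)T$-periodic solutions cannot meet, that trapped component is bounded, so by the Rabinowitz-type global alternative it must itself link two roots of $p_{n-1}$, and a finite descent through $p_{n-2},\dots$ produces a component with no root left to link, a contradiction. You instead attach to every non-trivial point of $\mc{S}_n$ the winding number $\mc{W}$ of the $nT$-loop about $(1,1)$ and argue that it is locally constant and takes the value $k$ near the root $r_k=2\sin(k\pi/n)$, so a component cannot join two distinct roots. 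Your identification of the positive roots of $p_n$ with the resonances of the linearization is correct and your justification is sound: $p_n(A)=e_1^{\top}\bigl(\sum_{j=0}^{n-1}M^j\bigr)(1,1)^{\top}$ vanishes at every resonance because $\sum_{j=0}^{n-1}M^j=(M^n-I)(M-I)^{-1}=0$ there, and the count of resonances in $(0,2]$, namely $\lfloor n/2\rfloor$, matches the root count of Theorem \ref{th5.2}. The connectedness bookkeeping (local constancy of $\mc{W}$, propagation across the finitely many trivial points using the Crandall--Rabinowitz local uniqueness) is also fine. If completed, your argument buys more than the paper's: explicit bifurcation values and a rotation-number label distinguishing the components, without invoking the incommensurability of periods.

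The genuine gap is the step you yourself flag: the assertion that the closed linearized $nT$-loop at the resonance $A=r_k$, and hence $\mc{W}$ on both half-arcs near $(r_k,1)$, has winding number exactly $k=n\theta(r_k)/(2\pi)$. This is not automatic, because the monodromy matrix determines $\theta(A)$ only modulo $2\pi$, and conjugating $M$ to a rotation distorts angles, so the actual angle swept by a solution over $[0,nT]$ could a priori differ from $n\theta(r_k)$ by a nonzero multiple of $2\pi$; your per-period monotonicity only yields $\mc{W}\in\{1,\dots,n-1\}$, which does not separate the roots. The claim is true, but it needs an argument such as the following Pr\"{u}fer-type one: the angle $\phi=\arg\bigl((u,v)-(1,1)\bigr)$ of the linearized flow obeys the self-contained scalar equation $\phi'=\b(t)\cos^2\phi+\a(t)\sin^2\phi\ge 0$, whose time-$T$ advance map is a lift of the circle map induced by $M$ with advance in $(0,2\pi)$ (each half-period contributes less than $\pi$, since $\phi$ cannot cross the equilibria $\phi\equiv 0\ (\mathrm{mod}\ \pi)$ resp.\ $\phi\equiv \pi/2\ (\mathrm{mod}\ \pi)$); the rotation number of this lift therefore equals the representative $\theta(A)=2\arcsin(A/2)\in(0,\pi]$, and at resonance the $n$-th iterate is a lift of the identity, hence equal to $\mathrm{id}+2\pi k'$ with $2\pi k'=n\theta(r_k)=2\pi k$, giving $\mc{W}=k$. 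Without some such computation your invariant is not pinned to the root, and the proof as written does not close; with it, the remaining ingredients (smooth dependence for the rescaling, coordinate invariance of the winding number, the openness/closedness argument on $\mf{C}_{n,r}$) are routine, as you say.
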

\begin{proof} Subsequently, we will denote by $\nu(n)$ the total number of positive roots of the polynomial $p_n(A)$. By Theorem \ref{5.2}, we already know that $\nu(n)=\frac{n}{2}$ if $n$ is even and $\nu(n)=\frac{n-1}{2}$ if $n$ is odd. We will prove the result only for $\mf{C}_{n,r}^+$, as the same argument also works out to prove the corresponding assertion for the component $\mf{C}_{n,r}^-$. The proof will proceed by contradiction. We already know that $\mf{C}_{n,r}^+$ can only meet the trivial solution $(A,1)$ at the roots of $p_n(A)$. Suppose that $r=r_{n,i}$ for some $i \in \{1,...,\nu(n)\}$, and that there exists $j>i$, $j \in \{1,...,\nu(n)\}$, such that
\begin{equation}
\label{6.14}
  \{(r_{n,i},1),(r_{n,j},1)\} \subset \mf{C}_{n,r}^+ \cap \{(A,1)\;:\; A\geq 0\}.
\end{equation}
Then, by the definition of component, it becomes apparent that
\begin{equation}
\label{6.15}
  \mf{C}_{n,r_{n,i}}^+=\mf{C}_{n,r_{n,j}}^+
\end{equation}
as sketched  by Figure \ref{Fig8}. Thanks to Theorem \ref{th5.2},
there exists
\[
  r_{n-1,k}\in (r_{n,i},r_{n,j})\cap p_{n-1}^{-1}(0).
\]
By the incommensurability of $nT$ with $(n-1)T$, $\mf{C}_{n-1,r_{n-1,k}}^+$ cannot reach the component
\eqref{6.15}. Thus, must be bounded. Consequently, as $\mf{C}_{n-1,r_{n-1,k}}^+$  also satisfies the global alternative of P. H. Rabinowitz, there exists $r_{n-1,\ell}\in p_{n-1}^{-1}(0)$, with $k\neq \ell$, such that
\[
  \{(r_{n-1,k},1),(r_{n-1,\ell},1)\} \subset \mf{C}_{n-1,r_{n-1,k}}^+ \cap \{(A,1)\;:\; A\geq 0\},
\]
as sketched in Figure \ref{Fig8}. Since the set of roots
\[
  \bigcup_{2\leq \kappa \leq n} p_{\kappa}^{-1}(0)
\]
is finite, it becomes apparent that, after finite many steps, there exists a component,  $\mf{C}_{n-h,r_{n-h,m}}^+$, for some $2 \leq h \leq n-3$ and $1\leq m\leq \nu(n-h)$, that should meet
the last component  linking two different roots sketched in Figure \ref{Fig8},
\[
  \mf{C}_{n-h+1,r_{n-h+1,v}}^+ = \mf{C}_{n-h+1,r_{n-h+1,w}}^+,
\]
because there is no any additional root of $p_{n-h}(A)$ in between $r_{n-h+1,v}$ and $r_{n-h+1,w}$.
But this is impossible, by the incommensurability of $(n-h)T$ with $(n-h+1)T$. This contradiction
ends the proof.
\end{proof}

\begin{figure}[h!]
	\centering
	\includegraphics[scale=1.2]{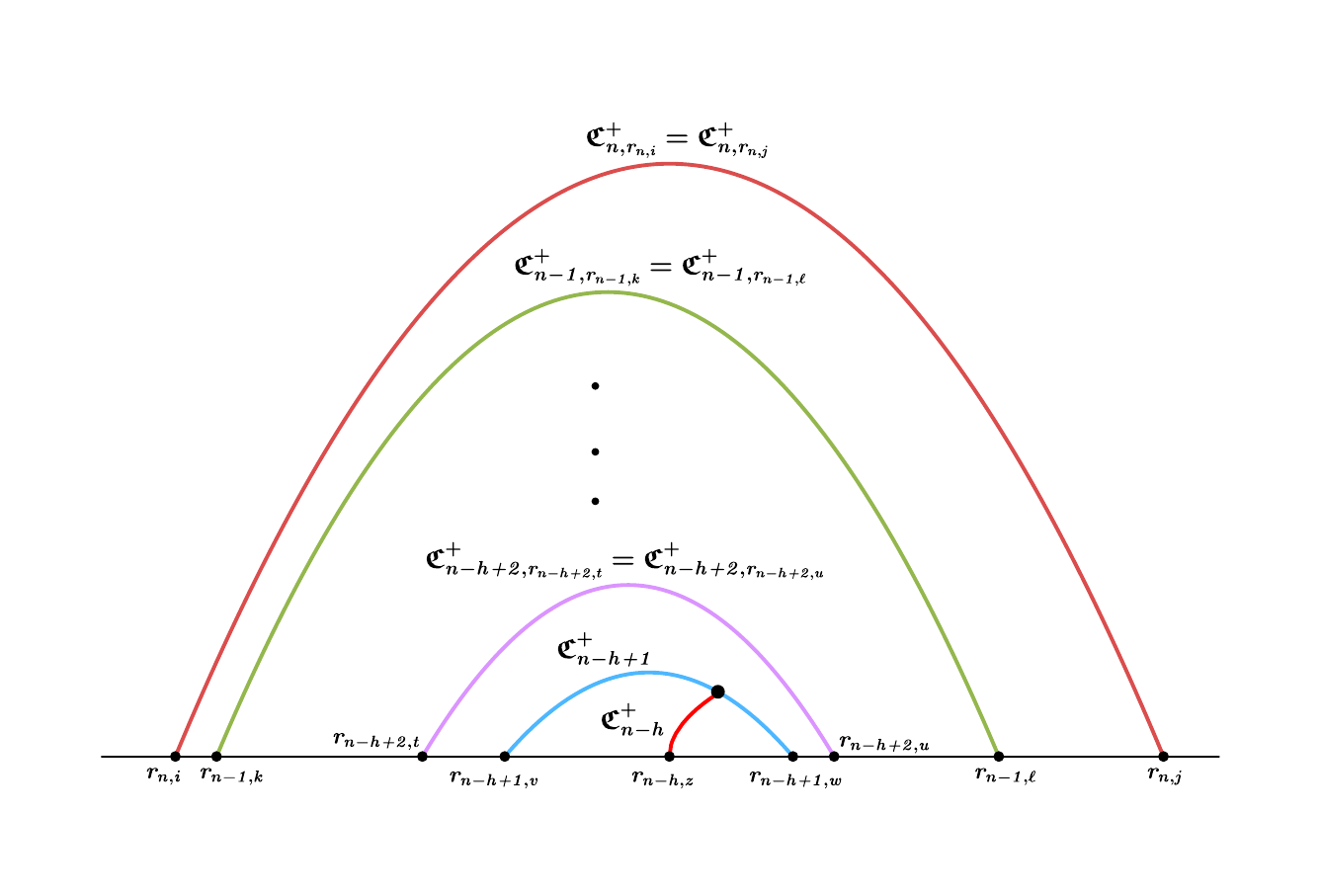}
	\caption{Sketch of the proof of Lemma \ref{6.3}.}
	\label{Fig8}
\end{figure}

As an immediate consequence of the previous analysis, the next result holds. As for the $x$-projection operator, $\mc{P}_x$, we will denote by $\mc{P}_A$ the $A$-projection operator,
\[
  \mc{P}_A(A,x):=A.
\]

\begin{Thm}
\label{th6.5}
For every integer $n\geq 2$ and each root $r>0$ of $p_n(A)$, the component $\mf{C}_{n,r}^+$ satisfies
\begin{enumerate}
\item[{\rm (a)}] $\mc{P}_x (\mf{C}_{n,r}^+)\subset [1,n)$;
\item[{\rm (b)}] $\mc{P}_A (\mf{C}_{n,r}^+)=[A^+_{n,r},+\infty)$ for some $A_{n,r}^+ \in (0,r]$. In particular, $\mf{C}_{n,r}^+$ is unbounded.
\item[{\rm (c)}] $\mf{C}_{n,r}^+ \cap \{(A,1)\;:\; A\geq 0\} = \{(r,1)\}$;
\item[{\rm (d)}] For every $n, m \geq 2$, $\mf{C}_{n,r}^+\cap \mf{C}_{m,s}^+=\emptyset$ if $r \neq s$.
\end{enumerate}
Moreover, by Theorem \ref{th6.1}, in a neighborhood of $(r,1)$ the component $\mf{C}_{n,r}^+$ consists of an analytic curve, $(A(s),1+s)$, $0\leq s <\e$.  Similarly,
the component $\mf{C}_{n,r}^-$ satisfies {\rm (c)}, {\rm (d)} and
\begin{enumerate}
\item[{\rm (A)}] $\mc{P}_x (\mf{C}_{n,r}^-)\subset (0,1]$;
\item[{\rm (B)}] $\mc{P}_A (\mf{C}_{n,r}^-)=[A^-_{n,r},+\infty)$ for some $A_{n,r}^- \in (0,r]$. In particular, $\mf{C}_{n,r}^-$ is unbounded.
\end{enumerate}
Analogously,  in a neighborhood of $(r,1)$ the component $\mf{C}_{n,r}^-$ consists of an analytic curve, $(A(s),1+s)$, $-\e<s\leq 0$.
\end{Thm}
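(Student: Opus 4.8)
The plan is to read off all of (a)--(d), (A), (B) from Rabinowitz's global alternative, which has already been set up above for the unilateral subcomponents $\mf{C}_{n,r}^{\pm}$, combined with Lemma \ref{le6.4} and the a priori bounds recorded in \eqref{6.11}. Part (c) is literally Lemma \ref{le6.4}. For the $x$-projection bounds (a) and (A) I would just combine two facts: by the definition \eqref{6.12} every point of $\mf{C}_{n,r}^{+}$ has $x\geq 1$ and every point of $\mf{C}_{n,r}^{-}$ has $x\leq 1$; and by \eqref{6.11}, which rests on $\v_n(0)=-n<0$ together with the representation $\v_n(x)=x(1+E_1(x)+E_2(x)+\cdots)-n$ coming from \eqref{3.16}--\eqref{3.17} (all $E_j>0$, whence any zero satisfies $x<n$), the nontrivial solutions are trapped in the strip $0<x<n$. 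Hence $\mc{P}_x(\mf{C}_{n,r}^{+})\subset[1,n)$ and $\mc{P}_x(\mf{C}_{n,r}^{-})\subset(0,1]$.

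For (b) and (B) the first step is to discard option (a) of Rabinowitz's alternative by means of Lemma \ref{le6.4}: since $\mf{C}_{n,r}^{\pm}\cap\{(A,1):A\geq 0\}=\{(r,1)\}$, the subcomponent cannot meet the trivial curve at a second root of $p_n$, so it must be unbounded in $A$. As its $x$-coordinate lies in a bounded set, this forces $\mc{P}_A(\mf{C}_{n,r}^{+})$ to be unbounded above; being the continuous image of a connected set it is an interval, and it contains $r$ because $(r,1)\in\mf{C}_{n,r}^{+}$. To see that the infimum $A_{n,r}^{+}$ is strictly positive I would argue that $\mf{C}_{n,r}^{+}$ stays away from $(0,1)$: if $(A_k,x_k)\in\mf{C}_{n,r}^{+}$ with $A_k\to 0$, then after extracting a subsequence $x_k\to x_{\ast}\in[1,n]$ and by continuity $\v_n(0,x_{\ast})=0$, so $x_{\ast}=1$ by \eqref{4.8}; but a Taylor expansion at $x=1$ (or the Implicit Function Theorem), using $\p_x\v_n(0,1)=p_n(0)=n\neq 0$ from Lemma \ref{le4.4}, shows there is no nontrivial zero of $\v_n$ near $(0,1)$, a contradiction as soon as $(A_k,x_k)$ enters that neighbourhood. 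Finally, a minimizing sequence $(A_k,x_k)$ with $A_k\downarrow A_{n,r}^{+}$ has $x_k\in[1,n)$, hence a convergent subsequence $x_k\to x_{\ast}$; since $\v_n(A_{n,r}^{+},n)>0$ (Remark \ref{re3.4}) excludes $x_{\ast}=n$, closedness of $\mf{C}_{n,r}^{+}$ gives $(A_{n,r}^{+},x_{\ast})\in\mf{C}_{n,r}^{+}$, so the infimum is attained: $\mc{P}_A(\mf{C}_{n,r}^{+})=[A_{n,r}^{+},+\infty)$ with $0<A_{n,r}^{+}\leq r$. The same reasoning applies verbatim to $\mf{C}_{n,r}^{-}$, yielding (A) and (B).

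For the disjointness statement (d) I would argue by contradiction. If $n=m$ and $r\neq s$, then $\mf{C}_{n,r}^{+}$ and $\mf{C}_{n,s}^{+}$ are connected subcomponents of the very same solution set, so a common point would force $\mf{C}_{n,r}^{+}=\mf{C}_{n,s}^{+}$, and then $(r,1)$ and $(s,1)$ would both lie on it, against Lemma \ref{le6.4}. If $n\neq m$, set $L:=\operatorname{lcm}(n,m)$. Since $n\mid L$ and $m\mid L$, \eqref{4.1} yields $\mc{S}_n\cup\mc{S}_m\subset\mc{S}_L$, and by Remark \ref{re6.2} $p_n\mid p_L$ and $p_m\mid p_L$, so $r,s$ are roots of $p_L$ and $(r,1),(s,1)$ are bifurcation points of $\mc{S}_L$. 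Being connected and contained in $\mc{S}_L\cap[x\geq 1]$, the subcomponent $\mf{C}_{n,r}^{+}$ must lie inside the unilateral subcomponent of $\mc{S}_L$ issuing from $(r,1)$, i.e. $\mf{C}_{n,r}^{+}\subset\mf{C}_{L,r}^{+}$; likewise $\mf{C}_{m,s}^{+}\subset\mf{C}_{L,s}^{+}$. Hence a point of $\mf{C}_{n,r}^{+}\cap\mf{C}_{m,s}^{+}$ would give $\mf{C}_{L,r}^{+}=\mf{C}_{L,s}^{+}$ and therefore $\{(r,1),(s,1)\}\subset\mf{C}_{L,r}^{+}$, once more contradicting Lemma \ref{le6.4}, now at level $L$. (This absorbs the separate discussion of whether the common point is trivial or not: at level $L$ Lemma \ref{le6.4} already identifies the unique trivial point of $\mf{C}_{L,r}^{+}$.) The same argument proves (d) for the subcomponents $\mf{C}^{-}$.

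The local analytic structure near $(r,1)$ -- that $\mf{C}_{n,r}^{+}$ coincides there with $\{(A(s),1+s):0\leq s<\e\}$ and $\mf{C}_{n,r}^{-}$ with $\{(A(s),1+s):-\e<s\leq 0\}$ -- is immediate from Theorem \ref{th6.1} and its local uniqueness clause. I expect the only delicate point to be the one already faced in Lemma \ref{le6.4}: making rigorous that Rabinowitz's unilateral alternative genuinely applies to $\mf{C}_{n,r}^{\pm}$, which hinges on the identification of these sets with the components $\mf{C}^{\pm}$ of \cite{LG01} and on the supplement being $Y=[0]$; granted Lemma \ref{le6.4}, the $\operatorname{lcm}$ reduction makes (d) short, while (a)--(c), (A) and (B) reduce to bookkeeping on top of the a priori bounds $0<x<n$ and $\v_n(A,1)=0$.
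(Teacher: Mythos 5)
Your proposal is correct and follows essentially the same route as the paper, which likewise reads (a)--(c), (A), (B) off the a priori bounds \eqref{6.11}, Remark \ref{re3.4}, the unilateral global alternative and Lemma \ref{le6.4}, and proves only the ``delicate'' part (d) by exactly your contradiction with Lemma \ref{le6.4}. Your extra care --- the $\operatorname{lcm}$ reduction for $n\neq m$ (where the paper just says ``by the definition of component'') and the argument that the $A$-projection is a closed half-line with positive left endpoint --- simply fills in details the paper leaves implicit.
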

\begin{proof}
At this stage, the only delicate point is Part (d). Suppose that
$\mf{C}_{n,r}^+\cap \mf{C}_{m,s}^+\neq \emptyset$ for some $r \neq s$. Then, by the definition of component, necessarily
\[
  \mf{C}_{n,r}^+ = \mf{C}_{m,s}^+.
\]
Thus, $(r,1), (s,1) \in \mf{C}_{n,r}^+$, which contradicts Lemma \ref{le6.4}. The proof is complete.
\end{proof}

Except for the local bifurcations from the trivial line $(A,1)$, the global diagramas of
the components $\mf{C}_{n,r}^\pm$ plotted in Figure \ref{Fig9} respect the general properties
established by Theorem \ref{th6.5}. Although the components have been plotted with no secondary bifurcations along them, there are some numerical evidences that $\mf{C}_{2,2}^-$ possesses a secondary bifurcation to $4T$-periodic solutions. Nevertheless, thanks to Theorem \ref{th6.5}, even in the case that they might occur higher order bifurcations along these components, they must be disjoint.
\par
According to Theorems \ref{th6.1} and \ref{th6.5}, it becomes apparent  that some $3T$ and $4T$-periodic solutions must be degenerated. Namely, those on the turning points of $\mf{C}_{3,\sqrt{3}}^+$, $\mf{C}_{4,\sqrt{2}}^+$ and $\mf{C}_{4,\sqrt{2}}^-$ in Figure \ref{Fig7}. Similarly, the bifurcation points accumulating from the left to $\sqrt{2}$ and  $\sqrt{3}$ must provide us with additional degenerate solutions: those on the turning points of
their corresponding components.

\end{document}